\crefname{equation}{}{}
\crefname{theo}{Theorem}{Theorems}
\crefname{coro}{Corollary}{Corollaries}
\crefname{prop}{Proposition}{Propositions}
\crefname{lemm}{Lemma}{Lemmas}
\crefname{exam}{Example}{Examples}
\crefname{assum}{Assumption}{Assumptions}
\newtheorem{theo}{Theorem}[section]
\newtheorem{lemm}[theo]{Lemma}
\newtheorem{prop}[theo]{Proposition}
\newtheorem{prob}[theo]{Problem}
\theoremstyle{definition}
\newtheorem{defi}[theo]{Definition}
\newtheorem{assum}[theo]{Assumption}
\newtheorem{exam}[theo]{Example}
\newtheorem{rema}[theo]{Remark}
\numberwithin{equation}{section}
\newcommand{\bR}{\mathbb R}
\newcommand{\bN}{\mathbb N}
\newcommand{\bE}{\mathbb E}
\newcommand{\bP}{\mathbb P}
\newcommand{\bF}{\mathbb F}
\newcommand{\1}{\mathbbm{1}}
\newcommand{\bbm}{\mathbbm{m}}
\newcommand{\bI}{\mathbb I}
\newcommand{\bS}{\mathbb S}
\newcommand{\bV}{\mathbb V}
\newcommand{\bfD}{\mathbf D}
\newcommand{\bfL}{\mathbf L}
\newcommand{\bmZ}{\mathcal Z}
\newcommand{\bmC}{\bm{C}}
\newcommand{\bme}{\bm{\mathrm{e}}}
\newcommand{\cA}{\mathcal A}
\newcommand{\cB}{\mathcal B}
\newcommand{\cE}{\mathcal E}
\newcommand{\cF}{\mathcal F}
\newcommand{\cG}{\mathcal G}
\newcommand{\cL}{\mathcal L}
\newcommand{\cN}{\mathcal N}
\newcommand{\cS}{\mathcal S}
\newcommand{\cP}{\mathcal P}
\newcommand{\cO}{\mathcal O}
\newcommand{\cW}{\mathcal W}
\newcommand{\scrD}{\nabla}
\newcommand{\scrM}{\mathscr M}
\newcommand{\scrS}{\mathscr S}
\newcommand{\rmG}{\mathrm{G}}
\newcommand{\diag}{\mathrm{diag}}
\newcommand{\Tr}{\mathbf{tr}}
\newcommand{\vect}{\mathrm{vec}}
\newcommand{\ent}{\mathsf{Ent}}
\newcommand{\opt}{\mathrm{opt}}
\newcommand{\RO}{E}
\newcommand{\wt}{\widetilde}
\newcommand{\pr}{\otimes}
\newcommand{\tran}{\mathsf{T}}
\newcommand{\ep}{\varepsilon}
\newcommand{\e}{\mathrm{e}}
\newcommand{\supp}{\mathrm{supp}}
\newcommand{\Leb}{\bm{\lambda}}
\newcommand{\pd}{\partial}
\newcommand{\im}{\mathrm{i}}
\newcommand{\od}{\mathrm{d}}
\newcommand{\nq}{\!\!}
\def\b{\big}
\def\B{\Big}
\def\bb{\bigg}
\def\BB{\Bigg}
\def\<{\left<}
\def\>{\right>}
\begin{document}

\title[Entropy-Regularized Portfolio Optimization with Jumps]{Entropy-Regularized Mean-Variance Portfolio Optimization with Jumps}

\author[Christian Bender]{Christian Bender$^{1}$}
\address{$^1$\rm{Department of Mathematics, Saarland University, Germany}}
\email{bender@math.uni-saarland.de}

\author[Nguyen Tran Thuan]{Nguyen Tran Thuan$^{1,2}$}
\email{nguyen@math.uni-saarland.de}
\address{$^2$\rm{Department of Mathematics, Vinh University, 182 Le Duan, Vinh, Nghe An, Viet Nam}}
\email{thuan.tr.nguyen@gmail.com}

\thanks{\textit{The second author dedicates this article to Professor Tr\!\`\!\!\^an L\^{\!\!\!\d{o}}c H\`ung for his inspiring life story.} \\[0.2cm] The first preprint version of this manuscript has been made available in December 2023. To the best of our knowledge, it is the first paper discussing the exploratory control problem for reinforcement learning in continuous time with controlled jumps. In the meantime different aspects of reinforcement learning for jump diffusions have been addressed in \cite{GLZ24} and in our recent work \cite{BT24}.}

\date{\today}

\begin{abstract}
Motivated by the trade-off between exploitation and exploration in reinforcement learning, we study a continuous-time entropy-regularized mean-variance portfolio selection problem in the presence of jumps. We propose an exploratory SDE for the wealth process associated with multiple risky assets which exhibit L\'evy jumps. In contrast to the existing literature, we study the limiting behavior of the natural discrete-time formulation of the wealth process associated with a randomized control in order to derive the continuous-time dynamics.  We then show that an optimal distributional control of the continuous-time entropy-regularized exploratory mean-variance problem is still Gaussian despite being in jump models. Moreover, the respective optimal wealth process solves a linear SDE whose representation is explicitly obtained.

\bigskip

\noindent \textbf{2020 Mathematics Subject Classification.} Primary: 93E20, 60H30; Secondary: 60F05, 60G51.

\medskip

\noindent \textbf{Keywords.} Entropy regularization,  L\'evy process, mean-variance portfolio optimization, reinforcement learning, weak convergence.
\end{abstract}

\maketitle


\section{Introduction}

\subsection{The problem} The mean-variance (MV) portfolio optimization problem pioneered by Markowitz \cite{Ma52} is one of the most popular  criteria in the portfolio selection theory due to its simple and natural formulation in dealing with the two  important aspects of investment, namely, risk and return. In the MV model, investors aim to minimize the variance, which quantifies the risk, of the terminal wealth of their portfolios while targeting a prespecified expected value of the terminal wealth. This criterion therefore effectively reflects a trade-off between the risk and expected return in an intuitive way. After Markowitz's foundational works, the MV approach has attracted considerable attention with numerous extensions and applications. For example, among other works in the continuous-time setting when the financial market is driven by a multidimensional Brownian motion, Zhou and Li \cite{ZL00} investigate the MV problem in terms of stochastic linear-quadratic (LQ) optimization using an embedding method. After that, Li et al. \cite{LZL02} introduce the Lagrange multiplier method to transform the MV problem to an unconstrained stochastic LQ control problem so that standard techniques are applicable. As the literature on the MV criterion is vast, we refer the reader to \cite{ZLG18} for a review on this topic.

The classical model-based MV problem, where model assumptions are predescribed, has been fairly well investigated and quite completely solved in various settings with analytical solutions. To apply these results in practice, one usually needs to estimate model parameters based on historical data of the underlying asset prices accumulated during trading. Nevertheless, it is widely acknowledged that it is difficult to estimate those parameters with an applicable accuracy, and furthermore, classical optimal MV strategies frequently exhibit high sensitivity to those parameters which then might become practically sub-optimal due to estimation error.

In recent years, reinforcement learning (RL) methods, which have increasingly attracted more attention in quantitative finance, become a promising approach to overcome those practical difficulties.  By and large, RL algorithms iteratively execute randomized controls for some period (or, episode) and apply the data which has been collected over the previous periods to update the unknown model parameters and the randomized control, see, e.g.,  \cite{JZ22, JZ23,STZ23} for RL algorithms in a continuous-time stochastic control setting. The randomization of the controls reflects the trade-off between  exploration (learning the unknown investment environment) and exploitation (optimizing adaptively to the updated model parameters). Thus, RL algorithms can produce (nearly) optimal solutions without the need of statistically estimating the model parameters beforehand. The reader is referred to \cite{HXY23} for an overview to recent developments and applications of RL in finance.

The iterative construction of the randomized controls in the algorithms mentioned above relies on an entropy-regularized formulation of the stochastic control problem. Here, the entropy regularization rewards exploration and leads to the optimality of distribution-valued (or, relaxed) controls. Recently, Wang and Zhou \cite{WZ20} introduced such an entropy-regularized exploratory SDE framework for the MV problem in a Black--Scholes environment.  To be more precise and for easier explanation, let us introduce some notations. Let $T>0$ be a fixed finite time horizon and $W = (W_t)_{t \in [0, T]}$ a standard $1$-dimensional Brownian motion. The exploratory SDE for the wealth process $X^\pi = (X^\pi_t)_{t \in [0, T]}$ under an admissible control $\pi = (\pi_t)_{t \in [0, T]}$, which is a \textit{distribution-valued} stochastic process and where $\pi_t$ is the probability density function of the exploration law at time $t$, is heuristically derived and has the following form
\begin{align}\label{eq:intro:exploratory-SDE-WZ20}
	\od X^{\pi}_t =\mu_t b \od t + \sqrt{\mu_t^2 + \sigma_t^2} \, a\od W_t.
\end{align}
Here, the drift $b \in \bR$ and volatility $a>0$ are unknown constants, $\mu_t: = \int_{\bR} u \pi_t(u) \od u$ represents the mean and $\sigma_t^2: = \int_{\bR} u^2 \pi_t(u) \od u - \mu_t^2$ the variance of the distribution of exploration at time $t$. We refer to \cite{WZ20, WZZ20} for the motivation and derivation of \eqref{eq:intro:exploratory-SDE-WZ20}. To encourage and quantify the exploration process, Wang and Zhou \cite{WZ20} incorporate a \textit{differential entropy} term to the objective function and the classical MV problem then becomes an \textit{entropy-regularized exploratory MV problem}. The authors then prove that the optimal feedback distributional control is Gaussian with time-decaying variance. Moreover, via a simulation study it is also illustrated in \cite{WZ20} that the RL approach for solving the MV problem significantly improves some other methods such as the traditional maximum likelihood estimate (MLE) and the deep deterministic policy gradient (DDPG). The approach  as in \cite{WZ20, WZZ20} has been extended in various contexts, see, e.g., \cite{DDJ23, GXZ22, WL24}.

It is, however, widely acknowledged that models with jumps are more appropriate to describe the fluctuation of asset prices, see, e.g., \cite{AJ14, CT03}. Following this direction, many researchers have extensively studied the classical MV problem and its variants in several jump models, see, e.g., \cite{JMSS12, Li05, OS19} and the references therein. Then a question naturally arises: \textit{How would the continuous-time entropy-regularized exploratory MV problem and its solutions be like if the asset prices exhibit jumps?} To address this question, one first needs to describe the exploratory SDE with jumps for the respective wealth process.

In contrast to the models built upon the Brownian framework by Wang and Zhou \cite{WZ20} and by Wang et al. \cite{WZZ20}, where the exploratory SDE for the wealth/controlled process can be heuristically inferred from knowing its first two conditional moments only, models with jumps are quite involved as, in general, one has to test against various other functions rather than the linear and quadratic functions to detect the distributional behavior of jumps. In fact, these test functions essentially depend on the jump activities of the underlying asset price process. Hence, the derivation for the exploratory SDE based on first two moments in \cite{WZ20, WZZ20} is seemingly not applicable for jump models, at least in a straightforward way. To deal with this problem, we exploit the linear dependence on controls of the wealth process and propose a different argument to derive the exploratory SDE.

\subsection{Our contributions and discussions}

Let $D \in \bN$ and assume that the log-price process of $D$ risky assets is a weak solution of an SDE driven by a $D$-dimensional L\'evy process $L$. Here $L$ includes, but not necessarily simultaneously, a Brownian motion $W$ and an independent pure-jump L\'evy process $J$, both are $D$-dimensional. Except the square integrability, there are no additional assumptions imposed on the L\'evy measure.

\subsubsection{Continuous-time exploratory SDE with L\'evy jumps} To derive an exploratory SDE for the wealth process, we begin with a discrete-time dynamic of the wealth under an exploration procedure, see \cref{sec:exploration-procedure}. In \cite{WZ20, WZZ20}, the authors  first average out realizations of distributional controls on each discrete-time sub-interval using a law of large numbers, and then combine them all together to infer the dynamic on entire $[0, T]$. Here, unlike the argument in \cite{WZ20, WZZ20}, we first explicitly model randomized controls on discrete-time partitions  of  $[0, T]$ and identify a family of discrete-time integrators which incorporate the additional  ``exploration noise''.
 To do that, we need to handle the additional randomness caused by exploration differently for the Brownian and for the jump component which can be roughly described as follows:

\begin{itemize}
	\itemsep2pt
	\item For the Brownian part, thanks to the linear structure with respect to the control, one can (partially) separate the original randomness caused by the asset prices and the  randomness caused by exploration in an appropriate way, see \cref{discrete-BM-part}. 
	
	\item For the jump component, we employ a suitable $D^2$-dimensional random measure to simultaneously capture both sources of randomness, see \cref{discrete-jump-part}.
\end{itemize}
 Then, by refining the discrete time points, we show in \cref{thm:limit-distribution-integrator} below that the stochastic \textit{integrators} of our discrete-time scheme converge in distribution to a multidimensional L\'evy process. This limit theorem gives rise to a natural continuous-time formulation of the exploratory control problem with entropy regularization. Note that randomized controls on discrete-time grids have recently been considered in \cite{DJ24, STZ23} for diffusion models. However, \cite[Theorem 2.2]{STZ23} investigates the limiting behavior of the cost of such controls and \cite[Lemma 4]{DJ24} describes the convergence of the optimal control density, while we apply this discretization to infer the structure of the continuous-time ``exploration noise''.  
 
 We also remark that the heuristic passage to the limit in the existing literature \cite{WZ20, WZZ20} only yields information about the conditional mean and covariance of the continuous-time controlled system. It, thus, allows for many different SDE representations, even in the case of no jumps, as discussed below. In contrast, our derivation identifies a specific SDE formulation, which we consider a natural choice for modeling exploration in the continuous-time framework. Indeed, as discussed in \cref{sec:sample_state_process} below, our formulation of the exploratory SDE, which is derived from discrete-time randomized controls, is closely related to the \emph{sample state process}, which is a key object for the design of learning algorithms in the  recent continuous-time RL literature \cite{JZ22,JZ23}. It can even be interpreted as a mathematically rigorous reformulation of this sample state process which avoids the use of an independent identically distributed sampling mechanism indexed by continuous time.

\subsubsection{Problem formulation in multidimensional setting}
We consider $D$ risky assets with jumps and derive the continuous-time dynamics of the wealth process with exploration, see SDE \eqref{eq:exploratory-SDE-jump} and \cref{rema:on-exploratory-SDE} for further discussion.

Let us compare our exploratory SDE with other works in the case of no jumps. Since we use a different argument, our exploratory SDE unsurprisingly takes a different form from \eqref{eq:intro:exploratory-SDE-WZ20} in \cite{WZ20}. If $D = 1$, then the dynamic of wealth under a distributional control $\pi$ in our setting particularly  becomes
\begin{align}\label{eq:intro:exploratory-SDE-BN23}
	\od X^{\pi}_t = \mu_t b\od t + \mu_t a\od W_t +  \sigma_t a\od \cW_t,
\end{align}
where $\cW$ is a $1$-dimensional Brownian motion independent of $W$. We notice that $X^\pi$ in \eqref{eq:intro:exploratory-SDE-WZ20} and in \eqref{eq:intro:exploratory-SDE-BN23} have the same distribution. However, differently from \eqref{eq:intro:exploratory-SDE-WZ20}, in our SDE \eqref{eq:intro:exploratory-SDE-BN23} the exploration randomness represented by $\cW$ is separated from the noise $W$ caused by asset prices. One also remarks that the SDE in form of \eqref{eq:intro:exploratory-SDE-BN23} has been recently considered in \cite{DDJ23, WL24}. Nevertheless, when $D >1$, the authors in \cite{DDJ23, WL24} use an additional $1$-dimensional Brownian motion to model the exploration (i.e. $\cW$ is $1$-dimensional), while, according to our analysis, it suggests to use a $D^2$-dimensional Brownian motion (i.e. $\cW$ is $D^2$-dimensional).

\subsubsection{Optimal distributional control and wealth process} Following \cite{WZ20}, we first use the Lagrange multiplier method to transform the exploratory MV problem to an entropy-regularized quadratic-loss control problem and then apply the dynamic programing principle to find its solutions. 

We show in \cref{thm:verification} that, despite the presence of jumps, among admissible distributional controls which are not necessarily in the feedback form,  an optimal Gaussian control in feedback form can be obtained. As a feature of our approach, the respective optimal wealth process satisfies a \textit{linear} SDE (see \eqref{eq:solution-wealth}) which allows us to find its expression in a closed-form (see \eqref{eq:explicit-solution-optimal-SDE} and \cite{BT23S}). As a consequence, the Lagrange multiplier is also explicitly obtained (see \eqref{eq:explicit-Lagrange-multiplier} and \cite{BT23S}). Moreover, the value function has a quadratic form with respect to the wealth variable whose coefficients are solutions to a system of partial integro-differential equations (PIDEs). In the special case of no jumps and $D = 1$ and with constant coefficients, our value function coincides, of course, with that in \cite{WZ20}, see \cref{exam:constant-coeff-D=1}.

\subsection{Structure of the article} 
In \cref{sec:preliminaries}, we introduce the notation and recall the classical MV problem. The derivation of the continuous-time exploratory SDE with L\'evy jumps is presented in \cref{sec:Exploratory-SDE-jump}. In \cref{sec:entropy-exploratory-MV}, we study the entropy-regularized exploratory MV problem, investigate its closed-form solutions, and discuss the Lagrange multipliers. \cref{sec:proof-weak-convergence} is devoted to present the proof of weak convergence of the discrete-time integrators (\cref{thm:limit-distribution-integrator}).

\section{Preliminaries}
\label{sec:preliminaries}
\subsection{Notations}

Let $D \in \bN: = \{1, 2,\ldots\}$. For $a, b \in \bR$, we use the usual notations  $a\wedge b : = \min\{a, b\}$ and $a\vee b: = \max\{a, b\}$. For $a<b$, let $\int_a^b : = \int_{(a, b]}$. Notation $\log$ indicates the natural logarithm. Sub-indexing a symbol by a label means the place where that symbol appears. We also use the conventions $\inf \emptyset : = \infty$ and $\sum_{i \in \emptyset} = \int_{\emptyset} : = 0$.

\subsubsection{Vector spaces and matrices}
Let $\|\cdot\|$ be the usual Euclidean norm and $(\bme_d)_{d=1}^D$ the natural basis in $\bR^D$. For $r>0$, we set $B_D(r): = \{x \in \bR^D : \|x\| < r\}$  and  $B_D^c(r): = \bR^D\backslash B_D(r)$.

All vectors are written in the column form. For a vector $x$ we use the
notation $x^{(i)}$ or $[x]^{(i)}$ to denote its $i$-th component. For a matrix $A$,
\begin{itemize}
	\item $A^{(i, j)}$ or $[A]^{(i, j)}$ is the element in the $i$-th row and $j$-th column of $A$;
	
	\item if $A$ is a $D\times D$ matrix, then $\Tr[A]$, $\det(A)$, $A^{-1}$ denote the trace, determinant and inverse of $A$ respectively. Let $\diag(A): = \diag(A^{(1,1)}, \ldots, A^{(D, D)})$ denote the diagonal matrix with diagonal entries $A^{(1,1)}, \ldots, A^{(D, D)}$;
	
	\item the usual Euclidean/Frobenius norm of $A$ is also denoted by $\|A\|$, i.e. $\|A\|  := \sqrt{\Tr[A^\tran A]}$.
\end{itemize}
 Notation $I_D$ means the $D\times D$ identity matrix. We also use the following classes of matrices:
\begin{itemize}
	\item $\bR^{D \times D'}$ denotes the family of all real matrices with size $D\times D'$;
	
	\item $\bS^D$ (resp. $\bS^D_+$, $\bS^D_{++}$) is the family of all symmetric (resp. positive semidefinite, positive definite) $A \in \bR^{D\times D}$. For $A \in \bS^D_+$, denote by $A^{\frac{1}{2}} \in \bS^D_+$ the unique square root of $A$, i.e. $A^{\frac{1}{2}} A^{\frac{1}{2}} = A$. If $A \in \bS^D_{++}$, then we let $A^{-\frac{1}{2}}: = (A^{\frac{1}{2}})^{-1}$.
	
	\item $\cO_D$ consists of all orthonormal $O \in \bR^{D\times D}$, i.e. $O^\tran O = I_D$.
\end{itemize}
For $A \in \bR^{D\times D'}$, denote by $\vect(A)$ the vectorization of $A$ defined  as an element of $\bR^{DD'}$ by  stacking the columns of $A$ on top of one another, i.e.
$$\vect(A) : =  (A^{(1,1)}, \ldots, A^{(D, 1)}, A^{(1,2)}, \ldots, A^{(D, 2)}, \ldots, A^{(1, D')}, \ldots, A^{(D, D')})^\tran.$$
For (column) vectors $x_1, \ldots, x_n$ with possibly different sizes, $\vect(x_1, \ldots, x_n)$ means the vector obtained by stacking $x_i$ on top of $x_{i+1}$, $1 \le i \le n-1$. 
To shorten notation at some places we also use the Kronecker product $\pr \colon \bR^D \times \bR^{D'} \to \bR^{DD'}$ defined by
$$x\pr y : = \vect(x^{(1)} y, \ldots, x^{(D)}y).$$
One notices that the operator $\pr$ is bilinear and $\|x \pr y\| = \|x\| \|y\|$.

\subsubsection{Function spaces}

For a function $f \colon \bR^D \to \bR$, we use the following notations:
\begin{itemize}
	\item $\|f\|_\infty: = \sup_{x \in \bR^D} |f(x)|$;
	
	\item $\pd f$ and $\pd^2 f$ denote usual partial derivatives of $f$ with respect to scalar components;
		
	\item $\scrD f$ and $\scrD^2 f$ denote the gradient and the Hessian of $f$ respectively, and $\|\scrD f\|_\infty^2 : = \sum_{d=1}^D \|\pd_d f\|_\infty^2$, $\|\scrD^2 f\|_\infty^2 : = \sum_{d, d' =1}^D  \|\pd^2_{d, d'} f\|_\infty^2$, where partial derivatives $\pd_d f: = \pd_{x^{(d)}} f$ and $\pd^2_{d, d'} f: = \pd^2_{x^{(d)} x^{(d')}} f$; 
	
	\item When $f$ has several (multivariate) components, we use $\scrD_y f$ and $\scrD^2_{yy} f$ to indicate the gradient and Hessian of $f$ with respect to component $y$. If $x$ is a scalar component and $y$ is a multivariate component, then we write $\scrD^2_{xy} : = (\pd^2_{xy^{(1)}}, \ldots, \pd^2_{xy^{(D)}})^\tran$.

	\item $\supp(f)$ stands for the support of $f$, i.e. the closure of  $\{x \in \bR^D : f(x) \neq 0\}$.
\end{itemize}
For $k =1, 2,\ldots$, denote by $C^k(\bR^D)$ the family of all $k$ times continuously differentiable functions on $\bR^D$. $C^k_b(\bR^D)$ consists of all bounded $f \in C^k(\bR^D)$ with bounded derivatives (up to the $k$-th order) and $C^\infty_b(\bR^D) : = \cap_{k \ge 1} C^k_b(\bR^D)$.  $C^k_c(\bR^D)$ denotes the family of all $f \in C^k(\bR^D)$ with compact support. We let $f \in C^{1,2}([0, T] \times \bR^D)$ if $f$ is (resp. twice) continuously differentiable with respect to $t \in [0, T]$ (resp. to $y \in \bR^D$) and its partial derivatives are jointly continuous.

\subsection{Stochastic basis} Let us fix a time horizon $T \in (0, \infty)$. Assume that $(\Omega, \cF, \bP; (\cF_{t})_{t \in [0, T]})$ satisfies the usual conditions, which means that $(\Omega, \cF, \bP)$ is a compete probability space, the filtration $(\cF_t)_{t \in [0, T]}$ is right-continuous and $\cF_0$ contains all $\bP$-null sets. This allows us to assume that every $(\cF_t)_{t \in [0, T]}$-adapted local martingale has \textit{c\`adl\`ag} (right-continuous with finite left limits) paths. For a random variable $\xi$, the expectation, variance, and conditional expectation given a sub-$\sigma$-algebra $\cG \subseteq \cF$, if it exists under $\bP$, is respectively denoted by $\bE[\xi]$, $\bV[\xi]$, and $\bE[\xi|\cG]$. We also use $\bfL_p(\bP): = \bfL_p(\Omega, \cF, \bP)$.

For a  c\`adl\`ag process $X = (X_t)_{t \in [0, T]}$, we denote $\Delta X_t : = X_t - X_{t-}$ for $t \in [0, T]$, where $X_{0-} : = X_0$ and $X_{t-} : = \lim_{t>s \uparrow t} X_s$ for $t \in (0, T]$. For a time index set $\bI \subseteq [0, \infty)$ and for processes $X = (X_t)_{t \in \bI}$, $Y = (Y_t)_{t \in \bI}$, we write $X = Y$ to indicate that $X_t = Y_t$ for
all $t \in \bI$ a.s., and the same meaning applied when the relation ``='' is replaced by some other standard
relations such as ``$\le$'', ``$>$'', etc.

 We refer to \cite{Pr05} for unexplained notions such as semimartingales, (optional) quadratic covariation $[X, Y]$ and conditional quadratic covariation $\<X, Y\>$ of semimartingales $X$, $Y$.

\subsection{Multidimensional L\'evy process}
An $\bR^D$-valued process $L = (L_t)_{t \in [0, T]}$ is called a  L\'evy process if it has independent and stationary increments, has c\`agl\`ag paths with $L_0 = 0$ a.s. The distributional property of $L$ is characterized by the L\'evy--Khintchine formula (see, e.g., \cite[Theorem 1.2.14]{Ap09}), for $t \in [0, T]$ and $u \in \bR^D$,
\begin{align*}
	\bE[\e^{\im u^\tran L_t}] = \e^{- t \kappa(u)}
\end{align*}
where the \textit{characteristic exponent} $\kappa$ is given, for $u \in \bR^D$, by
\begin{align*}
	\kappa(u) = - \im u^\tran b + \frac{u^\tran Au}{2} - \int_{z\neq 0} (\e^{\im u^\tran z} - 1 - \im u^\tran z \1_{\{\|z\|\le 1\}}) \nu(\od z).
\end{align*}
The characteristic triplet $(b, A, \nu)$ associated with the canonical truncation function $h(z): = z\1_{\{\|z\| \le 1\}}$ is deterministic and consists of the \textit{drift coefficient} $b \in \bR^D$, the \textit{Gaussian covariance matrix}  $A \in \bS^D_+$, and the \textit{L\'evy measure} $\nu$, i.e. a measure on $\cB(\bR^D\backslash\{0\})$ with $\int_{z \neq 0} (\|z\|^2 \wedge 1) \nu(\od z) <\infty$.  We call $L$ a \textit{Gaussian L\'evy process} if $\nu \equiv 0$, and call $L$ a \textit{purely non-Gaussian L\'evy process} if $A =0$.

\subsection{Classical continuous-time MV portfolio selection}\label{sec:classical-MV} Assume that the price process of a risk-less asset $S^{(0)} = (S^{(0)}_t)_{t \in [0, T]}$ and $D$ risky assets $S = (S_t)_{t \in [0, T]}$ are governed by the following SDEs
\begin{align*}
	\od S^{(0)}_t & = r S^{(0)}_t \od t, \quad S^{(0)}_0 = 1,\\
	\od S_t^{(d)} & = S_{t-}^{(d)} \od R_t^{(d)}, \quad S_0^{(d)}: = s_0^{(d)} >0, \quad d = 1, \ldots, D.
\end{align*} 
where the interest rate $r \ge 0$ is given and the (stochastic) log-price process $R$ is described by \eqref{SDE-classical-MV} and \eqref{eq:generator-Y} below. In the context of stock price modeling, it is natural to  assume the condition $\Delta R^{(d)} > -1$ on the jump sizes, which  ensures that the stock prices $S^{(d)}$ stay strictly positive. This condition is, however, not required to obtain the main results, and so we do not impose it.

An investment strategy in $D$ risky assets is expressed by a predictable $\bR^D$-valued process $H$ where $H^{(d)}_t$ is the  \textit{discounted dollar amount} invested in the $d$-th risky asset at time $t-$, i.e instantly before time $t$. The resulting discounted wealth process $X^H = (X^H_t)_{t \in [0, T]}$ associated with $H$ can be written as
\begin{align}\label{SDE-classical-MV}
	\od X^H_t = \sum_{d = 1}^D H_t^{(d)} (\od R_t^{(d)} - r\od t) =: H_t^\tran \od Y_t,
\end{align} 
where $X^H_0 = x_0 \in \bR$ is the given initial wealth. From now we will work with the driving process $Y$ and the discounted wealth $X^H$ as in \eqref{SDE-classical-MV}.

Assume that the log-price of $D$ underlying (discounted) risky assets is represented by a c\`adl\`ag and adapted process $Y = (Y_t)_{t \in [0, T]}$ which is Markovian whose infinitesimal generator is given, for sufficiently smooth $f$, by
\begin{align}\label{eq:generator-Y}
	(\cL_Y f)(y)  & = b(y)^\tran \scrD f(y) + \frac{1}{2} \Tr[A(y) \scrD^2 f(y)]  + \int_{z \neq 0} \B(f(y + \gamma(y) z) - f(y) - \scrD f(y)^\tran \gamma(y) z \B) \nu(\od z).
\end{align}
Here $\nu$ is a square integrable L\'evy measure and the coefficients $b\colon \bR^D \to \bR^D$, $A \in \bS^D_+$, and $\gamma\colon \bR^D \to \bR^{D\times D}$ satisfy standard assumptions which will be specified later in \cref{sec:setting}.

The classical Markowitz MV portfolio selection problem, parameterized by $\hat{z} \in \bR$, is then formulated as
\begin{align}\label{classical-MV-constraint}
\begin{cases}
	\min_H \bV[X^H_T]\\
	\mbox{subject to } X^H \mbox{ given in } \eqref{SDE-classical-MV} \mbox{ and }\bE[X^H_T] = \hat{z},
\end{cases}
\end{align}
where the minimum is taken over admissible $H$ which will be specified in our setting later. To deal with the constraint $\bE[X^H_T] = \hat{z}$ in \eqref{classical-MV-constraint}, we follow \cite{ZL00, WZ20} to consider the objective function parameterized by $w \in \bR$,
\begin{align*}
	\bV[X^H_T] - 2 w (\bE[X^H_T] - \hat{z}),
\end{align*} 
which is equal to 
$$\bE[(X^H_T - w)^2] - (\hat{z} - w)^2.$$
Then, to solve \eqref{classical-MV-constraint}, we consider the following \textit{unconstrained} quadratic-loss minimization problem parameterized by $w$,
\begin{align}\label{classical-MV-non-constraint}
		\begin{cases}
		\min_{H} \bE[(X^H_T - w)^2]\\
		\mbox{subject to } X^H \mbox{ given in } \eqref{SDE-classical-MV}.
	\end{cases}\end{align}
Once \eqref{classical-MV-non-constraint} is solved with a minimizer $H^*(w)$, which  depends on $w$, we let $\hat{w}$ be the value such that the constraint $\bE \b[X^{H^*(\hat{w})}_T\b] = \hat{z}$ is satisfied. Then such an $H^*(\hat{w})$ solves the original problem \eqref{classical-MV-constraint}, and $\hat{w}$ is called the \textit{Lagrange multiplier}\footnote{The Lagrange multiplier actually is $2\hat{w}$, but we use $\hat{w}$ to slightly simplify the presentation.}.

\section{Exploratory SDE with L\'evy jumps}
\label{sec:Exploratory-SDE-jump}
\subsection{Setting}\label{sec:setting}

Let us fix $D \in \bN$ and set $\RO: =  \bR^D\backslash\{0\}$. Let $\varphi_D$ be a probability density of $\xi \sim \cN(0, I_D)$ where $\cN(0, I_D)$ is the $D$-dimensional Gaussian distribution with zero mean and covariance $I_D$.

For $b, A, \gamma$ and $\nu$ appearing in \eqref{eq:generator-Y} we assume throughout this article the following:

\begin{assum}\label{assum:SDE-coefficients}
	The L\'evy measure $\nu$ and coefficients $b\colon \bR^D \to \bR^D$, $a, \gamma \colon \bR^D \to \bR^{D \times D}$ ,  $A: = aa^\tran \in \bS^D_+$ satisfy:
	\begin{enumerate}[\quad (a)]
		\itemsep3pt
		\item (Square integrability) $\nu$ is square integrable on $\RO$, i.e. $\int_{\RO} \|e\|^2 \nu(\od e) <\infty$;
		
		\item (Growth condition) $\|b(x)\| + \|a(x)\| + \|\gamma(x)\| \le C_1 (1+\|x\|)$ for all $x \in \bR^D$;
		
		\item (Lipschitz condition) $\|b(x) - b(y)\| + \|a(x) - a(y)\| + \|\gamma(x)- \gamma(y)\| \le C_2 \|x -y\|$ for all $x, y \in \bR^D$;
		
		\item (Non-degeneration) $\Sigma(y) : =  A(y)  + \gamma(y) \int_{\RO} e e^\tran \nu(\od e) \gamma(y)^\tran \in \bS^D_{++}$ for all $y \in \bR^D$.
	\end{enumerate}
\end{assum}

\subsection{Continuous-time dynamic of the wealth process with exploration: A heuristic approach}
Let $W = (W_t)_{t \in [0, T]}$ be a $D$-dimensional standard Brownian motion, and $J = (J_t)_{t \in [0, T]} \subseteq \bfL_2(\bP)$ a purely non-Gaussian L\'evy process which is independent of $W$ and has the following L\'evy--It\^o decomposition (see, e.g., \cite[Theorem 2.4.26]{Ap09})
\begin{align*}
	J_t : = \int_0^t \nq \int_{\RO} e \wt N(\od s, \od e). 
\end{align*} 
Here $\wt N$ is the compensated Poisson random measure of $J$  associated with L\'evy measure $\nu$. Since $b, a, \gamma$ in \cref{assum:SDE-coefficients} are sufficiently regular, it is known that the following SDE has a unique (strong) solution in $\bfL_2(\bP)$ (see, e.g., \cite[Theorem 3.1]{Ku04}),
\begin{align*}
	\od Y_t = b(Y_{t-})\od t  + a(Y_{t-}) \od W_t +  \gamma(Y_{t-}) \od J_t, \quad Y_0 = y_0 \in \bR^D,
\end{align*}
which admits $\cL_Y$ provided in \eqref{eq:generator-Y} as the Markov generator. 

Let $\{\Pi_n\}_{n \ge 1}$ be a sequence of partitions of $[0, T]$, where $\Pi_n = \{0=:t^n_0 < t^n_1 < \ldots < t^n_n:=T\}$. Denote $\Delta t^n_i: = t^n_i - t^n_{i-1}$  and assume that $|\Pi_n| : = \max_{1\le i \le n}\Delta t^n_i\to 0$ as $n \to \infty$. To shorten the presentation at some places, for a process $(P_{t})_{t \in [0, T]}$, we also use the notations
\begin{align*}
	P_{n, i}: = P_{t^n_i} \quad \mbox{and} \quad \Delta_{n, i} P : = P_{n, i} - P_{n, i-1}.
\end{align*}
With the convention $\sup \emptyset : = 0$, we define
\begin{align*}
	\sigma^n_t : = \sup\{i \ge 1: t^n_i \le t\}, \quad t\in [0, T].
\end{align*}
For each $n$, we obtain a process $Y^n$, which approximates $Y$ along the partition $\Pi_n$, given by
\begin{align*}
	Y_t^{n} & := Y_0 + \sum_{i=1}^{\sigma^n_t} \B(b(Y_{n, {i-1}}) \Delta t^n_i  + a(Y_{n, {i-1}}) \Delta_{n, i} W +  \gamma(Y_{n, {i-1}}) \Delta_{n, i} J\B), \quad t \in [0, T].
\end{align*}

\subsubsection{Exploration procedure}
\label{sec:exploration-procedure} Our main idea is as follows:  For $i =1, \ldots, n$, we draw the control at time $t^n_{i-1}$ from some distribution, which is chosen with the accumulative information available at time $t^n_{i-1}$. Once the distribution is fixed, the realization is independent of the rest. In addition, since any distribution on $\bR^D$ can be represented as $F(\xi)$ for some measurable $F \colon \bR^D \to \bR^D$ and $\xi \sim \cN(0, I_D)$, determining a distribution boils down to find such an $F$.

Let us specify this idea.
\begin{enumerate}[(i)]
	\item Let $\Xi: = \{\xi_{n, i}\}_{n \ge 1, 1\le i \le n}$ be a collection of i.i.d. random vectors in $\bR^D$ with common distribution $\cN(0, I_D)$ and probability density $\varphi_D$. Assume that $\Xi$ is independent of $(W, J)$. Family $\Xi$ represents a new source of randomness caused from the exploration along with the randomness generated by $(W, J)$. To capture the information flow, we define the filtration $\bF^{\Pi_n} = (\cF_{n,i})_{i = 0}^n$ as follows	
	\begin{align*}
		\cF_{n, i} : = \sigma\{(W_s, J_s) : 0 \le s \le t^n_i\} \vee \cG_{n,i}, \quad \mbox{where} \quad \cG_{n, i}: = \sigma\b\{\xi_{n, j} : j \le i\b\}, \cG_{n, 0} : = \{\emptyset, \Omega\}.
	\end{align*}
	
	\item $H \colon \Pi_n \times \Omega \times \bR^D \to \bR^D$ is admissible in the following sense (here $H_{n, i-1}$ stands for $H_{t^n_{i-1}}$):
	\begin{enumerate}[(a)]
		\itemsep2pt
		
		\item For each $i = 1, \ldots, n$, the map $(\omega, u) \mapsto  H_{n, i-1}(\omega; u)$ is $\cF_{n, i-1} \otimes \cB(\bR^D)$-measurable;
		
		\item One has $\bE\b[\int_{\bR^D} \|H_{n, i-1}(u)\|^2 \varphi_D(u) \od u\b] <\infty$;
		
		\item As proposed in \cite{WZ20}, the exploration cost  can be represented in terms of differential entropy which is assumed to be finite to encourage the exploration. Following this idea, we in addition assume that for each $i = 1, \ldots, n$ and $\omega \in \Omega$,  $H_{n, i-1}(\omega; \zeta)$ has a probability density $p^H_{n, i-1}(\omega; \cdot)$, where $\zeta \sim \cN(0, I_D)$ is independent of $\cF_{n, i-1}$, such that $\int_{\bR^D} p^H_{n, i-1}(u) \log p^H_{n, i-1}(u) \od u$ is an integrable random variable. Then the expected accumulative differential entropy
		\begin{align*}
		\bE\bb[ -\sum_{i = 1}^n  (t^n_i - t^n_{i-1}) \int_{\bR^D} p^H_{n, i-1}(u) \log p^H_{n, i-1}(u) \od u \bb]
		\end{align*}
		is finite.
	\end{enumerate}
	\item The controlled wealth $X^H = (X^H_t)_{t \in [0, T]}$ associated with $H$ along time points of $\Pi_n$ is
	\begin{align*}
		X^{H}_{n,{i}} = X^{H}_{n, {i-1}} + H_{n, {i-1}}(\xi_{n,i})^{\tran} \Delta_{n, i} Y^n, \quad i = 1, \ldots, n, \quad X^H_0 = x_0 \in \bR.
	\end{align*}
\end{enumerate}

\begin{prop}\label{prop:decomposition-H}
	For $n \ge 1$, $1 \le i \le n$, there exist (uniquely up to a $\bP$-null set) a  random vector $\mu^H_{n, i-1}$ and a random matrix $\vartheta^H_{n, i-1} \in \bS^D_{++}$, both are $\cF_{n, i-1}$-measurable and square integrable, and a square integrable random vector $\eta^H_{n, i}$ with 
	\begin{align}\label{property-eta}
		\bE[\eta^H_{n, i}|\cF_{n, i-1}] = 0 \quad \mbox{and} \quad \bE[\eta^H_{n, i} (\eta^H_{n, i})^\tran|\cF_{n, i-1}] = I_D \quad \mbox{a.s.}
	\end{align}
	such that
	\begin{align}\label{eq:decomposition-H}
		H_{n, i-1}(\xi_{n,i}) & = \mu^H_{n,{i-1}} + \vartheta^H_{n, {i-1}} \eta^H_{n,i} \quad \mbox{a.s.}
	\end{align}
\end{prop}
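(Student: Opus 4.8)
The plan is to read off $\mu^H_{n,i-1}$ and $\vartheta^H_{n,i-1}$ as the conditional mean and a conditional square root of the covariance of $H_{n,i-1}(\xi_{n,i})$ given $\cF_{n,i-1}$, and then to \emph{define} $\eta^H_{n,i}$ as the corresponding standardized residual, so that \eqref{eq:decomposition-H} holds tautologically and \eqref{property-eta} becomes a routine conditioning computation. Since $\xi_{n,i}$ is independent of $\cF_{n,i-1}$ (as $\Xi$ is independent of $(W,J)$ and the $\xi_{n,j}$ are i.i.d.) and $(\omega,u)\mapsto H_{n,i-1}(\omega;u)$ is $\cF_{n,i-1}\otimes\cB(\bR^D)$-measurable with the integrability from admissibility~(b), the regular conditional law of $H_{n,i-1}(\xi_{n,i})$ given $\cF_{n,i-1}$ is $\bP$-a.s. the pushforward of $\cN(0,I_D)$ under $H_{n,i-1}(\omega;\cdot)$, and its first two moments exist a.s. I would therefore set
$$\mu^H_{n,i-1}:=\int_{\bR^D}H_{n,i-1}(u)\,\varphi_D(u)\,\mathrm{d}u,\qquad C^H_{n,i-1}:=\int_{\bR^D}H_{n,i-1}(u)H_{n,i-1}(u)^\tran\varphi_D(u)\,\mathrm{d}u-\mu^H_{n,i-1}(\mu^H_{n,i-1})^\tran.$$
Joint measurability and Fubini give that both are $\cF_{n,i-1}$-measurable, and Jensen's inequality together with $\Tr[C^H_{n,i-1}]\le\int_{\bR^D}\|H_{n,i-1}(u)\|^2\varphi_D(u)\,\mathrm{d}u$ give square integrability of both.

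Granting for the moment that $C^H_{n,i-1}\in\bS^D_{++}$ a.s., I would put $\vartheta^H_{n,i-1}:=(C^H_{n,i-1})^{\frac12}\in\bS^D_{++}$ (continuity of the square-root map on $\bS^D_{++}$ preserves $\cF_{n,i-1}$-measurability, and likewise for $(\vartheta^H_{n,i-1})^{-1}=(C^H_{n,i-1})^{-\frac12}$), and
$$\eta^H_{n,i}:=(\vartheta^H_{n,i-1})^{-1}\bigl(H_{n,i-1}(\xi_{n,i})-\mu^H_{n,i-1}\bigr),$$
which is $\cF_{n,i}$-measurable and square integrable. Then \eqref{eq:decomposition-H} holds by construction, and \eqref{property-eta} follows by conditioning on $\cF_{n,i-1}$: $\bE[H_{n,i-1}(\xi_{n,i})\mid\cF_{n,i-1}]=\mu^H_{n,i-1}$ and $\bE[(H_{n,i-1}(\xi_{n,i})-\mu^H_{n,i-1})(\,\cdot\,)^\tran\mid\cF_{n,i-1}]=C^H_{n,i-1}$, so $\bE[\eta^H_{n,i}\mid\cF_{n,i-1}]=0$ and $\bE[\eta^H_{n,i}(\eta^H_{n,i})^\tran\mid\cF_{n,i-1}]=(\vartheta^H_{n,i-1})^{-1}C^H_{n,i-1}(\vartheta^H_{n,i-1})^{-1}=I_D$. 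Uniqueness is then immediate: any other admissible triple $(\tilde\mu,\tilde\vartheta,\tilde\eta)$ must have $\tilde\mu=\bE[H_{n,i-1}(\xi_{n,i})\mid\cF_{n,i-1}]=\mu^H_{n,i-1}$ a.s.; taking the conditional covariance of $H_{n,i-1}(\xi_{n,i})=\tilde\mu+\tilde\vartheta\tilde\eta$ and using $\tilde\vartheta\in\bS^D_{++}$ forces $\tilde\vartheta^2=C^H_{n,i-1}$, hence $\tilde\vartheta=\vartheta^H_{n,i-1}$ by uniqueness of the positive-definite square root; and then $\tilde\eta=\tilde\vartheta^{-1}(H_{n,i-1}(\xi_{n,i})-\tilde\mu)=\eta^H_{n,i}$ a.s.

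The one step that is not bookkeeping — and which I expect to be the crux — is the nondegeneracy claim $C^H_{n,i-1}\in\bS^D_{++}$ a.s., without which the inversion and standardization above are meaningless. This is exactly where admissibility condition~(c) is used. If, on a set of positive probability, $v^\tran C^H_{n,i-1}(\omega)v=0$ for some unit vector $v$, then $v^\tran H_{n,i-1}(\omega;\zeta)$ has zero variance, i.e. is a.s. equal to a constant, so the law of $H_{n,i-1}(\omega;\zeta)$ is carried by an affine hyperplane and is therefore singular with respect to $D$-dimensional Lebesgue measure; this contradicts the assumption in~(c) that $H_{n,i-1}(\omega;\zeta)$ admits a probability density $p^H_{n,i-1}(\omega;\cdot)$ for $\bP$-a.e. $\omega$. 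Hence $C^H_{n,i-1}\in\bS^D_{++}$ a.s., which completes the argument.
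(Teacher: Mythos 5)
Your proposal is correct and follows essentially the same route as the paper: define $\mu^H_{n,i-1}$ and the conditional covariance by integration against $\varphi_D$, take $\vartheta^H_{n,i-1}$ as its positive-definite square root (justified by admissibility (c), exactly as you argue), and let $\eta^H_{n,i}$ be the standardized residual. The only cosmetic difference is that the paper verifies the conditional second-moment identity via a spectral decomposition and an $\varepsilon$-regularization of the eigenvalues followed by monotone convergence, whereas you pull the $\cF_{n,i-1}$-measurable factor $(\vartheta^H_{n,i-1})^{-1}$ out of the conditional expectation directly; both are fine.
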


\begin{proof} See \cref{app:proof:prop:decomposition-H}.
\end{proof}

We decompose the process $X^H$ with the initial wealth $X^H_0 = x_0 \in \bR$ as
\begin{align}\label{eq:decomposition-wealth-exploration}
	X^{H}_{t}  = x_0 & +  \sum_{i=1}^{\sigma^n_t} H_{n, i-1}(\xi_{n,i})^\tran b(Y_{n,{i-1}}) \Delta t^n_{i} \notag \\
	&  + \sum_{i=1}^{\sigma^n_t} H_{n, i-1}(\xi_{n,i})^\tran a(Y_{n,{i-1}}) \Delta_{n,i} W  + \sum_{i=1}^{\sigma^n_t} H_{n, i-1}(\xi_{n,i})^\tran \gamma(Y_{n,{i-1}}) \Delta_{n,i} J \notag\\
	=: x_0 & + I_{\eqref{eq:decomposition-wealth-exploration}} + I\!I_{\eqref{eq:decomposition-wealth-exploration}} + I\!I\!I_{\eqref{eq:decomposition-wealth-exploration}}.
\end{align}

\subsubsection{The drift part $I_{\eqref{eq:decomposition-wealth-exploration}}$} \label{sec:dift-part} According to the decomposition \eqref{eq:decomposition-H}, we express, a.s., 
\begin{align*}
	I_{\eqref{eq:decomposition-wealth-exploration}} & = \sum_{i=1}^{\sigma_t^n} (\mu^H_{n, i-1})^\tran b(Y_{n,{i-1}}) \Delta t^n_i +  \sum_{i=1}^{\sigma_t^n} (\vartheta^H_{n,i-1} \eta^H_{n,i})^\tran b(Y_{n,{i-1}}) \Delta t^n_i\\
	& = \sum_{i=1}^{\sigma_t^n} (\mu^H_{n, i-1})^\tran b(Y_{n,{i-1}}) \Delta t^n_i +  \sum_{d=1}^D \sum_{i=1}^{\sigma_t^n} \BB[\sum_{k=1}^D \vartheta^{H, (k, d)}_{n, i-1} b^{(k)}(Y_{n, i-1})\BB] \B[\eta^{H, (d)}_{n, i} \Delta t^n_i\B].
\end{align*}
For the discrete-time integrator in the second term, we have the following law of large numbers
\begin{align*}
	\sum_{i=1}^{\sigma^n_t} \eta^{H, (d)}_{n, i} \Delta t^n_i \xrightarrow{\bfL_2(\bP)} 0 \quad \mbox{as } n \to \infty
\end{align*}
for all $d = 1, \ldots, D$. Indeed, due to the orthogonality and $\bE[|\eta^{H, (d)}_{n,i}|^2] =1$ it holds that
\begin{align*}
	\bE\bb[\bb|\sum_{i=1}^{\sigma^n_t} \eta^{H, (d)}_{n, i} \Delta t^n_i\bb|^2\bb]=  \sum_{i=1}^{\sigma^n_t} |\Delta t^n_i|^2 \le t \max_{1\le i \le n} \Delta t^n_i \to 0.
\end{align*}

\subsubsection{The Brownian part $I\!I_{\eqref{eq:decomposition-wealth-exploration}}$} \label{discrete-BM-part}
By the same arguments as for the drift part, we decompose $I\!I_{\eqref{eq:decomposition-wealth-exploration}}$ as
\begin{align*}
	I\!I_{\eqref{eq:decomposition-wealth-exploration}}
	& = \sum_{i=1}^{\sigma_t^n} (\mu^H_{n, i-1})^{\tran} a(Y_{n,{i-1}}) \Delta_{n,i} W +    \sum_{d, d' =1, \ldots, D} \; \sum_{i=1}^{\sigma_t^n} \BB[\sum_{k=1}^D\vartheta_{n, i-1}^{H, (k, d)} a^{(k, d')}(Y_{n,{i-1}})\BB]  \B[\eta_{n,i}^{H,(d)}  \Delta_{n,i} W^{(d')}\B].
\end{align*}
Define the interpolated process $W^n = (W^n_t)_{t \in [0, T]}$ and the $\bR^{D^2}$-valued process $M^{n} = (M^n_t)_{t \in [0, T]}$ by
\begin{align*}
		W^n_t & : = \sum_{i =1}^{\sigma^n_t} \Delta_{n, i} W,\\
		M_t^{n, (d, d')} & := \sum_{i=1}^{\sigma^n_t} \eta_{n,i}^{H, (d)} \Delta_{n,i} W^{(d')}, \quad d, d' = 1, \ldots, D,\\
		M^n_t &= (M_t^{n, (1, 1)}, \ldots, M_t^{n, (1, D)}, M_t^{n, (2, 1)}, \ldots, M_t^{n, (2, D)}, \ldots, M_t^{n, (D, 1)}, \ldots, M_t^{n, (D, D)})^{\tran},
	\end{align*}
so that $\Delta_{n, i} W^n = \Delta_{n, i}W$ and $M^n_t = \sum_{i=1}^{\sigma^n_t} \eta^H_{n,i} \pr \Delta_{n, i} W$. Then we get
\begin{align*}
	I\!I_{\eqref{eq:decomposition-wealth-exploration}} =  \sum_{i=1}^{\sigma_t^n} (\mu^H_{n, i-1})^\tran a(Y_{n,{i-1}}) \Delta_{n,i} W^n + \sum_{d, d' =1, \ldots, D}\; \sum_{i=1}^{\sigma_t^n} [\vartheta^H_{n, i-1} a(Y_{n, i-1})]^{(d,d')}  \Delta_{n, i} M^{n, (d, d')}.
	\end{align*}
Here $W^n, M^n$ can be respectively regarded as a discrete-time integrator of the first and the second term in the decomposition of $I\!I_{\eqref{eq:decomposition-wealth-exploration}}$.

\subsubsection{The jump part $I\!I\!I_{\eqref{eq:decomposition-wealth-exploration}}$}\label{discrete-jump-part}
\
For the jump part in \eqref{eq:decomposition-wealth-exploration}, a first try is to rewrite
\begin{align*}
	I\!I\!I_{\eqref{eq:decomposition-wealth-exploration}}
	&= \int_{(0, t] \times \RO \times \bR^D} \sum_{i=1}^{n} \bb[H_{n, i-1} (u)^{\tran} \gamma(Y_{n, {i-1}}) e\bb] \1_{(t^n_{i-1}, t^n_i]}(s) \bbm_n (\od s, \od e, \od u),
\end{align*}
for the random measure 
\begin{align}\label{eq:jump_measure_finite_MC}
	\bbm_n(\od t, \od e, \od u)& :  = \sum_{i =1}^n \delta_{(t^n_{i}, \, \Delta_{n, i} J, \,  \xi_{n,i})}(\od t, \od e, \od u),
\end{align}
on $\cB([0, T] \times E \times \bR^D)$. Here,  $\delta$ denotes the Dirac measure. So, we move the Gaussian random variables $\xi_{n,i}$ for the control randomization from the integrand to the random measure $\bbm_n$, that acts as a new integrator. It is, however, intuitively clear that the limit random measure (in a weak sense) should be
\begin{equation}  \label{eq:jump_measure_finite}
\bbm(\od t, \od e, \od u)=\sum_{j} \delta_{(\tau_j, \, \Delta J_{\tau_j}, \,  \xi_j)}(\od t, \od e, \od u), 
\end{equation}
where $(\tau_j)_{j\in \mathbb{N}}$ are the jump times of $J$ and $(\xi_j)_{j\in \mathbb{N}}$ is a sequence of independent standard Gaussians (independent of $J$), i.e.,  in the limit we would like to create independent Gaussian jumps at each jump time of $L$ as additional source of noise. If the original L\'evy process has infinite activity, this random measure does not induce a L\'evy process, because the squared Gaussian jumps
$
\sum_{j;\tau_j\leq t} \xi_j^2
$ 
do not converge. As a way out, we re-scale the additional Gaussian jumps depending on the jump sizes of the original L\'evy process. 

To this end, let us fix a $\psi \in C^2(\bR^D)$ which satisfies
$$\|\scrD \psi\|_\infty + \|\scrD^2\psi\|_\infty <\infty, \quad \psi \ge 0 \quad \mbox{and} \quad  \psi(x) = 0 \Leftrightarrow x = 0.$$
A prototype example in our context is that, for a given constant $c >0$,
$$\psi(x) = \sqrt{\|x\|^2 + c^2} - c.$$
Define the random measure $\bbm^\psi_n$ on $\cB([0, T] \times E \times \bR^D)$ by setting
\begin{align*}
	\bbm^\psi_n(\od t, \od e, \od u)& :  = \sum_{i =1}^n \delta_{(t^n_{i}, \, \Delta_{n, i} J, \, \psi(\Delta_{n, i} J) \xi_{n,i})}(\od t, \od e, \od u).
\end{align*}
 Then the third term $I\!I\!I_{\eqref{eq:decomposition-wealth-exploration}}$ is finally expressed as
\begin{align*}
	I\!I\!I_{\eqref{eq:decomposition-wealth-exploration}}
	&= \int_{(0, t] \times \RO \times \bR^D} \sum_{i=1}^{n} \bb[H_{n, i-1} \bb(\frac{u}{\psi(e)}\bb)^{\tran} \gamma(Y_{n, {i-1}}) e\bb] \1_{(t^n_{i-1}, t^n_i]}(s) \bbm^\psi_n (\od s, \od e, \od u).
\end{align*}
Note that the random measure $\bbm^\psi_n$ is characterized by the induced martingale $L^{n, \psi} = (L^{n, \psi}_t)_{t \in [0, T]}$ with $L^{n, \psi}_0 = 0$ and
\begin{align*}
	L^{n, \psi}_t: = \int_{(0, t] \times \RO \times \bR^D} (e, u)^\tran \bbm^\psi_n(\od s, \od e, \od u) = \sum_{i=1}^{\sigma^n_t}  (\Delta_{n, i} J, \psi(\Delta_{n, i} J) \xi_{n,i})^\tran= \sum_{i=1}^{\sigma^n_t} \Delta_{n, i} L^{n, \psi}.
\end{align*}
As explained above, the ``damping factor'' $\psi(\Delta_{n, i} J)$ in front of $\xi_{n, i}$ in the third coordinate of $\bbm^\psi_n$  is introduced  to ensure that $	L^{n, \psi}$ can converge to a L\'evy process in the infinite activity case.   The smoothness condition for $\psi$ is merely convenient for applying It\^o's formula later on.

\subsubsection{Distributional limit of discrete-time integrators}\label{sec:distributional-limit}

Set $\bfD: = D^2 + 3D$. We collect all discrete-time integrators of the Brownian and the jump parts to obtain the triangular array of $\bfD$-dimensional random vectors $\bmZ^n = (\bmZ^n_t)_{t \in [0, T]}$ with
\begin{align*}
	\bmZ^n: = \vect(W^n, M^n,	L^{n, \psi}).
\end{align*}
Our purpose is to investigate the distributional limit of $(\bmZ^n)_{n\ge 1}$. To this end, we introduce the Borel measure $\nu_{L}^\psi$ defined on $\bR^{2D}$ by setting
\begin{align*}
	\nu_{L}^\psi(\od e, \od u) : = \1_{\{\|e\| >0\}} \varphi_D\bb(\frac{u}{\psi(e)}\bb) \frac{\od u}{\psi(e)^D} \nu(\od e), \quad e, u \in \bR^D.
\end{align*}
Then, by a change of variables, one has
\begin{align*}
	\int_{\bR^{2D}} f(e, u) \nu^\psi_L(\od e, \od u) = \int_{\RO \times \bR^D} f(e, \psi(e)u) \nu(\od e) \varphi_D(u) \od u
\end{align*}
provided that $f\ge 0$ or  $\int_{\bR^{2D}}|f(e,u)| \nu^\psi_L(\od e, \od u) <\infty$. In particular, choosing $f(e,u) = \|e\|^2 + \|u\|^2$ we find that $\nu_{L}^\psi$ is a square integrable L\'evy measure on $\bR^{2D}\backslash\{0\}$ with $\nu_L^\psi(\{0\}\times \bR^D) = 0$ as
\begin{align*}
	&\int_{\bR^{2D}\backslash\{0\}} (\|e\|^2 + \|u\|^2) \nu_{L}^\psi(\od e, \od u) = \int_{\RO \times \bR^D}(\|e\|^2 + \psi(e)^2 \|u\|^2)\nu(\od e) \varphi_D(u) \od u \\
	& = \int_{\RO} \|e\|^2 \nu(\od e) + \int_{\RO}\psi(e)^2 \nu(\od e) \int_{\bR^D}\|u\|^2 \varphi_D(u) \od u  \le (1+ D\|\scrD \psi\|_\infty^2)\int_{\RO} \|e\|^2 \nu(\od e)  < \infty. 
\end{align*}

We need the following condition to obtain the desired weak convergence.

\begin{assum}\label{assumption-eta}
	$\{\wt H_{n, i-1}(\xi_{n, i})^\tran (\Theta^H_{n, i-1})^{-1} \wt H_{n, i-1}(\xi_{n, i})\}_{1\le i \le n, n\ge 1}$ is uniformly integrable. 
\end{assum}

\begin{rema} Let us briefly comment on \cref{assumption-eta}.
	\begin{enumerate}
		\itemsep2pt
		\item By the construction of $\eta^H_{n, i}$ in the proof of \cref{prop:decomposition-H}, one has, a.s.,
		\begin{align*}
			&\wt H_{n, i-1}(\xi_{n, i})^\tran (\Theta^H_{n, i-1})^{-1} \wt H_{n, i-1}(\xi_{n, i}) \\
			& = \Tr[(\Theta^H_{n, i-1})^{-\frac{1}{2}}\wt H_{n, i-1}(\xi_{n, i})((\Theta^H_{n, i-1})^{-\frac{1}{2}}\wt H_{n, i-1}(\xi_{n, i}))^\tran] \\
			& = \|\eta^H_{n, i}\|^2,
		\end{align*}
		i.e., \cref{assumption-eta} is equivalent to the uniform integrability of $\{\|\eta^H_{n, i}\|^2\}_{1\le i \le n, n\ge 1}$.
		
		\item Assume, for all $n$, that $H\colon \Pi_n \times \Omega \times \bR^D \to \bR^D$ has the form
		\begin{align}\label{remark-linear-control-discrete}
		H_{n, i-1}(\omega; u) = \mathsf{m}_{n, i-1}(\omega) + \mathsf{v}_{n, i-1}(\omega) u, \quad i = 1, \dots, n,
		\end{align}
		where $\mathsf{m}_{n, i-1}$ and $\mathsf{v}_{n, i-1}$ are respectively $\bR^D$-valued and $\bS^D_{++}$-valued random variables, both are $\cF_{n, i-1}$-measurable and square integrable with $\log(\det(\mathsf{v}_{n, i-1})) \in \bfL_1(\bP)$.
	 Then $H$ is linear with respect to the exploration variable and is admissible in the sense given in \cref{sec:exploration-procedure}. Moreover, in the notation of \cref{prop:decomposition-H}, one has $\mu^H_{n, i-1} = \mathsf{m}_{n, i-1}$, $\vartheta^H_{n, i-1} = \mathsf{v}_{n, i-1}$, and $\eta^H_{n, i} = \xi_{n, i}$, which obviously implies that \cref{assumption-eta} holds.
	 
	 \item We will see in \cref{thm:verification} below that the time discretization of the optimal control process for the associated continuous-time control problem has the form  \eqref{remark-linear-control-discrete}. 
	\end{enumerate}
\end{rema}

Under the setting of \Cref{sec:exploration-procedure}, we have the following result whose proof is postponed to \Cref{sec:proof-weak-convergence}.

\begin{theo}\label{thm:limit-distribution-integrator} Assume that $\cW$ is a $D^2$-dimensional standard Brownian motion independent of $W$, and that $L^\psi$ is a square integrable martingale null at $0$ which is a $2D$-dimensional purely non-Gaussian L\'evy process with L\'evy measure $\nu^\psi_L$. Assume that processes $W, \cW, L^\psi$ are defined on the same probability space. Then,  $L^\psi$ is independent of $(W, \cW)$, and under \cref{assumption-eta}, the sequence $(\bmZ^n)_{n \ge 1}$ converges weakly to $\vect(W, \cW, L^\psi)$ as $n \to \infty$ in the Skorokhod topology on the space of c\`adl\`ag functions $f \colon [0, T] \to \bR^{D^2 + 3D}$.
\end{theo}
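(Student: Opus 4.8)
I would use the classical "tightness $+$ identification of the limit" scheme. Step 1: show $(\bmZ^n)_{n\ge1}$ is tight in the Skorokhod space. Step 2: show that every subsequential limit solves the martingale problem for the operator
\begin{align*}
 \cL_X f(y) &:= \tfrac12\Tr\big[\scrD^2 f(y)\,\bar A\big] \\
 &\quad + \int_{\RO\times\bR^D}\!\Big(f\big(y+\vect(0,0,e,\psi(e)u)\big)-f(y)-\scrD_{\mathrm{III}}f(y)^\tran e\Big)\,\nu(\od e)\,\varphi_D(u)\,\od u, \qquad f\in C^\infty_c(\bR^{\bfD}),
\end{align*}
where $\bar A:=\diag(I_D,I_{D^2},0_{2D})\in\bS^{\bfD}_+$, the four blocks correspond to $\bmZ^n=\vect(W^n,M^n,J^n,S^n)$ with $J^n_t:=\sum_{i=1}^{\sigma^n_t}\Delta_{n,i}J$, $S^n_t:=\sum_{i=1}^{\sigma^n_t}\psi(\Delta_{n,i}J)\xi_{n,i}$, and $\scrD_{\mathrm{III}}$ denotes the gradient in the third ($D$-dimensional, $J$-)block. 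By the change-of-variables identity for $\nu^\psi_L$ stated before the theorem, $\cL_X$ is exactly the generator of a $\bfD$-dimensional L\'evy process, and it is the \emph{direct sum} of a Brownian part living on the first $D+D^2$ coordinates (covariance $\bar A$) and a pure-jump part living on the last $2D$ coordinates (L\'evy measure $\nu^\psi_L$, compensated only in its first $D$ components). Step 3: invoke well-posedness of the martingale problem for L\'evy generators; this pins down the law of the limit as that of the L\'evy process $X$, which — since $\cL_X$ is a direct sum — is of the form $\vect(W,\cW,L^\psi)$ with $W$, $\cW$, $L^\psi$ mutually independent of the prescribed marginal laws. In particular $W\perp\cW$ and $L^\psi\perp(W,\cW)$, which is the asserted independence. (Equivalently one may verify the Jacod--Shiryaev criterion for weak convergence of semimartingales to a process with independent increments via predictable characteristics; its hypotheses reduce to the estimates below.)

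\textbf{The discrete generator expansion.} Equip $\Omega$ with $\cF^n_t:=\cF_{n,\sigma^n_t}$. Then $\bmZ^n$ is an $(\cF^n_t)$-martingale starting at $0$: each of the four blocks of $\Delta_{n,i}\bmZ^n$ has zero $\cF_{n,i-1}$-conditional mean (for the $M^n$-block one uses that, conditionally on $\cF_{n,i-1}$, $\eta^H_{n,i}$ is a function of $\xi_{n,i}$ only while $\Delta_{n,i}W$ is centred and independent of $(\cF_{n,i-1},\xi_{n,i})$, together with $\bE[\eta^H_{n,i}\mid\cF_{n,i-1}]=0$; for the $S^n$-block that $\xi_{n,i}$ is centred and independent of $(\cF_{n,i-1},\Delta_{n,i}J)$). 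Hence $A^{n,f}_t:=\sum_{i=1}^{\sigma^n_t}\bE[f(\bmZ^n_{n,i})-f(\bmZ^n_{n,i-1})\mid\cF_{n,i-1}]$ makes $M^{n,f}:=f(\bmZ^n)-f(0)-A^{n,f}$ a martingale for each $f\in C^\infty_c(\bR^{\bfD})$. Writing $\Delta_{n,i}\bmZ^n=\beta_{n,i}+\lambda_{n,i}$ with $\beta_{n,i}:=G_{n,i}\Delta_{n,i}W$ for the $\sigma(\cF_{n,i-1},\xi_{n,i})$-measurable matrix $G_{n,i}\in\bR^{\bfD\times D}$ given by $G_{n,i}w=\vect(w,\eta^H_{n,i}\pr w,0,0)$, and $\lambda_{n,i}:=\vect(0,0,\Delta_{n,i}J,\psi(\Delta_{n,i}J)\xi_{n,i})$, a second-order Taylor expansion of $f$ at $\bmZ^n_{n,i-1}+\lambda_{n,i}$ in the "small" direction $\beta_{n,i}$ (whose linear term vanishes since $\Delta_{n,i}W$ is centred and independent of $(\cF_{n,i-1},\xi_{n,i},\Delta_{n,i}J)$) yields
\begin{align*}
 \bE[f(\bmZ^n_{n,i})-f(\bmZ^n_{n,i-1})\mid\cF_{n,i-1}] &= \bE[g_{\bmZ^n_{n,i-1}}(\Delta_{n,i}J)] \\
 &\quad + \tfrac12\Tr\big[\scrD^2 f(\bmZ^n_{n,i-1})\,\bE[\beta_{n,i}\beta_{n,i}^\tran\mid\cF_{n,i-1}]\big] + \mathrm{Rem}_{n,i},
\end{align*}
where $g_y(e):=\int_{\bR^D}f(y+\vect(0,0,e,\psi(e)u))\varphi_D(u)\od u-f(y)$. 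By the identities $\bE[\eta^H_{n,i}\mid\cF_{n,i-1}]=0$ and $\bE[\eta^H_{n,i}(\eta^H_{n,i})^\tran\mid\cF_{n,i-1}]=I_D$ of \cref{prop:decomposition-H}, $\bE[\beta_{n,i}\beta_{n,i}^\tran\mid\cF_{n,i-1}]=\Delta t^n_i\,\bE[G_{n,i}G_{n,i}^\tran\mid\cF_{n,i-1}]=\Delta t^n_i\,\bar A$. The smoothness of $\psi$ makes $g_y\in C^2_b(\bR^D)$ with $g_y(0)=0$ and all $C^2$-bounds uniform in $y$ (the second $e$-derivative of $f(y+\vect(0,0,e,\psi(e)u))$ is polynomial of degree $\le2$ in $u$, hence $\varphi_D$-integrable); using It\^o's formula for $J$ and $\Delta_{n,i}J\stackrel{d}{=}J_{\Delta t^n_i}$ one gets $\bE[g_y(\Delta_{n,i}J)]=\Delta t^n_i(\cL_J g_y)(0)+r_{n,i}$ with $|r_{n,i}|\le\Delta t^n_i\,\rho(\Delta t^n_i)$, $\rho(0+)=0$, uniformly in $y$, where $\cL_J$ is the L\'evy generator of $J$; since $\int u\,\varphi_D(u)\od u=0$ one finds $\scrD g_y(0)=\scrD_{\mathrm{III}}f(y)$ (so $\scrD\psi(0)$ is irrelevant) and $(\cL_J g_y)(0)$ equals the integral term of $\cL_X f(y)$. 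Summing and using $\bmZ^n_{s-}=\bmZ^n_{n,i-1}$ on $(t^n_{i-1},t^n_i]$, $A^{n,f}_t=\int_0^t\cL_X f(\bmZ^n_{s-})\od s+\sum_{i\le\sigma^n_t}(\mathrm{Rem}_{n,i}+r_{n,i})+O(\|\cL_X f\|_\infty|\Pi_n|)$.

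\textbf{Controlling $\sum_i\mathrm{Rem}_{n,i}$ — the main obstacle.} One has $|\mathrm{Rem}_{n,i}|\le\tfrac12\bE[\omega_f(\|\lambda_{n,i}\|+\|\beta_{n,i}\|)\|\beta_{n,i}\|^2\mid\cF_{n,i-1}]$ with $\omega_f$ the (bounded) modulus of continuity of $\scrD^2 f$, and $\|\beta_{n,i}\|^2=(1+\|\eta^H_{n,i}\|^2)\|\Delta_{n,i}W\|^2$ by $\|x\pr y\|=\|x\|\|y\|$, whose sum over $i$ is only $O(1)$ a priori; the fix is a multi-level truncation. On $\{\|\eta^H_{n,i}\|>K\}\cup\{\|\xi_{n,i}\|>K\}$ bound $\omega_f\le2\|\scrD^2 f\|_\infty$ and use that $\Delta_{n,i}W$ is independent of $(\cF_{n,i-1},\xi_{n,i})$, so the contribution is $\lesssim T\sup_{n,i}\bE[(1+\|\eta^H_{n,i}\|^2)\1_{\{\|\eta^H_{n,i}\|>K\}\cup\{\|\xi_{n,i}\|>K\}}]$, which $\to0$ as $K\to\infty$ uniformly in $n$ because $\{\|\eta^H_{n,i}\|^2\}_{n,i}$ is uniformly integrable (\cref{assumption-eta}, in its absolute-continuity form) and $\bP[\|\xi_{n,i}\|>K]$ is a fixed, vanishing Gaussian tail. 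On $\{\|\Delta_{n,i}J\|>\delta\}$, using $\bP[\|\Delta_{n,i}J\|>\delta\mid\cF_{n,i-1}]\le\delta^{-2}\Delta t^n_i\int_{\RO}\|e\|^2\nu(\od e)$ and the conditional independence of $\beta_{n,i}$ and $\Delta_{n,i}J$, the contribution is $\lesssim\delta^{-2}|\Pi_n|T\to0$. On the remaining event (all of $\|\eta^H_{n,i}\|,\|\xi_{n,i}\|\le K$ and $\|\Delta_{n,i}J\|,\|\Delta_{n,i}W\|\le\delta$) one has $\|\lambda_{n,i}\|+\|\beta_{n,i}\|\le C_K\delta$, so $\omega_f(C_K\delta)$ is arbitrarily small for $\delta\downarrow0$ and fixed $K$, while $\sum_i\bE[\|\beta_{n,i}\|^2\mid\cF_{n,i-1}]\le(D+D^2)T$; and on $\{\|\Delta_{n,i}W\|>\delta\}$ (with $\|\eta^H_{n,i}\|,\|\xi_{n,i}\|\le K$) the Gaussian tail bound $\bE[\|\Delta_{n,i}W\|^2\1_{\{\|\Delta_{n,i}W\|>\delta\}}]\le\Delta t^n_i\,\bE[\|\zeta\|^2\1_{\{\|\zeta\|>\delta/\sqrt{|\Pi_n|}\}}]$ ($\zeta\sim\cN(0,I_D)$) makes the contribution vanish as $n\to\infty$. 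Choosing $K$ large, then $\delta$ small, then $n\to\infty$, one obtains $\sum_i|\mathrm{Rem}_{n,i}|\to0$ in $\bfL_1(\bP)$; likewise $\sum_i|r_{n,i}|\le\rho(|\Pi_n|)\,T\to0$. Hence $\sup_{t\le T}\big|A^{n,f}_t-\int_0^t\cL_X f(\bmZ^n_{s-})\od s\big|\to0$ in $\bfL_1(\bP)$.

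\textbf{Tightness and passage to the limit.} From $\|\eta^H_{n,i}\pr\Delta_{n,i}W\|^2=\|\eta^H_{n,i}\|^2\|\Delta_{n,i}W\|^2$, $\bE[\|\eta^H_{n,i}\|^2\mid\cF_{n,i-1}]=D$, $0\le\psi(e)\le\|\scrD\psi\|_\infty\|e\|$ and \cref{assum:SDE-coefficients}(a) one gets $\bE[\|\Delta_{n,i}\bmZ^n\|^2\mid\cF_{n,i-1}]\le C\Delta t^n_i$; Doob's inequality then gives $\sup_n\bE[\sup_{t\le T}\|\bmZ^n_t\|^2]<\infty$ (compact containment) and, by optional sampling, $\bE\|\bmZ^n_{(\tau+\theta)\wedge T}-\bmZ^n_\tau\|^2\le C(\theta+|\Pi_n|)$ for every stopping time $\tau\le T$ and every $\theta>0$, so Aldous's tightness criterion applies and $(\bmZ^n)$ is tight. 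Along any subsequence with $\bmZ^n\Rightarrow\bar X$: $\bar X_0=0$; writing $\wt M^{n,f}_t:=f(\bmZ^n_t)-f(0)-\int_0^t\cL_X f(\bmZ^n_{s-})\od s$, which is uniformly bounded, and using the previous step ($\sup_t|M^{n,f}_t-\wt M^{n,f}_t|\to0$ in $\bfL_1(\bP)$), one passes to the limit in the martingale identity $\bE[(M^{n,f}_t-M^{n,f}_s)\Phi(\bmZ^n|_{[0,s]})]=0$ ($\Phi$ bounded continuous, $s<t$) via the standard Skorokhod-continuity properties of the functionals $x\mapsto f(x(t))$ and $x\mapsto\int_0^t\cL_X f(x(s-))\od s$ (a.s.\ continuous at the L\'evy path $\bar X$), obtaining that $f(\bar X_t)-f(0)-\int_0^t\cL_X f(\bar X_s)\od s$ is a martingale for all $f\in C^\infty_c(\bR^{\bfD})$, i.e.\ $\bar X$ solves the martingale problem for $(\cL_X,\delta_0)$. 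This problem being well posed (a L\'evy generator), $\bar X\stackrel{d}{=}X$; as every subsequential limit has this law, $\bmZ^n\Rightarrow X=\vect(W,\cW,L^\psi)$, and the direct-sum structure of $\cL_X$ yields $W\perp\cW$ and $L^\psi\perp(W,\cW)$, which completes the proof.
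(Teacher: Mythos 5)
Your proposal is correct in substance but follows a genuinely different route from the paper. The paper verifies the hypotheses of the semimartingale limit theorem \cite[Theorem VIII.2.29]{JS03}: it proves convergence of the three predictable characteristics of $\bmZ^n$ (drift relative to a truncation function, modified second characteristic, and the jump measure tested against $C_2$-functions), all reduced to a single workhorse estimate (\cref{lemm:convergence-jump-part}) for a bespoke class $C^2_*(\bR^{\bfD})$ of test functions, established via It\^o's formula applied to $g_{\eta,\xi}(w,j)=g(w,\eta\pr w,j,\psi(j)\xi)$ and dominated convergence; tightness then comes for free from the Jacod--Shiryaev theorem. You instead run the Stroock--Varadhan/Ethier--Kurtz scheme: Aldous tightness plus identification of every subsequential limit as the unique solution of the martingale problem for the L\'evy generator $\cL_X$ on $C^\infty_c$. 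The core computations are essentially the same in both approaches --- your conditional second-moment identity $\bE[\beta_{n,i}\beta_{n,i}^\tran\mid\cF_{n,i-1}]=\Delta t^n_i\,\bar A$ is the paper's \cref{lemm:convergence-continuous-part}, your $\bE[g_y(\Delta_{n,i}J)]$ term is the paper's \cref{lemm:convergence-jump-part}/\cref{limit-jump-part}, and \cref{assumption-eta} enters in exactly the same place (to dominate the $\|\eta^H_{n,i}\|^2\|\Delta_{n,i}W\|^2$ contributions via uniform integrability; your multi-level truncation of $\mathrm{Rem}_{n,i}$ plays the role of the paper's Step 4). What your route buys is that only smooth compactly supported test functions are needed and the limit is identified intrinsically through its generator; what it costs is a separate tightness argument and an appeal to well-posedness of the martingale problem, which the characteristics approach packages into one citation. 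Two small points to tighten: (i) the jump integral in $\cL_X f$ is only conditionally convergent over $\nu(\od e)\,\varphi_D(u)\od u$ (since $\int_{\RO}\psi(e)\,\nu(\od e)$ may be infinite), so it must be read as the iterated integral with the $u$-integration performed first, exactly as in your $g_y$ construction --- worth stating explicitly; (ii) your argument determines the \emph{law} of the limit as that of three independent components, but the theorem also asserts that the \emph{given} $L^\psi$ is independent of the \emph{given} $(W,\cW)$ on the common probability space (needed so that $\vect(W,\cW,L^\psi)$ actually has the identified law); this requires the separate elementary fact that a purely non-Gaussian L\'evy process is independent of a Gaussian one (the paper's \cref{lemm:independence-BM-Levy}), which your write-up does not address.
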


By rearranging components of $\cW$, we may consider $\cW$ as an $\bR^{D\times D}$-valued process. Then \cref{thm:limit-distribution-integrator} suggests that the exploratory SDE in the continuous-time setting for the controlled wealth process $X^H$ with an admissible $H$ is as follows
\begin{align}\label{eq:exploratory-SDE-jump}
	\od X^H_t  = (\mu^H_{t})^\tran b(Y_{t-}) \od t & + (\mu^H_{t})^\tran a(Y_{t-}) \od W_t + \Tr[(\Theta^H_t)^{\frac{1}{2}} a(Y_{t-}) \od \cW_t^\tran] \notag \\
	& + \int_{\RO \times \bR^D} H_{t} \bb(\frac{u}{\psi(e)}\bb)^\tran \gamma(Y_{t-}) e\, \wt N_L^\psi(\od t, \od e, \od u), \quad X^H_0 = x_0 \in \bR,
\end{align}
where $\wt N^\psi_L$ is the compensated Poisson random measure of $L^\psi$ and the underlying process $Y$ is given by
\begin{align*}
	\od Y_t = b(Y_{t-})\od t  + a(Y_{t-}) \od W_t +  \gamma(Y_{t-}) \int_{\RO \times \bR^D} e \wt N^\psi_L(\od t, \od e, \od u), \quad Y_0 = y_0 \in \bR^D.
\end{align*}
One notices that such a $Y$ also admits $\cL_Y$ in \eqref{eq:generator-Y} as the generator.

\begin{rema}\label{rema:on-exploratory-SDE} Let us briefly comment on SDE \eqref{eq:exploratory-SDE-jump}. For the Brownian component, the noise caused by exploration, i.e. $\cW$, is completely separated from the original noise, i.e. $W$. While for the jump part, both noises are simultaneously captured by the Poisson random measure generated by a $D^2$-dimensional L\'evy process. Interestingly, for the optimal control $H$ obtained in \eqref{eq:optimal-stochastic-control}, it turns out that one can completely separate these two sources of randomness due to the linearity with respect to the exploration variable.
\end{rema}

\begin{rema}
	If the control enters in the drift part only, then the reasoning in \Cref{sec:dift-part} shows that there is no extra exploration noise in the continuous-time formulation. This is the case, e.g., in \cite{GHZ23} where the authors add jumps as uncontrolled L\'evy noise. 
\end{rema}

\begin{rema}
	We briefly explain the relation of the jump part in \eqref{eq:exploratory-SDE-jump} to the notion of a \emph{relaxed Poisson measure}, which has been introduced in the context of relaxed controls by  \cite{Ku00}. We will restrict our discussion to the finite activity case (as assumed in \cite{Ku00}), in which we do not need to re-scale the Gaussian jumps and we can formally set $\psi\equiv 1$. Then, 
	$$
	\widetilde N^\psi_L(\od t, \od e, \od u)=\bbm(\od t, \od e, \od u)-\nu(\od e) \varphi_D(u)\od u \od t
	$$
	for the random measure $\bbm$ defined in \eqref{eq:jump_measure_finite}. Hence, $\bbm$ is a relaxed Poisson measure for the relaxed control  $\varphi_D(u)\od u \od t$ in the sense of \cite[p. 190]{Ku00}. The approximation $\bbm_n$, which we consider in \eqref{eq:jump_measure_finite_MC}, arises from the modeling of control randomization in RL. Numerically it can be interpreted as a Monte Carlo approximation of the relaxed control $\varphi_D(u)\od u \od t$ on the time grid $t_0^n,\ldots, t_n^n$. This Monte Carlo  approximation is conceptually different to the numerical approximation considered by Kushner in \cite{Ku00}. Roughly speaking, Kushner's approach would approximate the relaxed control by replacing the multivariate Gaussian distribution by a discrete distribution supported on $N$ points $\alpha_1,\ldots, \alpha_N$ and successively calling these $N$ points over a refined time grid. 
\end{rema}

\section{Entropy-regularized exploratory MV problem with L\'evy jumps}\label{sec:entropy-exploratory-MV}


We work on a fixed complete probability space $(\Omega, \cF, \bP)$ carrying the triplet $(W, \cW, L^\psi)$ aforementioned in \Cref{thm:limit-distribution-integrator}. Let $N^\psi_L$ denote the associated Poisson random measure of $L^\psi$ with the compensation $\wt N^\psi_L : = N^\psi_L - \Leb_1 \otimes \nu_L^\psi$, where $\Leb_1$ is the $1$-dimensional Lebesgue measure. For $0 \le t \le s \le T$, we denote $\cF_s^t = \sigma\{W_r - W_t, \cW_r-\cW_t, L^\psi_r - L^\psi_t: t\le r\le s\}$ augmented by all $\bP$-null sets. Set $\cF_s: = \cF^0_s$.

\smallskip

For $\xi \sim \cN(0, I_D)$ we define the family of \textit{deterministic} admissible functions as
\begin{align*}
	\cA : = \bb\{ F \;\bb|\; F \colon \bR^D \to \bR^D  \; \mbox{ Borel, } \int_{\bR^D} \|F(u)\|^2 \varphi_D(u)\od u < \infty, \; F(\xi) \mbox{ has a probability density } p^F\bb\}.
\end{align*}

Admissible controls in the discrete-time setting are adapted to the continuous-time setting as follows.

\begin{defi}[Admissible control]\label{defi:admissible-control}
	For $(t, y) \in [0, T) \times \bR^D$, denote by $\cA(t, y)$ the family of all admissible controls $H$ for which the following conditions hold:
	\begin{enumerate}[\quad (H1)]
		\itemsep2pt
		\item  \label{adm-control-predictability} (Admissibility) $H\colon [t, T] \times \Omega \times \bR^D \to \bR^D$ satisfies that
		\begin{enumerate}[(a)]
			\itemsep2pt
			\item $H$ is $\cP([t, T]) \otimes \cB(\bR^D)$-measurable, where $\cP([t, T])$ is the predictable $\sigma$-algebra on $[t, T] \times \Omega$;
			
			\item $H_s(\cdot): = H_s(\omega; \cdot) \in \cA$ for all $(s, \omega) \in [t, T] \times \Omega$.
		\end{enumerate} 
		
		\item \label{adm-control-integrability}  (Integrability)  It holds that 
		\begin{align}\label{adm-control-omegawise-integrability}
			\bP\bb(\int_t^T \nq \int_{\bR^D} \|H_s(u)\|^2 \varphi_D(u) \od u < \infty\bb) =1,
		\end{align}
		and that processes $\mu^H = (\mu^H_s)_{s \in [t, T]}$, $\Theta^H = (\Theta^H_s)_{s \in [t, T]}$ defined on $[t, T] \times 
		\Omega$ by
		\begin{align*}
			\mu^H_s  : = \int_{\bR^D} H_s(u) \varphi_D(u) \od u, \quad \wt H_s(u): = H_s(u) - \mu^H_s, \quad	\Theta^H_s  : = \int_{\bR^D} \wt H_s(u)  \wt H_s(u)^\tran \varphi_D(u) \od u,
		\end{align*}
		satisfy that
		\begin{align}\label{admissible:integrability}
			 \bE\bb[ \int_t^T \bb((\mu^H_s)^{\tran} A(Y^{t,y}_{s-}) \mu^H_s + \Tr[A(Y^{t,y}_{s-}) \Theta^H_s]  & + \int_{\RO \times \bR^D} |H_s(u)^\tran \gamma(Y^{t, y}_{s-}) e|^2 \nu(\od e) \varphi_D(u)\od u\bb) \od s\bb]  \notag \\
			&  + \bE\bb[\bb|\int_t^T |(\mu^H_s)^\tran b(Y^{t, y}_{s-})| \od s\bb|^2 \bb] < \infty,
		\end{align}
		where $Y^{t,y} = (Y^{t, y}_s)_{s \in [t, T]}$ is a (unique) strong solution to the following SDE on $[t, T]$
		\begin{align}\label{eq:SDE-Y-at-t}
			\od Y^{t,y}_s &  = b(Y^{t,y}_{s-})\od s  + a(Y^{t,y}_{s-}) \od W_s +  \gamma(Y^{t,y}_{s-}) \int_{\RO \times \bR^D} e \wt N^\psi_L(\od s, \od e, \od u),\quad  Y^{t,y}_t=y.
		\end{align}
		
		\item\label{admissible:entropy} (Finite accumulative differential entropy) There is a kernel $p^H\colon [t, T] \times \Omega \times \bR^D \to \bR$ such that $p^H_s(\omega; \cdot)$ is a probability density function of $H_s(\omega; \zeta)$ for any $(s, \omega) \in [t, T] \times \Omega$, where $\zeta \sim \cN(0, I_D)$ is independent of $\cF^t_s$, and that $(s, \omega) \mapsto \int_{\bR^D} p^H_s(u) \log p^H_s(u) \od u$ is $(\cF^t_s)_{s \in [t, T]}$-predictable with
		\begin{align}\label{eq:admissible:entropy}
			\bE\bb[ \int_t^T \bb| \int_{\bR^D} p^H_s(u) \log p^H_s(u)\od u \bb| \od s\bb] <\infty.
		\end{align}
	\end{enumerate}
\end{defi}

For a given control $H \in \cA(t, y)$ and $x \in \bR$, the dynamic of the controlled wealth process $X^{t, x, y; H} = (X^{t, x, y; H}_s)_{s \in [t, T]}$, which is assumed to has c\`adl\`ag paths, is described by the exploratory SDE on $[t, T]$ as
\begin{align}\label{eq:couple-wealth-price}
		\od X^{t, x, y; H}_s  & =  (\mu^H_s)^\tran b(Y^{t, y}_{s-}) \od s + (\mu^H_s)^\tran a(Y^{t, y}_{s-}) \od W_s + \Tr[(\Theta^H_s)^{\frac{1}{2}} a(Y^{t, y}_{s-}) \od \cW_s^\tran] \notag \\
		& \quad + \int_{\RO \times \bR^D} H_s \bb(\dfrac{u}{\psi(e)}\bb)^\tran \gamma(Y^{t, y}_{s-}) e\, \wt N_L^\psi(\od s, \od e, \od u), \quad
		X^{t, x, y; H}_t  = x,
\end{align}
where $Y^{t,y}$ solves the SDE \eqref{eq:SDE-Y-at-t}.

\begin{rema}\label{rema:admissible-control}
	\begin{enumerate}
		\itemsep2pt

		\item Processes $\mu^H$ and $\Theta^H$ are predictable by  (H\ref{adm-control-predictability}) and Fubini's theorem.
		
		\item As a consequence of \cite[Theorem X.1.1]{Bh97}, there exists a $c_D>0$ such that	$\|A^{\frac{1}{2}} - B^{\frac{1}{2}}\| \le c_D \|A - B\|^{\frac{1}{2}}$ for any $A, B \in \bS^D_+$. Hence $\bS^D_{+} \ni A \mapsto A^{\frac{1}{2}}$ is (H\"older) continuous which then  ensures that $(\Theta^H)^{\frac{1}{2}}$ is also a predictable $\bS^D_+$-valued process.
		
		\item Due to \eqref{admissible:integrability}, $X^{t, x, y; H}$ in \eqref{eq:couple-wealth-price} is a square integrable process satisfying 
		\begin{align}\label{eq:integrability-admissible-H}
			\bE\bb[\sup_{t \le s \le T} |X^{t, x, y; H}_s|^2\bb] <\infty.
		\end{align}
	\end{enumerate}
\end{rema}

\subsection{Problem formulation}\label{subsec-exploratory-MV-new}

We are now in a position to formulate the entropy-regularized exploratory MV problem. Remark that, due to the time inconsistency of the MV problem, we just examine solutions among \textit{precommitted} strategies which are optimal at $t = 0$ only.

Let us fix a $\hat z \in \bR$ which represents the targeted expected terminal wealth. For an initial wealth $x_0 \in \bR$ and $y_0 \in \bR^D$, we consider the problem
 \begin{equation}\label{exploratory-MV-constraint}
	\left\{ \begin{aligned} 
		&\min_{H \in \cA(0, y_0)} \bE\bb[\B(X^{0, x_0, y_0;H}_T - \bE \B[X^{0, x_0, y_0;H}_T\B] \B)^2 + \lambda \int_0^T \nq \int_{\bR^D} p^H_s(u) \log p^H_s(u) \od u \od s\bb] \\
		& \mbox{subject to } X^{0, x_0, y_0; H} \mbox{ given in } \eqref{eq:couple-wealth-price} \mbox{ and }\bE\B[X^{0, x_0, y_0;H}_T\B] = \hat{z}.
	\end{aligned} \right.
\end{equation}
Here the \textit{exploration weight} $\lambda\ge 0$, which is fixed from now on, describes the trade-off between exploitation and exploration and it is also known as the \textit{temperature parameter} in the RL literature.

We follow \cite{WZ20} to apply the Lagrange multiplier method to solve  \eqref{exploratory-MV-constraint} (see \cref{sec:classical-MV} for a similar argument in the setting without exploration). In the first step, we examine the following entropy-regularized quadratic-loss minimization problem, parameterized by $\hat{w} \in \bR$,
\begin{equation}\label{exploratory-MV-unconstraint}
	\left\{ \begin{aligned} 
		&\min_{H \in \cA(0, y_0)} \bE\bb[\B(X^{0, x_0, y_0;H}_T - \hat{w} \B)^2 + \lambda \int_0^T \nq \int_{\bR^D} p^H_s(u) \log p^H_s(u) \od u \od s\bb] \\
		& \mbox{subject to } X^{0, x_0, y_0; H} \mbox{ given in } \eqref{eq:couple-wealth-price}.
	\end{aligned} \right.
\end{equation}
 We solve \eqref{exploratory-MV-unconstraint} to obtain a solution $H^*: = H^*(\hat{w})$ depending on $\hat{w}$. This task is presented in \cref{entropy-regularized quadratic-loss}. In the next step, we find the Lagrange multiplier $\hat{w}$  by using the constraint $\bE[X_T^{H^*}] = \hat{z}$. Then $H^*(\hat{w})$ is a solution to problem \eqref{exploratory-MV-constraint} where $\hat{w}$ is the obtained Lagrange multiplier. The latter task is done in \cref{sec:explicit-solutions}.

\subsection{The entropy-regularized quadratic-loss optimization problem}\label{entropy-regularized quadratic-loss}
Let us fix $\hat{w} \in \bR$.  Problem \eqref{exploratory-MV-unconstraint} is an unconstrained control problem and we will find its solutions via the dynamic programing approach. Define the function $V^H(\cdot|\hat{w})$ associated with a control $H \in \cA(t, y)$ and $x\in \bR$ by setting 
\begin{align*}
	V^H(t, x, y|\hat{w}): = \bE\bb[ \B(X^{t, x, y;H}_T - \hat{w}\B)^2 + \lambda \int_t^T \nq \int_{\bR^D} p^H_s(u) \log p^H_s(u) \od u \od s \bb].
\end{align*}
We consider the following system of problems which particularly yields to \eqref{exploratory-MV-unconstraint} when $(t, x, y) = (0, x_0, y_0)$.
\begin{prob}\label{prob:MV-entropy}
	For given $(t, x, y) \in [0, T) \times \bR \times \bR^D$, find an $H^* \in \cA(t, y)$ such that 
	\begin{align}\label{eq:MV-problem-formulation}
	V^*(t, x, y|\hat{w}):=	V^{H^*}(t, x, y|\hat{w}) = \min_{H \in \cA(t, y)} V^{H}(t, x, y|\hat{w})
	\end{align}
	subject to the state equation \eqref{eq:couple-wealth-price}.
\end{prob}

\begin{defi} For a given initial triple $(t, x, y)$, any $H^* \in \cA(t, y)$ satisfying \eqref{eq:MV-problem-formulation} is called an \textit{optimal control}, the corresponding controlled state process $X^{t, x, y;*}: = X^{t, x, y;H^*}$ is called an \textit{optimal state/wealth process}, and $V^*(\cdot|\hat{w})$ satisfying the terminal condition $V^*(T, x, y|\hat{w}) = (x-\hat{w})^2$ is called the \textit{value function}.
\end{defi}

\subsubsection{Entropy-regularized Hamilton--Jacobi--Bellman (HJB) equation}
As we use the dynamic programming approach to solve \cref{prob:MV-entropy}, it is useful to consider the associated HJB equation. Let us first introduce some notations. For $F \in \cA$, we define $m^F \in \bR^D$ and $\theta^F \in \bS^D_+$ by
\begin{align*}
	m^F &: = \int_{\bR^D} F(u) \varphi_D(u) \od u,\\
	\theta^F &:= \int_{\bR^D} (F(u)-m^F) (F(u)-m^F)^\tran \varphi_D(u) \od u = \int_{\bR^D} F(u) F(u)^\tran \varphi_D(u) \od u - m^F (m^F)^\tran,
\end{align*}
and the \textit{differential entropy} of $F$ is denoted by
\begin{align*}
	\ent(F)  := -\int_{\bR^D} p^F(u) \log p^F(u) \od u.
\end{align*}
Using the classical Bellman's principle of optimality and a standard verification argument (see the proof of \cref{thm:verification} below) we find that the HJB type formula in our setting is stated in form of a (possibly degenerate) second-order PIDE as follows:
\begin{align}\label{eq:HJB}
	0 &= \pd_t v(t, x, y) + b(y)^\tran \scrD_y v(t, x, y) + \frac{1}{2} \Tr[A(y) \scrD^2_{yy} v(t, x, y) ] \notag \\
		 &\quad + \min_{F \in \cA} \bb\{  \frac{1}{2} \pd^2_{xx} v(t, x, y)\B((m^F)^\tran A(y) m^F + \Tr[A(y)\theta^F]\B) \notag \\
		 & \quad \qquad \qquad + (m^F)^\tran \B( A(y) \scrD^2_{xy} v(t, x, y) +  \pd_x v(t, x, y) b(y) \B) \notag\\
		  &\quad \qquad \qquad + \int_{\RO \times \bR^D} \B(v(t, x + F(u)^\tran \gamma(y) e, y + \gamma(y) e) - v(t, x, y) \notag \\
		 &\hspace{120pt}  - \pd_x v(t, x, y) F(u)^\tran \gamma(y) e  - \scrD_y v(t, x, y)^\tran \gamma(y)e \B) \nu(\od e) \varphi_D(u) \od u \notag \\
		 & \quad \qquad  \qquad  -  \lambda \ent(F)\bb\}, \quad (t, x, y) \in [0, T) \times \bR \times 
		  \bR^D,
\end{align}
with the terminal condition $v(T, x, y) = (x-\hat{w})^2$ for $(x, y) \in \bR \times \bR^D$.

\begin{rema}\label{rema:min-entropy} By \cite[Theorem 8.6.5]{CT06}, one has $\ent(F) = - \infty$ if $\det(\theta^F) = 0$. Hence, it suffices to consider the above minimization over $F \in \cA$ with $\det(\theta^F)>0$, i.e. over $F \in \cA$ with $\theta^F \in \bS^D_{++}$.
\end{rema}

\begin{rema}
	 In the case of no jumps, i.e. $\nu = 0$, \eqref{eq:HJB}  simplifies to
	 \begin{align*}
	 	0 &= \pd_t v(t, x, y) + b(y)^\tran \scrD_y v(t, x, y) + \frac{1}{2} \Tr[A(y) \scrD^2_{yy} v(t, x, y) ] \notag \\
	 	&\quad + \min_{m \in \bR^D, \theta \in \bS^D_{++}} \bb\{  \frac{1}{2} \pd^2_{xx} v(t, x, y)\B(m^\tran A(y) m + \Tr[A(y)\theta]\B) \notag \\
	 	& \hspace{100pt} + m^\tran \B( A(y) \scrD^2_{xy} v(t, x, y) +  \pd_x v(t, x, y) b(y) \B)  \\
	 	& \hspace{100pt}  -\lambda\quad  \max_{ F\in \mathcal{A}; \;m^F=m \textnormal{ and } \theta^F=\theta} \quad \ent(F)\bb\}, \quad (t, x, y) \in [0, T) \times \bR \times 
	 	\bR^D.
	 \end{align*}
	 By the entropy-maximizing property of the Gaussian distribution, the maximum over $F$ is achieved at the linear function $F(u)=m+\theta^{\frac{1}{2}} u$ and the HJB-equation becomes a  
	 second-order PDE for the unknown value function $v$. This PDE can the be solved by a quadratic ansatz as done e.g. in \cite{WZ20} for the case $D = 1$  and constant coefficients. In the presence of jumps, this ``separation argument'' (solving first for $F$ given its first two moments) a priori does not work anymore, because $F$ explicitly enters the integral term of the HJB-PIDE \eqref{eq:HJB}. As we will show in the next section,  \eqref{eq:HJB} can nonetheless be solved by a quadratic ansatz, and, then, the separation step can be performed a posteriori, leading to the Gaussianity of the optimal control law. 
\end{rema}

\subsubsection{Quadratic ansatz}\label{sec:ansatz}  

We first introduce the following function classes in relation to the coefficient $\gamma$ and L\'evy measure $\nu$.

\begin{defi}\label{defi:Upsilon-class} For a Borel function $g \colon [0, T] \times \bR^D \to \bR$ we let
	$g \in \Upsilon(0)$ (resp. $g \in \Upsilon(1)$, $g \in \Upsilon(2)$) if there exists a (jointly) continuous function $\Upsilon^{(0)}_g (\mbox{resp. } \Upsilon^{(1)}_g, \Upsilon^{(2)}_g) \colon [0, T] \times \bR^D \to [0, \infty)$ such that 
	\begin{align*}
		&\int_{\RO} |g(t, y + \gamma(y) e)|\, \|e\|^2 \nu(\od e) \le \Upsilon^{(0)}_g (t, y),\\
		\mbox{resp.} \quad &\int_{\RO} |g(t, y + \gamma(y) e) - g(t, y)| \,\|e\|\nu(\od e) \le \Upsilon^{(1)}_g(t, y),\\
		\mbox{resp.} \quad &\int_{\RO} \B|g(t, y + \gamma(y) e) - g(t, y) - \scrD_y g(t, y)^\tran \gamma(y) e\B|  \nu(\od e) \le \Upsilon^{(2)}_g(t, y),
	\end{align*}
for all $(t,y) \in [0, T]\times \bR^D$, where we additionally assume that $\scrD_y g$ exists and measurable for $g \in \Upsilon(2)$. Then $\Upsilon^{(k)}_g$ is called an $\Upsilon$-\textit{dominating function} of $g \in \Upsilon(k)$.
\end{defi}

\begin{rema}
A standard calculation shows that $g \in \Upsilon(0) \cap \Upsilon(1) \cap \Upsilon(2)$ if $\int_{\RO} \|e\|^2 \nu(\od e) <\infty$ and one of the following holds: 
		\begin{enumerate}[\quad (a)]
			\item $g$ is twice continuously differentiable with respect to $y$ with $$\sup_{(t, y) \in [0, T] \times \bR^D}(|g(t, y)| + \|\scrD_y g(t, y)\| + \|\scrD^2_{yy} g(t, y)\|) < \infty.$$
			
			\item $\sup_{(t, y) \in [0, T] \times \bR^D} |g(t, y)| <\infty$, $\scrD_y g$ is jointly continuous on $[0, T] \times \bR^D$, and $\nu(E)<\infty$.
		\end{enumerate}		
\end{rema}

For $\alpha \in \Upsilon(0) \cap \Upsilon(1)$, $\alpha >0$ on $[0, T]\times \bR^D$, and $\scrD_y \alpha$ exists, we define the functions $\scrM_\alpha\colon [0, T] \times \bR^D \to \bR^D$ and $\scrS_\alpha \colon [0, T] \times \bR^D \to \bS^D_{++}$ as
\begin{align}
	\scrM_\alpha(t, y) &: = \alpha(t, y) b(y) +  A(y) \scrD_y \alpha(t, y) + \gamma(y)  \int_{\RO} (\alpha(t, y + \gamma(y)e) - \alpha(t, y)) e \,\nu(\od e),\label{eq:defi:optimal-mean}\\
	\scrS_\alpha(t, y) & : = \alpha(t, y) A(y) + \gamma(y) \bb(\int_{\RO} \alpha(t, y + \gamma(y) e) e e^\tran  \nu(\od e)\bb) \gamma(y)^\tran\label{eq:defi:optimal-variance}.
\end{align}
In particular, if $\alpha \equiv 1$ on $[0, T] \times \bR^D$ then $\scrM_\alpha = b$ and $\scrS_\alpha = \Sigma$. One also remarks that the mapping $\scrS_\alpha$ is well-defined. Indeed, for any $(t, y) \in [0, T] \times \bR^D$ and $u \in \bR^D \backslash\{0\}$, one has $u^\tran \scrS_\alpha(t,y) u>0$ because of
 $\alpha>0$ and the non-degenerate condition (see \Cref{sec:setting}). As a consequence, the inverse $\scrS_\alpha^{-1}(t,y)$ exists and also belongs to $\bS^D_{++}$ which can be easily derived from the spectral decomposition of $\scrS_\alpha(t,y)$.

\begin{prop}[Quadratic value function]\label{prop:optimal-value-function}
	Let $\alpha, \beta \in C^{1,2}([0, T] \times \bR^D) \cap \Upsilon(2)$. Assume that $\alpha \in \Upsilon(0) \cap \Upsilon(1)$ and $\alpha>0$, and that $\alpha, \beta$ solve  the following system of PIDEs pointwise on $[0, T) \times \bR^D$,
	\begin{equation}\label{system:PDEs}
		\left\{ \begin{aligned} 
			&\pd_t \alpha(t, y) + \cL_Y \alpha(t,y)  - (\scrM_\alpha^\tran \scrS_\alpha^{-1} \scrM_\alpha)(t, y) =0, \\
			& \pd_t \beta(t, y) + \cL_Y \beta(t,y) - \frac{\lambda}{2} \log\bb( \frac{(\lambda \pi)^D}{\det(\scrS_\alpha(t, y))}\bb) = 0,\\
			&\alpha(T, \cdot) \equiv 1 \quad \mbox{and}\quad \beta(T, \cdot) \equiv 0,
		\end{aligned} \right.
	\end{equation}
where $\cL_Y \phi(t,y): = (\cL_Y  \phi(t, \cdot))(y)$ for $\phi \in \{\alpha, \beta\}$. Then, for $(t, x, y) \in [0, T] \times \bR \times \bR^D$, 
\begin{align}\label{eq:ansatz-value-function}
	v^{\opt}(t, x, y) := \alpha(t, y) (x- \hat{w})^2 + \beta(t, y)
\end{align}
solves the HJB equation \eqref{eq:HJB}. Moreover, a minimizer $F^\opt = F_\alpha^\opt(t, x, y; \lambda; \cdot) \in \cA$ is
	\begin{align}\label{eq:optimal-control-ansatz-F}
		F_\alpha^{\opt}(t, x, y; \lambda; u) = m^{\opt}_{\alpha}(t,x,y) + \theta^{\opt}_\alpha(t,y;\lambda)^{\frac{1}{2}} u,
	\end{align}
	where
	\begin{align}\label{minimizer-mean-variance}
		m^\opt_\alpha(t, x, y) := - (x- \hat{w}) (\scrS_\alpha^{-1} \scrM_\alpha)(t, y) \quad \mbox{and} \quad
		\theta^\opt_\alpha(t, y;\lambda) : = \frac{\lambda}{2} \scrS_\alpha^{-1}(t, y).
	\end{align}
\end{prop}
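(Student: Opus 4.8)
The plan is to verify directly that $v^{\opt}$ defined by the quadratic ansatz \eqref{eq:ansatz-value-function} satisfies the HJB equation \eqref{eq:HJB}, by carrying out the inner minimization over $F \in \cA$ explicitly. First I would substitute $v = v^{\opt}$ into the right-hand side of \eqref{eq:HJB}. The relevant derivatives are $\pd_x v^{\opt} = 2\alpha(x-\hat w)$, $\pd^2_{xx} v^{\opt} = 2\alpha$, $\scrD_y v^{\opt} = (x-\hat w)^2 \scrD_y\alpha + \scrD_y\beta$, $\scrD^2_{xy} v^{\opt} = 2(x-\hat w)\scrD_y\alpha$, and $\scrD^2_{yy} v^{\opt} = (x-\hat w)^2 \scrD^2_{yy}\alpha + \scrD^2_{yy}\beta$. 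The key structural observation is that, inside the bracket being minimized, the terms involving $F$ collect into a quadratic-plus-linear expression in the pair $(m^F, \theta^F)$ together with the single integral term $\int (v^{\opt}(t, x + F(u)^\tran\gamma(y)e, y+\gamma(y)e) - \cdots)\nu(\od e)\varphi_D(u)\od u$; because $v^{\opt}$ is quadratic in its first argument, expanding $v^{\opt}(t, x+F(u)^\tran\gamma(y)e, \cdot)$ produces only terms of order $0$, $1$ and $2$ in $F(u)$, so upon integrating against $\varphi_D(u)\od u$ everything reduces to expressions in $m^F$, $\theta^F$ and $\alpha(t, y+\gamma(y)e)$. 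Using the definitions \eqref{eq:defi:optimal-mean}–\eqref{eq:defi:optimal-variance} of $\scrM_\alpha$ and $\scrS_\alpha$, I expect the $F$-dependent part of the bracket to simplify (after completing the square) to
\begin{align*}
\alpha(t,y)\,\B(m^F + (x-\hat w)\scrS_\alpha^{-1}\scrM_\alpha\B)^\tran \scrS_\alpha \B(m^F + (x-\hat w)\scrS_\alpha^{-1}\scrM_\alpha\B) + \alpha(t,y)\Tr[\scrS_\alpha \theta^F] - (x-\hat w)^2 (\scrM_\alpha^\tran \scrS_\alpha^{-1}\scrM_\alpha)(t,y) - \lambda\,\ent(F),
\end{align*}
plus $F$-free terms that combine with $\pd_t v^{\opt} + \cL_Y v^{\opt}$.

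Next I would perform the minimization over $F$. By \cref{rema:min-entropy} we may restrict to $F$ with $\theta^F \in \bS^D_{++}$, and for such $F$ the entropy-maximizing property of the Gaussian gives $\ent(F) \le \frac12\log\big((2\pi e)^D \det(\theta^F)\big)$ with equality iff $F(u) = m^F + (\theta^F)^{1/2}u$. Since the remaining $F$-dependence is through $m^F$ and $\theta^F$ alone and is \emph{increasing} in $-\ent(F)$, minimizing first reduces to the Gaussian case; then minimizing over $m^F$ gives $m^F = m^\opt_\alpha = -(x-\hat w)\scrS_\alpha^{-1}\scrM_\alpha$ (killing the quadratic form), and minimizing $\alpha\Tr[\scrS_\alpha\theta^F] - \frac\lambda2\log\big((2\pi e)^D\det\theta^F\big)$ over $\theta^F \in \bS^D_{++}$ is a smooth convex-type problem whose stationarity condition $\alpha\,\scrS_\alpha = \frac\lambda2 (\theta^F)^{-1}$ yields $\theta^\opt_\alpha = \frac{\lambda}{2\alpha}\scrS_\alpha^{-1}$; one should double-check the normalization of $\theta^\opt_\alpha$ against the statement, since the form \eqref{minimizer-mean-variance} written there carries no $\alpha$ in the denominator — this discrepancy presumably disappears because $\scrS_\alpha$ already contains a factor $\alpha$ in a way I would need to track carefully, or the $\alpha$ is absorbed; resolving this bookkeeping is the first place to be careful. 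Substituting the optimal $\theta^\opt_\alpha$ back produces exactly the term $-\frac\lambda2\log\big((\lambda\pi)^D/\det\scrS_\alpha\big)$ appearing in the second PIDE of \eqref{system:PDEs}, after using $\log\det(\theta^\opt_\alpha)$ and collecting constants.

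Finally I would assemble the pieces: plugging the minimizing $F^\opt$ back, the bracket equals $-(x-\hat w)^2(\scrM_\alpha^\tran\scrS_\alpha^{-1}\scrM_\alpha) - \frac\lambda2\log\big((\lambda\pi)^D/\det\scrS_\alpha\big)$ plus contributions that merge with $\pd_t v^{\opt} + b^\tran\scrD_y v^{\opt} + \frac12\Tr[A\scrD^2_{yy}v^{\opt}]$ and the $F$-free part of the integral term. Separating the coefficient of $(x-\hat w)^2$ from the constant term, the equation \eqref{eq:HJB} splits into exactly the two scalar PIDEs of \eqref{system:PDEs} for $\alpha$ and $\beta$ — here one uses that $\cL_Y$ applied to $(x-\hat w)^2\alpha(t,y) + \beta(t,y)$ acts only on the $y$-variable and that the $F$-free jump contributions reconstruct the nonlocal part of $\cL_Y\alpha$ and $\cL_Y\beta$ via \eqref{eq:defi:optimal-mean}. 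The terminal conditions match since $\alpha(T,\cdot)\equiv1$, $\beta(T,\cdot)\equiv0$ give $v^{\opt}(T,x,y) = (x-\hat w)^2$. It remains to check $F^\opt_\alpha(t,x,y;\lambda;\cdot) \in \cA$: it is affine with positive-definite linear part $\theta^\opt_\alpha{}^{1/2}$, hence $F^\opt(\xi)$ is nondegenerate Gaussian with a density, and the $\varphi_D$-square-integrability is immediate. The main obstacle I anticipate is the matrix bookkeeping in the jump/integral term: carefully expanding the quadratic $v^{\opt}$ inside the nonlocal operator, integrating out $u$ against $\varphi_D$, and recognizing that the resulting combination of $\alpha(t,y+\gamma(y)e)$-weighted integrals is precisely what is needed to complete the square with $\scrM_\alpha, \scrS_\alpha$ and simultaneously rebuild $\cL_Y\alpha$ — this is where the definitions \eqref{eq:defi:optimal-mean}–\eqref{eq:defi:optimal-variance} must be used with exactly the right groupings, and where a sign or a factor is most likely to go wrong.
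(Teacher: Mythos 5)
Your proposal is correct and follows essentially the same route as the paper: plug in the quadratic ansatz, split the minimization over $F$ into separate minimizations over $m^F$ and $\theta^F$ (using that any pair $(m,\theta)$ with $\theta\in\bS^D_{++}$ is realized by an affine $F$), invoke the Gaussian entropy-maximization to reduce to $F(u)=m+\theta^{\frac{1}{2}}u$, and solve the two first-order conditions before matching coefficients of $(x-\hat w)^2$ and the constant term with the two PIDEs. The bookkeeping discrepancy you flag resolves exactly as you conjecture: by \eqref{eq:defi:optimal-variance} the weights $\alpha(t,y)$ and $\alpha(t,y+\gamma(y)e)$ are already built into $\scrS_\alpha$, so the $\theta$-dependent part of the bracket is $\Tr[\scrS_\alpha\theta^F]$ with no extra prefactor $\alpha$, the stationarity condition reads $\scrS_\alpha=\frac{\lambda}{2}(\theta^F)^{-1}$ giving $\theta^\opt_\alpha=\frac{\lambda}{2}\scrS_\alpha^{-1}$ as in \eqref{minimizer-mean-variance}, and likewise the $m$-part is $(m^F)^\tran\scrS_\alpha m^F+2(x-\hat w)(m^F)^\tran\scrM_\alpha$ without the leading $\alpha$ in front of the completed square.
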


\begin{proof} One first notices that $\cL_Y \alpha(t, \cdot)$ and $\cL_Y \beta(t, \cdot)$ are well-defined functions for $t \in [0, T]$. 
To simplify the presentation, we omit the argument $y$ of coefficient functions $b, A, \gamma$, and for fixed $(t, y)$, we formally use the following notations for $\alpha$ (and analogously for $\beta$),
\begin{align*}
	\alpha : = \alpha(t, y),\;\; \tilde \alpha(e): = \alpha(t, y + \gamma(y) e),\;\; \cL_Y \alpha: = \cL_Y \alpha(t, y).
\end{align*}
Plugging the ansatz \eqref{eq:ansatz-value-function} into the HJB equation \eqref{eq:HJB} and rearranging terms we get the following which holds pointwise on $[0, T) \times \bR \times \bR^D$,
\begin{align}\label{eq:HJB-ansatz}
0&= (x-\hat{w})^2 (\pd_t \alpha + \cL_Y \alpha)  + (\pd_t \beta + \cL_Y\beta) \notag\\
&\quad +  \min_{F \in \cA,\, \theta^F \in \bS^D_{++}}\bb\{ \alpha \B((m^F)^\tran A m^F + \Tr[A\theta^F]\B) + 2 (x-w)  (m^F)^\tran (A \scrD_y \alpha  + \alpha b)\notag \\
& \quad  + \int_{\RO} \B( \tilde\alpha(e) e^\tran \gamma^\tran (\theta^F + m^F (m^F)^\tran) \gamma e + 2 (x-w) (\tilde\alpha(e) - \alpha) (m^F)^\tran \gamma e\B) \nu(\od e) - \lambda \ent(F)\bb\},
\end{align}
where the minimization is taken over $F\in \cA$ with $\theta^F \in \bS^D_{++}$ due to \cref{rema:min-entropy}. Remark that given any $m \in \bR^D$, $\theta \in \bS^D_{++}$, there always exists an $F \in \cA$ such that $m^F = m$ and $\theta^F = \theta$, for example, one might take $F(u) = m + \theta^{\frac{1}{2}} u$. Then the minimum over $F \in \cA$ with $\theta^F \in \bS^D_{++}$ in \eqref{eq:HJB-ansatz} can be separated into two individual minimization problems, one is over $m^F \in \bR^D$ and the other is over $\theta^F \in \bS^D_{++}$. Specifically, let $\Psi_{\eqref{eq:HJB-ansatz}}^F$ denote the expression inside the minimum in \eqref{eq:HJB-ansatz}, then one has
\begin{align}\label{eq:min-F-separate}
	\min_{F \in \cA,\, \theta^F \in \bS^D_{++}} \Psi_{\eqref{eq:HJB-ansatz}}^F & = \min_{m \in \bR^D} \bb\{\alpha m^\tran A m  + 2(x-w)   m^\tran  (A \scrD_y \alpha + \alpha b) \notag\\
	& \qquad \qquad + \int_{\RO} \B(\tilde\alpha(e)  (m^\tran \gamma e)^2 + 2(x-w)(\tilde\alpha(e) - \alpha)m^\tran \gamma e \B) \nu(\od e)\bb\} \notag \\
	& \quad + \min_{\theta \in \bS^D_{++}} \bb\{ \alpha \Tr[A \theta] + \int_{\RO} \tilde\alpha(e) e^\tran \gamma^\tran \theta \gamma e \,\nu(\od e) - \lambda \max_{F \in \cA,\, \theta^F = \theta}\ent(F) \bb\} \notag\\
	& =: \min_{m \in \bR^D} f_{\eqref{eq:min-F-separate}}(m) + \min_{\theta \in \bS^D_{++}} g_{\eqref{eq:min-F-separate}}(\theta).
\end{align}
 It is known that the differential entropy  is translation invariant and it is maximized over all distributions with a given covariance matrix by Gaussian distribution, see, e.g., \cite[Theorem 8.6.5]{CT06}. Hence,  $g_{\eqref{eq:min-F-separate}}$ can be expressed as
 \begin{align*}
 	g_{\eqref{eq:min-F-separate}}(\theta) = \alpha \Tr[A \theta] +  \int_{\RO} \tilde\alpha(e) e^\tran \gamma^\tran \theta \gamma e \, \nu(\od e)  - \frac{\lambda}{2} \log(\det(\theta)) - \frac{\lambda D}{2}\log(2\pi \e).
 \end{align*}
 Combining \eqref{eq:HJB-ansatz} with \eqref{eq:min-F-separate} yields the equation
\begin{align}\label{eq:HJB-particular-f-g}
	 (x-\hat{w})^2 (\pd_t \alpha + \cL_Y \alpha)  + (\pd_t \beta + \cL_Y\beta) + \min_{m \in \bR^D} f_{\eqref{eq:min-F-separate}}(m) + \min_{\theta \in \bS^D_{++}} g_{\eqref{eq:min-F-separate}}(\theta) =0.
\end{align}
We first consider the minimization problem
\begin{align*}
	\min_{\theta \in \bS^D_{++}} g_{\eqref{eq:min-F-separate}}(\theta).
\end{align*}
By vectorization, $\bS^D_{++}$ can be regarded as an open subset of  $\bR^{D(D+1)/2}$, where the openness (under the Euclidean norm) can be inferred from Sylvester's criterion, so  that $g_{\eqref{eq:min-F-separate}}$ becomes a function defined on $\bS^D_{++} \subset \bR^{D(D+1)/2}$.  Since $\theta \mapsto -\log(\det(\theta))$ is a convex and differentiable function on $\bS^D_{++}$, it implies that $g_{\eqref{eq:min-F-separate}}$ is also convex and differentiable. Hence, solutions of $\scrD g_{\eqref{eq:min-F-separate}}$ globally minimize $g_{\eqref{eq:min-F-separate}}$ on $\bS^D_{++}$. To find its solutions, we represent  $\theta = (\theta^{(1,1)}, \ldots, \theta^{(D,1)}, \theta^{(2,2)}, \ldots  \theta^{(D,2)},\ldots, \theta^{(D, D)})^\tran \in \bR^{D(D+1)/2}$. Then, for $1\le j \le i \le D$, according to \cite[p.311, Eq. (8.12)]{Ha97} one has
\begin{align*}
	\frac{\pd \log \det(\theta)}{\pd \theta^{(i, j)}} = [2\theta^{-1} - \diag(\theta^{-1})]^{(i, j)}
\end{align*}
so that the partial derivatives of $g_{\eqref{eq:min-F-separate}}$ are computed by
\begin{align*}
	&\frac{\pd g_{\eqref{eq:min-F-separate}}}{\pd \theta^{(i, j)}}(\theta) \\
	& = \alpha [2A - \diag(A)]^{(i, j)} + \int_{\RO} \tilde\alpha(e) [2\gamma e e^\tran \gamma^\tran - \diag(\gamma e e^\tran \gamma^\tran)]^{(i, j)}\nu(\od e) -\frac{\lambda}{2} [2 \theta^{-1} - \diag(\theta^{-1}) ]^{(i, j)}.
\end{align*}
Solving $\scrD g_{\eqref{eq:min-F-separate}}(\theta) = 0$ we get the solution $\theta = \theta^\opt_\alpha(t, y;\lambda)$ as provided in \eqref{minimizer-mean-variance}. Hence, $\theta^\opt_\alpha(t, y;\lambda)$ is a global minimizer of $g_{\eqref{eq:min-F-separate}}$ on $\bS^D_{++}$. We next investigation the problem
\begin{align*}
	\min_{m \in \bR^D} f_{\eqref{eq:min-F-separate}}(m).
\end{align*}
Solving $\scrD f_{\eqref{eq:min-F-separate}}(m) = 0$ yields the solution $m = m^\opt_\alpha(t, x, y)$ which is provided in \eqref{minimizer-mean-variance}. Moreover, since
\begin{align*}
	\scrD^2 f_{\eqref{eq:min-F-separate}} = 2 \alpha A  + 2 \gamma \bb(\int_{\RO} \tilde \alpha(e) e e^\tran \nu(\od e)\bb)\gamma^\tran =  2 \scrS_\alpha
\end{align*}
and $\scrS_\alpha \in \bS^D_{++}$ as claimed above, we infer that $m^\opt_\alpha(t, x, y)$ is a global minimizer of $f_{\eqref{eq:min-F-separate}}$ on $\bR^D$. Plugging these minimizers back into \eqref{eq:HJB-particular-f-g} and noticing that 
\begin{align*}
	\Tr[\alpha A \scrS_\alpha^{-1}] = \Tr\bb[I_D - \int_{\RO} \tilde \alpha(e) \gamma e e^\tran \gamma^\tran \scrS_\alpha^{-1} \nu(\od e) \bb] = D - \int_{\RO} \tilde \alpha(e) e^\tran \gamma^\tran \scrS_\alpha^{-1} \gamma e \, \nu(\od e)
\end{align*}
we eventually arrive at the equation
\begin{align*}
	0 & =  (x-\hat{w})^2 \B(\pd_t \alpha + \cL_Y \alpha  - \scrM_\alpha^\tran \scrS_\alpha^{-1} \scrM_\alpha  \B)  + \bb(\pd_t \beta + \cL_Y \beta - \frac{\lambda}{2} \log\bb( \frac{(\lambda \pi)^D}{\det(\scrS_\alpha)}\bb) \bb)
\end{align*}
which holds true according to assumption \eqref{system:PDEs}. As a consequence, the function provided in \eqref{eq:optimal-control-ansatz-F} is an optimal solution of \eqref{eq:HJB-ansatz}.
\end{proof}

\subsubsection{Verification argument}

In the following result, the coefficients $K \in \{b, a, \gamma, A, \Sigma\}$ are conveniently extended to be defined on $[0, T] \times \bR^D$ by setting $K(t, y): = K(y)$. We recall $\scrM_\alpha$ and $\scrS_\alpha$ from \eqref{eq:defi:optimal-mean} and \eqref{eq:defi:optimal-variance} respectively.

\begin{theo}\label{thm:verification}
	Let $\alpha, \beta$ satisfy the assumptions of \cref{prop:optimal-value-function}.  Let $(t, x, y) \in [0, T) \times \bR \times \bR^D$ and recall $Y^{t, y}$ in \eqref{eq:SDE-Y-at-t}. Assume furthermore that $\{\beta(\tau, Y^{t,y}_{\tau}) \,|\, \tau \colon \Omega \to [t, T] \mbox{ is a stopping time}\}$ is uniformly integrable and that $\alpha$ is bounded on $[0, T]\times \bR^D$ and satisfies
	\begin{align}
		& \int_t^T |(\scrM_\alpha^\tran \scrS^{-1}_\alpha b)(s, Y^{t,y}_{s-})|^2 \od s + \sup_{s \in (t, T)} |(\scrM_\alpha^\tran \scrS_\alpha^{-1}  \Sigma \scrS^{-1}_\alpha \scrM_\alpha)(s,Y^{t,y}_{s-})  | \le c_{\eqref{eq:thm:verification-condition-alpha-1}} \quad \mbox{a.s.}, \label{eq:thm:verification-condition-alpha-1} \\\
		&\bE\bb[\int_t^T  \Tr[(\Sigma \scrS_\alpha^{-1})(s, Y^{t,y}_{s-})] \od s\bb]  + \bE\bb[\int_t^T\B|\log\B(\det(\scrS_\alpha(s, Y^{t,y}_{s-}))\B)\B| \od s\bb] <\infty, \label{eq:thm:verification-condition-alpha-2}
	\end{align}
	for some non-random constant $c_{\eqref{eq:thm:verification-condition-alpha-1}}>0$. Then a solution for \cref{prob:MV-entropy}  is 
	\begin{align}\label{eq:optimal-stochastic-control}
		H^{t, x, y;*}_s(u) = -(X^{t, x, y;*}_{s-} - \hat{w})(\scrS_\alpha^{-1} \scrM_\alpha)(s, Y^{t,y}_{s-}) + \sqrt{\frac{\lambda}{2}} \scrS_\alpha^{-\frac{1}{2}}(s, Y^{t,y}_{s-}) u, \quad s\in (t, T], u \in \bR^D,
	\end{align}
	with $H^{t, x, y;*}_t(u) := -(x - \hat{w})(\scrS_\alpha^{-1} \scrM_\alpha)(t, y) + \sqrt{\frac{\lambda}{2}} \scrS_\alpha^{-\frac{1}{2}}(t, y) u$, and the corresponding optimal wealth process $X^{t, x, y;*} = (X^{t, x, y;*}_s)_{s \in [t, T]}$ is a unique c\`adl\`ag (strong) solution to the SDE on $[t, T]$,
	\begin{equation}\label{eq:solution-wealth}
		\od X^{t, x, y;*}_s  = -(X^{t, x, y;*}_{s-} - \hat{w}) \od Z^{t,y}_s + \sqrt{\frac{\lambda}{2}} \od M^{t,y}_s, \quad X^{t, x, y;*}_t = x.
	\end{equation}
	Here $Z^{t,y} = (Z^{t,y}_s)_{s \in [t, T]}$, $M^{t,y} = (M^{t,y}_s)_{s \in [t, T]}$ are c\`adl\`ag with $Z^{t,y}_t = 0, M^{t,y}_t = 0$ given by
	\begin{equation*}
		\left\{ \begin{aligned} 
		& \od Z^{t,y}_s = (\scrM_\alpha^\tran \scrS_\alpha^{-1} )(s, Y^{t,y}_{s-}) \od Y^{t,y}_s,\\
		& \od M^{t,y}_s = \Tr[ (\scrS_\alpha^{-\frac{1}{2}} a)(s, Y^{t,y}_{s-}) \od \cW_s^\tran] + \int_{\RO \times \bR^D} \bb(  \scrS_\alpha^{-\frac{1}{2}}(s, Y^{t,y}_{s-}) \frac{u}{\psi(e)}\bb)^\tran \gamma(Y^{t,y}_{s-}) e \, \wt N^\psi_L(\od s, \od e, \od u).
	\end{aligned} \right.
	\end{equation*}
	The value function is $V^*(\cdot|\hat{w}) = v^\opt$, where $v^\opt$ is provided in \eqref{eq:ansatz-value-function}.
\end{theo}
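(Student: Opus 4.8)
I would prove the theorem by a verification argument built on the quadratic solution $v^\opt$ of the HJB equation \eqref{eq:HJB} supplied by \cref{prop:optimal-value-function}, in four steps: (i) construct $X^{t,x,y;*}$ as the unique solution of the \emph{autonomous} linear SDE \eqref{eq:solution-wealth} and verify that $H^{t,x,y;*}\in\cA(t,y)$ with $X^{t,x,y;*}$ as its controlled wealth; (ii) establish $V^H(t,x,y|\hat w)\ge v^\opt(t,x,y)$ for all $H\in\cA(t,y)$; (iii) check that equality holds along $H^{t,x,y;*}$; (iv) identify the value function.

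For step (i), I would observe that in \eqref{eq:solution-wealth} the integrators $Z^{t,y},M^{t,y}$ depend only on $Y^{t,y}$: conditions \eqref{eq:thm:verification-condition-alpha-1}--\eqref{eq:thm:verification-condition-alpha-2}, the boundedness of $\alpha$, and \cref{assum:SDE-coefficients} make $Z^{t,y}$ a semimartingale with square integrable, a.s.\ locally bounded characteristics and $M^{t,y}$ a square integrable martingale, so the linear SDE \eqref{eq:solution-wealth} has a unique c\`adl\`ag solution $X^{t,x,y;*}$ with $\bE[\sup_{t\le s\le T}|X^{t,x,y;*}_s|^2]<\infty$ (a closed-form Dol\'eans--Dade-type representation being available, cf.\ \cite{BT23S}). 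I would then \emph{define} $H^{t,x,y;*}$ by the stated formula and check \cref{defi:admissible-control}: for every $(s,\omega)$ the map $u\mapsto H^{t,x,y;*}_s(u)$ is affine with non-degenerate Gaussian image law $\cN(m^\opt_\alpha(s,X^{t,x,y;*}_{s-},Y^{t,y}_{s-}),\tfrac{\lambda}{2}\scrS_\alpha^{-1}(s,Y^{t,y}_{s-}))$, hence lies in $\cA$, and predictability follows from that of $s\mapsto X^{t,x,y;*}_{s-}$ together with the continuity of $\scrS_\alpha,\scrM_\alpha$; a direct computation gives $\mu^{H^*}_s=m^\opt_\alpha(s,X^{t,x,y;*}_{s-},Y^{t,y}_{s-})$ and $\Theta^{H^*}_s=\tfrac{\lambda}{2}\scrS_\alpha^{-1}(s,Y^{t,y}_{s-})$, whence \eqref{admissible:integrability} reduces — using $\Sigma=A+\gamma(\int_{\RO}ee^\tran\nu)\gamma^\tran$, the boundedness of $\alpha$, and $\bE[\sup_s|X^{t,x,y;*}_s|^2]<\infty$ — to \eqref{eq:thm:verification-condition-alpha-1}--\eqref{eq:thm:verification-condition-alpha-2}, while $\ent(H^{t,x,y;*}_s)=\tfrac12\log\big((2\pi\e)^D\det(\tfrac{\lambda}{2}\scrS_\alpha^{-1}(s,Y^{t,y}_{s-}))\big)$ is integrable exactly by the second bound in \eqref{eq:thm:verification-condition-alpha-2}. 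Finally, substituting $H^{t,x,y;*}$ into the state equation \eqref{eq:couple-wealth-price} and grouping the terms proportional to $(X^{t,x,y;*}_{s-}-\hat w)$ against the rest, via the symmetry of $\scrS_\alpha^{-1},\scrS_\alpha^{-1/2}$ and the definitions of $Z^{t,y},M^{t,y}$, recovers \eqref{eq:solution-wealth}, so $X^{t,x,y;*}$ is indeed the controlled wealth of $H^{t,x,y;*}$; this settles (i) and the SDE claim of the theorem.

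For step (ii), fix $H\in\cA(t,y)$, write $X=X^{t,x,y;H}$, $Y=Y^{t,y}$, and apply It\^o's formula to $v^\opt(s,X_s,Y_s)$ on $[t,T\wedge\tau_n]$ along a localizing sequence $\tau_n\uparrow T$. Since $W$ and $\cW$ are independent, the continuous part of $X$ has quadratic variation $((\mu^H_s)^\tran A\mu^H_s+\Tr[A\Theta^H_s])\,\od s$ and cross variation $(\mu^H_s)^\tran A\,\od s$ with $Y$, matching the second-order terms of \eqref{eq:HJB}, while the change of variables $\int f(e,u)\nu^\psi_L(\od e,\od u)=\int_{\RO\times\bR^D}f(e,\psi(e)u)\nu(\od e)\varphi_D(u)\od u$ turns the compensator of the jump part of $v^\opt(s,X_s,Y_s)$ into precisely the $\nu(\od e)\varphi_D(u)\od u$-integral of \eqref{eq:HJB} with $F=H_s$. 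Hence the drift of $v^\opt(s,X_s,Y_s)$ equals the right-hand side of \eqref{eq:HJB} at $F=H_s$ with the $\min$ and the $-\lambda\ent(F)$ term deleted, so by \eqref{eq:HJB} it is bounded below by $-\lambda\int_{\bR^D}p^H_s(u)\log p^H_s(u)\,\od u$ on $\{\theta^{H_s}\in\bS^D_{++}\}$, the complementary event being negligible by \cref{rema:min-entropy}. Rearranging, taking expectations (the local martingale part being a true martingale up to $\tau_n$) and letting $n\to\infty$ — the family $\{v^\opt(\tau,X_\tau,Y_\tau)\}_\tau$ is uniformly integrable because $|\alpha(\tau,Y_\tau)(X_\tau-\hat w)^2|\le\|\alpha\|_\infty\sup_s|X_s-\hat w|^2\in\bfL_1(\bP)$ by \eqref{eq:integrability-admissible-H} and $\{\beta(\tau,Y_\tau)\}_\tau$ is uniformly integrable by hypothesis, and the entropy term converges by \eqref{eq:admissible:entropy} — I would obtain, using $v^\opt(T,x',y')=(x'-\hat w)^2$ from the terminal conditions in \eqref{system:PDEs}, that $V^H(t,x,y|\hat w)\ge v^\opt(t,x,y)$. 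For steps (iii)--(iv): by \cref{prop:optimal-value-function} the pointwise minimizer in \eqref{eq:HJB} is $F^\opt_\alpha(t,x,y;\lambda;\cdot)$ of \eqref{eq:optimal-control-ansatz-F}--\eqref{minimizer-mean-variance}, and by construction $H^{t,x,y;*}_s(\cdot)=F^\opt_\alpha(s,X^{t,x,y;*}_{s-},Y^{t,y}_{s-};\lambda;\cdot)$, so all inequalities above become equalities along $H^{t,x,y;*}$; as $H^{t,x,y;*}\in\cA(t,y)$ by step (i), the same localization/uniform-integrability passage gives $V^{H^{t,x,y;*}}(t,x,y|\hat w)=v^\opt(t,x,y)$, which together with step (ii) shows that $H^{t,x,y;*}$ solves \cref{prob:MV-entropy} and $V^*(\cdot|\hat w)=v^\opt$.

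The main obstacle will be the apparent circularity in step (i) — the control $H^{t,x,y;*}_s$ involves $X^{t,x,y;*}_{s-}$ while $X^{t,x,y;*}$ is the wealth driven by $H^{t,x,y;*}$ — which I would untangle by first solving the autonomous linear SDE \eqref{eq:solution-wealth} (its coefficients seeing only $Y^{t,y}$) and only afterwards identifying its solution with the controlled state. The accompanying quantitative point is that the a priori bound $\bE[\sup_s|X^{t,x,y;*}_s|^2]<\infty$ produced by that SDE, together with the structural conditions \eqref{eq:thm:verification-condition-alpha-1}--\eqref{eq:thm:verification-condition-alpha-2}, is exactly what is needed both to verify $H^{t,x,y;*}\in\cA(t,y)$ and to justify the passage from local martingales to expectations in step (ii); the remaining care is in tracking the jump term under the substitution $u\mapsto\psi(e)u$ so that the It\^o compensator matches the PIDE integrand of \eqref{eq:HJB} term by term.
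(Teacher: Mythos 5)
Your proposal is correct and follows essentially the same route as the paper: solve the autonomous linear SDE \eqref{eq:solution-wealth} first (via \cref{lem:solution-SDE} and conditions \eqref{eq:thm:verification-condition-alpha-1}--\eqref{eq:thm:verification-condition-alpha-2}) to get $X^{t,x,y;*}$ with $\bE[\sup_s|X^{t,x,y;*}_s|^2]<\infty$, then run the It\^o/HJB verification inequality with localization and the stated uniform-integrability hypotheses for general $H$, and finally check admissibility of $H^{t,x,y;*}$ so that the inequality becomes an equality. The details you flag (matching the jump compensator under $u\mapsto\psi(e)u$, reducing \eqref{admissible:integrability} to $\scrM_\alpha^\tran\scrS_\alpha^{-1}\Sigma\scrS_\alpha^{-1}\scrM_\alpha$ and $\Tr[\Sigma\scrS_\alpha^{-1}]$, and the Gaussian entropy bound via $\log\det\scrS_\alpha$) are exactly the computations carried out in the paper's proof.
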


\begin{rema}
The RL algorithms developed in \cite{JZ22,JZ23} learn the value function and the optimal measure-valued control in parametric classes of functions and probability measures (which have to be chosen beforehand). The structural results on the optimal value and the optimal control obtained in  \cref{prop:optimal-value-function} and \cref{thm:verification} facilitate such a parametrization. Indeed, \cref{prop:optimal-value-function} shows that the optimal value function is quadratic in the portfolio wealth with coefficients which can be computed in terms of the functions $\alpha$ and $\beta$. Moreover, the optimal control law is Gaussian   
 with mean $-(X^{t, x, y;*}_{-} - \hat{w})(\scrS_\alpha^{-1} \scrM_\alpha)(\cdot, Y^{t,y}_{-})$ and covariance matrix $\frac{\lambda}{2} \scrS_\alpha^{-1}(\cdot, Y^{t,y}_{-})$ by \cref{thm:verification} (and, hence, mean and covariance matrix do not depend on $\beta$). Under additional structural assumptions on the stock price model, see \cref{exam:propotional-coefficients} below, the PIDE for $\alpha$ can be solved in closed form, leading to an explicit parametrization for the optimal control law.
  
We also remark that, in general, the mean of the optimal control linearly depends on the associated portfolio wealth, while its covariance matrix is independent of the portfolio wealth. 
\end{rema}

\begin{proof}[Proof of \cref{thm:verification}] Let us fix $(t, x, y) \in [0, T) \times \bR \times \bR^D$. For the sake of notational simplicity, in the presentation below we omit the super-scripts $(t, y)$ and $(t, x, y)$ in relevant processes such as  $Y^{t, y}$ in \eqref{eq:SDE-Y-at-t}, $X^{t, x, y; H}$ in \eqref{eq:couple-wealth-price}, and $M^{t,y}$, $Z^{t,y}$. Since $\bE\b[\int_t^T \Tr[(\Sigma \scrS_\alpha^{-1})(s, Y_{s-})] \od s\b] <\infty$ by  \eqref{eq:thm:verification-condition-alpha-2}, it implies that $M$ is a uniformly square integrable martingale with $\bE[\max_{t \le s \le T}|M_s|^2] <\infty$ due to Doob's maximal inequality. By assumption \eqref{eq:thm:verification-condition-alpha-1}, we apply \cref{lem:solution-SDE} to infer that the SDE \eqref{eq:solution-wealth} has a unique c\`adl\`ag solution $X^*$ with 
	\begin{align}\label{eq:integrability-optimal-SDE}
		\bE\bb[\sup_{t \le s \le T} |X^*_s|^2\bb] < \infty.
	\end{align}
	
	\smallskip
	
	\textbf{\textit{Step 1.}} Take $H \in \cA(t, y)$ arbitrarily.  For $v^\opt$ given in \eqref{eq:ansatz-value-function}, one has
	\begin{align*}
		V^H(t, x, y|\hat{w}) = \bE\bb[v^\opt(T, X^H_T, Y_T) + \lambda \int_t^T \nq \int_{\bR} p^H_s(u) \log p^H_s(u) \od u \od s\bb].
	\end{align*}
Applying It\^o's formula (see, e.g., \cite[Theorem 2.5]{Ku04}) for $v^\opt \in C^{1,2}([0, T] \times \bR^{1+D})$ and $X^H$, $Y$ we obtain, a.s., for $t <r \le T$,
 \begin{align}\label{eq:Ito-value-function}
 	& v^\opt(r, X^{H}_{r}, Y_{
 		r}) - v^\opt(t, x, y)  =  \int_t^{r} \pd_t v^\opt (s, X^H_{s-}, Y_{s-}) \od s \notag \\
 		& \quad + \int_t^{r} \pd_x v^\opt (s, X^H_{s-}, Y_{s-}) (\mu^H_s)^\tran b(Y_{s-}) \od s +  \int_t^{r} \scrD_y v^\opt (s, X^H_{s-}, Y_{s-})^\tran b(Y_{s-}) \od s \notag  \\
 	& \quad + \int_t^r \pd_x v^\opt (s, X^H_{s-}, Y_{s-}) \B((\mu^H_s)^\tran a(Y_{s-}) \od W_s + \Tr[(\Theta^H_s)^{\frac{1}{2}} a(Y_{s-}) \od \cW_s^\tran]\B) \notag\\
 	& \quad  + \int_t^{r} \scrD_y v^\opt (s, X^H_{s-}, Y_{s-})^\tran a(Y_{s-}) \od W_s \notag\\
 	& \quad + \frac{1}{2} \int_t^{r} \pd^2_{xx} v^\opt (s, X^H_{s-}, Y_{s-}) \B((\mu^H_{s})^\tran A(Y_{s-}) \mu^H_{s}  + \Tr[A(Y_{s-}) \Theta^H_s]\B) \od s \notag \\
 	& \quad  + \int_t^{r} \B((\mu^H_{s})^\tran A(Y_{s-}) \scrD^2_{xy} v^\opt (s, X^H_{s-}, Y_{s-})  + \frac{1}{2} \Tr[ \scrD^2_{yy} v^\opt (s, X^H_{s-}, Y_{s-}) A(Y_{s-})]\B) \od s \notag  \\
 	& \quad + \int_{(t, r] \times \RO \times \bR^D} \bb[v^\opt \bb(s, X^H_{s-} + H_{s}\bb(\frac{u}{\psi(e)}\bb)^\tran \gamma(Y_{s-}) e, Y_{s-} + \gamma(Y_{s-}) e\bb) \notag\\
 	& \hspace{250pt} - v^\opt(s, X^H_{s-}, Y_{s-})\bb] \wt N^\psi_L(\od s, \od e, \od u) \notag  \\
 	& \quad + \int_{(t, r] \times \RO \times \bR^D} \bb[ v^\opt \bb(s, X^H_{s-} + H_{s}\bb(\frac{u}{\psi(e)}\bb)^\tran \gamma(Y_{s-}) e, Y_{s-} + \gamma(Y_{s-}) e\bb) - v^\opt(s, X^H_{s-}, Y_{s-}) \notag
 	\\
 	& \qquad - \pd_x v^\opt(s, X^H_{s-}, Y_{s-}) H_{s}\bb(\frac{u}{\psi(e)}\bb)^\tran \gamma(Y_{s-}) e - \scrD_y v^\opt(s, X^H_{s-}, Y_{s-})^\tran \gamma(Y_{s-}) e \bb] \nu^\psi_L(\od e, \od u) \od s.
 \end{align}
We let $z: = \frac{u}{\psi(e)}$ and denote by $P(s, e, z)$ the integrand against $\nu^\psi_L(\od e, \od u) \od s$ in \eqref{eq:Ito-value-function}. It follows from the explicit form of $v^\opt$ that
\begin{align*}
	P(s, e, z) & =  \alpha(s, Y_{s-} + \gamma(Y_{s-}) e) (X^H_{s-} + H_s(z)^\tran \gamma(Y_{s-}) e - \hat{w})^2 -  \alpha(s, Y_{s-}) (X^H_{s-} - \hat{w})^2 \\
	& \quad + \beta(s, Y_{s-} + \gamma(Y_{s-}) e) - \beta(s, Y_{s-})  - 2 \alpha(s, Y_{s-})(X^H_{s-} - \hat{w}) H_s(z)^\tran \gamma(Y_{s-}) e\\
	& \quad - \scrD_y \alpha(s, Y_{s-})^\tran  (X^H_{s-} - \hat{w})^2 \gamma(Y_{s-})e  - \scrD_y \beta(s, Y_{s-})^\tran \gamma(Y_{s-})e\\
	& = (X^H_{s-} - \hat{w})^2\B[\alpha(s, Y_{s-} + \gamma(Y_{s-}) e) - \alpha(s, Y_{s-}) - \scrD_y \alpha(s, Y_{s-})^\tran \gamma(Y_{s-}) e \B]\\
	& \quad + 2(X^H_{s-} - \hat{w})\B[ \alpha(s, Y_{s-} + \gamma(Y_{s-})e) - \alpha(s, Y_{s-}) \B] H_s(z)^\tran \gamma(Y_{s-}) e\\
	& \quad + \alpha(s, Y_{s-} + \gamma(Y_{s-}) e) (H_s(z)^\tran \gamma(Y_{s-}) e)^2 \\
	& \quad + \beta(s, Y_{s-} + \gamma(Y_{s-}) e) - \beta(s, Y_{s-}) - \scrD_y \beta(s, Y_{s-})^\tran \gamma(Y_{s-})e.
\end{align*}
Let $\Upsilon^{(0)}_\alpha, \Upsilon^{(1)}_\alpha, \Upsilon^{(2)}_\alpha$ and $\Upsilon^{(2)}_\beta$ respectively be (continuous) $\Upsilon$-dominating functions of $\alpha$ and $\beta$ in the sense of \cref{defi:Upsilon-class}. Then, for some constant $c_D>0$ depending only on $D$, we get, a.s, 
\begin{align*}
	&\int_t^T \nq \int_{\RO\times \bR^D} |P(s, e, z)| \nu^\psi_L(\od e, \od u) \od s  = 	\int_t^T \nq \int_{\RO\times \bR^D} |P(s, e, u)| \nu(\od e) \varphi_D(u) \od u \od s \\
	& \le \int_t^T (X^H_{s-} - \hat{w})^2 \Upsilon^{(2)}_\alpha(s, Y_{s-}) \od s  \\
	& \quad + 2 c_D \int_t^T |X^H_{s-} - \hat{w}| \bb(\int_{\bR^D} \|H_s(u)\| \varphi_D(u) \od u\bb) \|\gamma(Y_{s-})\| \Upsilon^{(1)}_\alpha(s, Y_{s-}) \od s\\
	& \quad +  c_D \int_t^T  \|\gamma(Y_{s-})\|^2 \Upsilon^{(0)}_\alpha(s, Y_{s-})  \bb( \int_{\bR^D} \|H_s(u)\|^2 \varphi_D(u) \od u\bb) \od s + \int_t^T \Upsilon^{(2)}_\beta(s, Y_{s-}) \od s\\
	&  <\infty,
\end{align*}
where we use the c\`adl\`ag property of $X^H, Y$ and assumption \eqref{adm-control-omegawise-integrability} to deduce the finiteness.\\
Let $Q(s, e, z)$ denote the integrand against $\wt N^\psi_L$ in \eqref{eq:Ito-value-function} and define
\begin{align*}
R(s,e,z) & : = \B[2\alpha(s, Y_{s-})(X^H_{s-} - \hat{w}) H_s(z) + \scrD_y \alpha(s, Y_{s-})  (X^H_{s-} - \hat{w})^2  + \scrD_y \beta(s, Y_{s-})\B]^\tran  \gamma(Y_{s-})e\\
& =: \wt R(s, z)^\tran \gamma(Y_{s-})e
\end{align*}
so that
\begin{align*}
	Q(s, e, z) = P(s, e, z) + R (s, e, z).
\end{align*}
Then, there is a constant $c'_D>0$ such that, a.s.,
\begin{align*}
&\int_t^T \nq \int_{\RO \times \bR^D} |R(s, e, z)|^2 \nu^\psi_L(\od e, \od u) \od s = \int_t^T \nq \int_{\RO \times \bR^D} |R(s, e, u)|^2 \nu(\od e) \varphi_D(u) \od u \od s\\
& \le c'_D \bb(\int_{\RO} \|e\|^2 \nu(\od e) \bb) \int_t^T \nq \int_{\bR^D} \|\wt R(s, u)\|^2 \|\gamma(Y_{s-})\|^2 \varphi_D(u) \od u \od s\\
& <\infty.
\end{align*}
On the other hand, by rearranging terms we get a predictable process $\phi^H$ and a local martingale $U^H$ null at $t$ such that
\begin{align*}
	 v^\opt(r, X^{H}_{r}, Y_{
	 	r}) - v^\opt(t, x, y) = \int_t^{r} \phi_s^H \od s + U^H_{r}.
\end{align*}
Since $v^\opt$ solve the HJB equation \eqref{eq:HJB-ansatz} and any $H \in \cA(t, y)$ is sub-optimal in general, we arrive at, a.s, 
\begin{align}\label{eq:Ito-formula-decomposition}
	v^\opt(r, X^{H}_{r}, Y_{
		r}) - v^\opt(t, x, y) \ge -\lambda \int_t^{r} \nq \int_{\bR^D} p^H_s(u) \log p^H_s(u) \od u \od s + U^H_{r}.
\end{align}
To deal with $U^H$, we define the localizing sequence $(\tau_n)_{n \ge 1}$ as follows
\begin{align*}
	\tau_n : = T \wedge \inf\bb\{r \in (t, T] : & \int_t^r\bb(  \int_{\RO \times \bR^D} (|P(s, e, u)| + |R(s, e, u)|^2) \nu(\od e) \varphi_D(u) \od u \\
	&  + \scrD_y v^\opt (s, X^H_{s-}, Y_{s-})^\tran A(Y_{s-}) \scrD_y v^\opt (s, X^H_{s-}, Y_{s-}) \\
	&  + |\pd_x v^\opt (s, X^H_{s-}, Y_{s-})|^2\B((\mu^H_s)^\tran A(Y_{s-}) \mu^H_s + \Tr[A(Y_{s-}) \Theta^H_s]\B) \bb)  \od s \ge n \bb\}.
\end{align*}
Since the integrand against $\od s$ in the definition of $\tau_n$ is integrable on $[t, T]$ a.s., the integral $\int_t^r (\cdots) \od s$ is finite and non-decreasing in $r$ a.s., and hence $(\tau_n)_{n \ge 1}$ is a non-decreasing sequence of stopping times converging a.s. to $T$ as $n \to \infty$. We note that the local martingale $U^H$ on the right-hand side of \eqref{eq:Ito-formula-decomposition} is an integrable martingale null at $t$ when stopping at $\tau_n$, and hence, vanishes when taking the expectation. Therefore,
\begin{align*}
	v^\opt(t, x, y) & \le  \bE\bb[v^\opt(\tau_n, X^H_{\tau_n}, Y_{\tau_n}) + \lambda \int_t^{\tau_n} \nq \int_{\bR^D} p^H_s(u) \log p^H_s(u) \od u\bb]\\
	& = \bE\bb[ \alpha(\tau_n, Y_{\tau_n}) (X^H_{\tau_n} - \hat{w})^2 + \beta(\tau_n, Y_{\tau_n}) + \lambda \int_t^{\tau_n} \nq \int_{\bR^D} p^H_s(u) \log p^H_s(u) \od u\bb].
\end{align*}
By assumption, $\alpha$ is continuous and bounded, $\beta$ is continuous and $\{\beta(\tau_n, Y_{\tau_n})\}_{n \ge 1}$ is uniformly integrable, and the entropy term is also uniform integrable for $H \in \cA(t, y)$, we exploit \eqref{eq:integrability-admissible-H} and use the dominated convergence theorem with keeping in mind that $(\tau_n)_{n\ge 1}$ is a.s. eventually constant $T$ to get
\begin{align*}
	v^\opt(t, x, y) \le  \bE\bb[v^\opt(T, X^H_{T}, Y_{T}) + \lambda \int_t^{T} \nq \int_{\bR^D} p^H_s(u) \log p^H_s(u) \od u\bb] = V^H(t, x, y|\hat{w}).
\end{align*}
Since $H \in \cA(t, y)$ is arbitrary, it implies that $v^\opt(t, x, y) \le V^*(t, x, y|\hat{w})$.

\smallskip

\textbf{\textit{Step 2.}} As suggested by \eqref{eq:optimal-control-ansatz-F}, $H^*$ provided in \eqref{eq:optimal-stochastic-control}  is a candidate for optimal controls. If $H^*$  is admissible, then we can apply the arguments in \textbf{\textit{Step 1}} for $H^*$, where inequality \eqref{eq:Ito-formula-decomposition} becomes an equality, to obtain
\begin{align*}
	v^{\opt}(t, x, y) = V^{H^*}(t, x, y|\hat{w}).
\end{align*}
Hence $v^{\opt}(t, x, y) = V^{*}(t, x, y|\hat{w})$. It remains to show that $H^*$ is admissible by verifying the requirements in \cref{defi:admissible-control}. Condition (H\ref{adm-control-predictability}) is obvious from the definition of $H^*$. For (H\ref{adm-control-integrability}), one has
\begin{align*}
	\mu^{H^*}_s = -(X^*_{s-} - \hat{w}) (\scrS_\alpha^{-1} \scrM_\alpha)(s, Y_{s-})  \quad \mbox{and} \quad \Theta^{H^*}_s = \frac{\lambda}{2} \scrS_\alpha^{-1}(s, Y_{s-}).
\end{align*}
Condition \eqref{adm-control-omegawise-integrability}
is straightforward due to the c\`adl\`ag property of $X^*, Y$ and the continuity of $\scrM_\alpha$, $\scrS_\alpha^{-1}$ on $[0, T] \times \bR^D$. For \eqref{admissible:integrability}, expanding the square and using $\int_{\bR^D} u \varphi_D(u) \od u = 0$ and $\int_{\bR^D} u u^\tran \varphi_D(u) \od u = I_D$ in the jump part we get
\begin{align*}
	& (\mu^{H^*}_s)^{\tran} A(Y_{s-}) \mu^{H^*}_s + \Tr[A(Y_{s-}) \Theta^{H^*}_s]  + \int_{\RO \times \bR^D} |H^*_s(u)^\tran \gamma(Y_{s-}) e|^2 \nu(\od e) \varphi_D(u)\od u \\
	& = (X^*_{s-}-\hat{w})^2 (\scrM_\alpha^\tran \scrS_\alpha^{-1} A \scrS_\alpha^{-1} \scrM_\alpha)(s, Y_{s-}) + \frac{\lambda}{2} \Tr[(A \scrS_\alpha^{-1})(s, Y_{s-})] \\
	&  + (X^*_{s-}-\hat{w})^2 \bb(\scrM_\alpha^\tran \scrS_\alpha^{-1} \gamma \int_{\RO}    e e^\tran \nu(\od e) \gamma^\tran \scrS_\alpha^{-1} \scrM_\alpha\bb)(s, Y_{s-}) + \frac{\lambda}{2} \Tr\bb[\bb(\gamma \int_{\RO}    e e^\tran \nu(\od e) \gamma^\tran \scrS_\alpha^{-1}\bb) (s, Y_{s-})\bb]\\
	& =(X^*_{s-}-\hat{w})^2 (\scrM_\alpha^\tran \scrS_\alpha^{-1}  \Sigma \scrS_\alpha^{-1} \scrM_\alpha)(s, Y_{s-}) + \frac{\lambda}{2} \Tr[(\Sigma \scrS_\alpha^{-1})(s, Y_{s-})].
\end{align*}
In addition, using H\"older's inequality yields 
\begin{align*}
	\bb|\int_t^T|(\mu^{H^*}_s)^\tran b(s, Y_{s-})| \od s\bb|^2
	 & \le \bb(\int_t^T (X^*_{s-} - \hat{w})^2 \od s \bb) \bb(\int_t^T |(\scrM_\alpha^\tran \scrS_\alpha^{-1}  b)(s, Y_{s-})|^2 \od s\bb).
\end{align*}
Hence \eqref{admissible:integrability} is satisfied by using \eqref{eq:thm:verification-condition-alpha-1}, \eqref{eq:thm:verification-condition-alpha-2} and \eqref{eq:integrability-optimal-SDE}. To verify (H\ref{admissible:entropy}), we might take $p^{H^*}_s(\cdot)$ to be the continuous density function of the Gaussian distribution $\cN(\mu^{H^*}_s, \Theta^{H^*}_s)$ with mean $\mu^{H^*}_s$ and covariance matrix $\Theta^{H^*}_s$, and then \eqref{eq:admissible:entropy} follows from \eqref{eq:thm:verification-condition-alpha-2}.
\end{proof}

\subsection{Explicit solutions of optimal exploratory SDEs and Lagrange multipliers} \label{sec:explicit-solutions} 
As an advantage of our approach, the optimal exploratory dynamic \eqref{eq:solution-wealth} is a linear SDE with jumps which enables us to find its solutions in a closed-form. As a consequence, we can also explicitly determine the Lagrange multiplier $\hat{w}$ using the constraint $\bE[X^{H^*}_T] = \hat{z}$, where $H^*$ is given in \eqref{eq:optimal-stochastic-control}. 

We consider \cref{prob:MV-entropy} and assume the assumptions of \cref{thm:verification} for $(t, x, y)  = (0, x_0, y_0)$, and omit super-scripts $(0, y_0)$ and $(0, x_0, y_0)$ in relevant processes.

\begin{prop}\label{prop:explicit-SDE-multiplier} Under the assumptions of \cref{thm:verification}, if $\Delta Z \neq 1$ on $[0, T]$ then the optimal wealth process $X^* = (X^*_r)_{r \in [0, T]}$ in \eqref{eq:solution-wealth} is given by
	\begin{align}\label{eq:explicit-solution-optimal-SDE}
		X^*_r & =  \hat{w} + \bb[x_0 - \hat{w} + \sqrt{\frac{\lambda}{2}}\bb(\int_0^r \frac{\od M_s}{\cE(-Z)_{s-}} + \int_0^r \frac{\od [M, Z]_s}{\cE(-Z)_{s-}}\bb)\bb] \cE(-Z)_r, \quad r \in [0, T],
	\end{align}
	where $\cE(-Z) = (\cE(-Z)_r)_{r \in [0, T]}$ denotes the Dol\'eans--Dade exponential\footnote{See, e.g., \cite[Section II.8]{Pr05}.} of $-Z$, i.e.
	\begin{align*}
		\cE(-Z)_0 = 1, \quad \cE(-Z)_r = \exp\bb(-Z_r - \frac{1}{2} \int_0^r \od [Z, Z]^c_s\bb) \prod_{0 < s \le r} (1- \Delta Z_s) \e^{\Delta Z_s}, \quad r \in (0, T].
	\end{align*}
	Here the quadratic covariation terms are explicitly expressed as follows
	\begin{align*}
		\od [M, Z]_s & = \int_{\RO \times \bR^D} \frac{1}{\psi(e)} e^\tran \B[\gamma^\tran \scrS_\alpha^{-1} \scrM_\alpha (\scrS_\alpha^{-\frac{1}{2}} u)^\tran  \gamma\B](s, Y_{s-}) e\, N^\psi_L (\od s, \od e, \od u),\\
		\od [Z, Z]_s^c & =  ( \scrM_\alpha^\tran \scrS^{-1}_\alpha A \scrS^{-1}_\alpha \scrM_\alpha)(s, Y_{s-}) \od s.
	\end{align*}
Moreover, if $\bE[\cE(-Z)_T] \neq 1$ then the Lagrange multiplier $\hat{w}$ (such that $\bE[X^*_T] = \hat{z}$) is given by
\begin{align}\label{eq:explicit-Lagrange-multiplier}
	\hat{w} = \frac{1}{1- \bE[\cE(-Z)_T]} \bb(\hat{z} - \sqrt{\frac{\lambda}{2}} \bE\bb[\bb(\int_0^T \frac{\od M_s}{\cE(-Z)_{s-}} + \int_0^T \frac{\od [M, Z]_s}{\cE(-Z)_{s-}}\bb) \cE(-Z)_T\bb] - x_0 \bE[\cE(-Z)_T]\bb).
\end{align}
\end{prop}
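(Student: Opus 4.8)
The plan is to treat \eqref{eq:solution-wealth} as a \emph{linear} SDE driven by semimartingales and to solve it by a variation-of-constants argument, then obtain the Lagrange multiplier by taking expectations in the resulting closed form. Writing $U_r:=X^{*}_r-\hat w$, equation \eqref{eq:solution-wealth} becomes the linear SDE $\od U_r=-U_{r-}\,\od Z_r+\sqrt{\frac{\lambda}{2}}\,\od M_r$ with $U_0=x_0-\hat w$, whose homogeneous version $\od V_r=-V_{r-}\,\od Z_r$ is solved by the Dol\'eans--Dade exponential $\cE(-Z)$, with $\od\cE(-Z)_s=-\cE(-Z)_{s-}\,\od Z_s$. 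The standing assumption $\Delta Z\neq 1$ on $[0,T]$ guarantees that $\cE(-Z)$ never vanishes, so that $1/\cE(-Z)$ is again a semimartingale and $\cE(-Z)^{-1}_{-}$ is a locally bounded predictable integrand; in particular the stochastic integrals in \eqref{eq:explicit-solution-optimal-SDE} are well defined. Since \cref{thm:verification} already provides a \emph{unique} c\`adl\`ag solution of \eqref{eq:solution-wealth} satisfying \eqref{eq:integrability-optimal-SDE}, it suffices to verify that the process $X^{*}$ given by the right-hand side of \eqref{eq:explicit-solution-optimal-SDE} solves \eqref{eq:solution-wealth}.

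For the verification I would set $\Phi_r:=x_0-\hat w+\sqrt{\frac{\lambda}{2}}\big(\int_0^r\cE(-Z)_{s-}^{-1}\,\od M_s+\int_0^r\cE(-Z)_{s-}^{-1}\,\od[M,Z]_s\big)$, so that $X^{*}_r-\hat w=\cE(-Z)_r\,\Phi_r$, and apply the integration-by-parts formula $\od(\cE(-Z)\Phi)_s=\Phi_{s-}\,\od\cE(-Z)_s+\cE(-Z)_{s-}\,\od\Phi_s+\od[\cE(-Z),\Phi]_s$. Here $\Phi_{s-}\,\od\cE(-Z)_s=-(X^{*}_{s-}-\hat w)\,\od Z_s$ reproduces the drift of \eqref{eq:solution-wealth}, and $\cE(-Z)_{s-}\,\od\Phi_s=\sqrt{\frac{\lambda}{2}}\,\od M_s+\sqrt{\frac{\lambda}{2}}\,\od[M,Z]_s$. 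The delicate point --- and the step I expect to be the main obstacle --- is to show that the cross bracket $\od[\cE(-Z),\Phi]_s=-\cE(-Z)_{s-}\,\od[Z,\Phi]_s$ exactly offsets the excess $\sqrt{\frac{\lambda}{2}}\,\od[M,Z]_s$; this amounts to carefully tracking the pure-jump contributions, using that $[M,Z]$ is a pure-jump process (see below) and the identity $\cE(-Z)_s=\cE(-Z)_{s-}(1-\Delta Z_s)$ inside the jump sums. For a continuous integrator $Z$ this bracket term would vanish and the verification would be classical; it is precisely the presence of jumps that forces the $\od[M,Z]$-correction into \eqref{eq:explicit-solution-optimal-SDE} and makes this step non-routine.

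The explicit forms of the covariation terms follow from the dynamics of $M$, $Z$ and $Y$ in \eqref{eq:SDE-Y-at-t} and \cref{thm:verification}. The continuous local-martingale part of $M$ is driven by $\cW$, whereas that of $Z$ (through $Y$) is driven by $W$; since $\cW$ and $W$ are independent by \cref{thm:limit-distribution-integrator}, the continuous covariation $[M,Z]^c$ vanishes, so $[M,Z]_s=\sum_{0<r\le s}\Delta M_r\,\Delta Z_r$. At a jump of $L^\psi$ at time $r$ with mark $(e,u)$ one has $\Delta Z_r=(\scrM_\alpha^\tran\scrS_\alpha^{-1})(r,Y_{r-})\gamma(Y_{r-})e$ and $\Delta M_r=\big(\scrS_\alpha^{-\frac{1}{2}}(r,Y_{r-})u/\psi(e)\big)^\tran\gamma(Y_{r-})e$, and rewriting the resulting sum of products as an integral against $N^\psi_L$ gives the stated formula for $\od[M,Z]_s$. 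Likewise $\od[Z,Z]^c_s$ comes only from the $a(Y_{s-})\,\od W_s$-part of $\od Z_s=(\scrM_\alpha^\tran\scrS_\alpha^{-1})(s,Y_{s-})\,\od Y_s$ together with $A=aa^\tran$, giving $(\scrM_\alpha^\tran\scrS_\alpha^{-1}A\scrS_\alpha^{-1}\scrM_\alpha)(s,Y_{s-})\,\od s$.

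Finally, for the Lagrange multiplier I would observe that $Z$, $M$ and $\cE(-Z)$ do not depend on $\hat w$, so evaluating \eqref{eq:explicit-solution-optimal-SDE} at $r=T$ exhibits $X^{*}_T$ as an affine function of $\hat w$:
\begin{align*}
	X^{*}_T=\hat w\,\big(1-\cE(-Z)_T\big)+x_0\,\cE(-Z)_T+\sqrt{\tfrac{\lambda}{2}}\,\Big(\int_0^T\tfrac{\od M_s}{\cE(-Z)_{s-}}+\int_0^T\tfrac{\od[M,Z]_s}{\cE(-Z)_{s-}}\Big)\cE(-Z)_T.
\end{align*}
All three terms are integrable: $\cE(-Z)$ solves the homogeneous equation, so $\bE[\sup_{s\le T}\cE(-Z)_s^2]<\infty$ by the same estimate (via \cref{lem:solution-SDE} and \eqref{eq:thm:verification-condition-alpha-1}) that yields \eqref{eq:integrability-optimal-SDE}, while $\bE[|X^{*}_T|^2]<\infty$ controls the remaining term. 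Taking expectations in this identity and imposing $\bE[X^{*}_T]=\hat z$ then gives a single linear equation for $\hat w$, whose unique solution under the assumption $\bE[\cE(-Z)_T]\neq 1$ is \eqref{eq:explicit-Lagrange-multiplier}.
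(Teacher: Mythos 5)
Your strategy is the same as the paper's: the paper solves the linear SDE \eqref{eq:solution-wealth} by the variation-of-constants formula for semimartingale integrators, simply quoting \cite[Exercise V.27]{Pr05} for the integrator $-Z$ (using $\Delta Z\neq 1$ so that $\cE(-Z)$ never vanishes), and then obtains $\hat w$ exactly as you do, by noting that $Z$, $M$, $\cE(-Z)$ do not depend on $\hat w$, that $\cE(-Z)$ is square integrable by \eqref{eq:thm:verification-condition-alpha-1} and \cref{lem:solution-SDE} (its conditional quadratic variation is bounded by $c_{\eqref{eq:thm:verification-condition-alpha-1}}T$), and taking expectations at $r=T$. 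Your identification of $[M,Z]$ as a pure-jump process (via the independence of $W$ and $\cW$), the computation of the jump products from the marks of $N^\psi_L$, and the formula for $[Z,Z]^c$ all agree with the paper.

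The gap is in the one step you defer. For $\Phi_r=x_0-\hat w+\sqrt{\lambda/2}\int_0^r\cE(-Z)_{s-}^{-1}\,\od(M_s+[M,Z]_s)$ the cross bracket is
\[
\od[\cE(-Z),\Phi]_s=-\cE(-Z)_{s-}\cdot\sqrt{\tfrac{\lambda}{2}}\,\cE(-Z)_{s-}^{-1}\,\od[Z,\,M+[M,Z]]_s=-\sqrt{\tfrac{\lambda}{2}}\,\od[M,Z]_s-\sqrt{\tfrac{\lambda}{2}}\,\od[Z,[M,Z]]_s,
\]
so after cancelling the two $[M,Z]$-terms a residual $-\sqrt{\lambda/2}\sum_{u\le s}(\Delta Z_u)^2\Delta M_u$ remains, and this does not vanish because $M$ and $Z$ jump simultaneously (both jump with $N^\psi_L$). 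The cancellation becomes exact only if the $[M,Z]$-integral is taken against $\cE(-Z)_s^{-1}=\big(\cE(-Z)_{s-}(1-\Delta Z_s)\big)^{-1}$ rather than $\cE(-Z)_{s-}^{-1}$ --- this is precisely the identity you mention but never deploy, and it is how the variation-of-constants formula actually reads for the finite-variation covariation term. (Test case: $Z=cN$, $M_t=N_t-t$ for a Poisson process $N$ and $c\notin\{0,1\}$; the two readings of the formula already differ by $c^2$ at the first jump time.) So you must either redo the bracket computation with the corrected integrand, or invoke the variation-of-constants formula of \cite[Exercise V.27]{Pr05} as the paper does; as sketched, your verification produces a nonzero defect term and does not establish \eqref{eq:explicit-solution-optimal-SDE}.
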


\begin{proof}
	For $Z$ given in \cref{thm:verification}, we write
	$$- (X^*_{s-} - \hat{w}) \od Z_s = (X^*_{s-} - \hat{w}) \od (- Z_s).$$
	Since $\Delta Z \neq 1$ by assumption, it implies that $\inf\{s \in (0, T] : 1 - \Delta Z_s =0\} = \infty$ a.s. We then apply \cite[Exercise V.27]{Pr05} to obtain the explicit representation for $X^*$ as in \eqref{eq:explicit-solution-optimal-SDE}.
	
	For the Lagrange multiplier $\hat{w}$, we first notice that $\cE(-Z)$ satisfies the following SDE on $[0, T]$
	\begin{align*}
		\cE(-Z)_r = 1 + \int_0^r \cE(-Z)_{s-} \od (-Z_s).
	\end{align*}
	Since the \textit{conditional} quadratic variation\footnote{See, e.g., \cite[Chapter III, p.124]{Pr05}.} of the integrator $-Z$ is
	\begin{align*}
		\<-Z, -Z\>_T = \int_0^T (\scrM_\alpha^\tran \scrS_\alpha^{-1} \Sigma \scrS_\alpha^{-1} \scrM_\alpha)(s, Y_{s-}) \od s \le c_{\eqref{eq:thm:verification-condition-alpha-1}} T,
	\end{align*}
	it follows from condition \eqref{eq:thm:verification-condition-alpha-1} and \cref{lem:solution-SDE} that $\cE(-Z)$ is square integrable. Since $X^*$ is also square integrable by \eqref{eq:integrability-optimal-SDE}, letting $r = T$ and taking the expectation both sides of \eqref{eq:explicit-solution-optimal-SDE} we rearrange terms and use the constraint $\bE[X^*_T] = \hat{z}$ to obtain \eqref{eq:explicit-Lagrange-multiplier}.
\end{proof}

\begin{rema}\label{remark:jump-size-Z}
	\begin{enumerate}[\quad (1)]
		\itemsep2pt
		\item \label{remark:jump-size-Z:AC} If $\nu$ is absolutely continuous with respect to the $D$-dimensional Lebesgue measure $\Leb_D$, then  $\Delta Z \neq 1$ on $[0, T]$. Indeed, by letting $J_t : = \int_0^t \int_{\RO \times \bR^D} e \wt N^\psi_L(\od s, \od e, \od u)$ so that $J$ admits $\nu$ as its L\'evy measure we get
		\begin{align*}
			\bE[\#\{s \in (0, T] : \Delta Z_s = 1\}] & = \bE[\#\{s \in (0, T] : (\scrM_\alpha^\tran \scrS_\alpha^{-1} \gamma)(s, Y_{s-}) \Delta J_s = 1\}]\\
			& = \bE\bb[\int_0^T \nq \int_{\RO} \1_{\{(\scrM_\alpha^\tran \scrS_\alpha^{-1} \gamma)(s, Y_{s-}) e = 1\}} \nu(\od e) \od s \bb]\\
			& = 0,
		\end{align*}
		where we combine Fubini's theorem with the fact that  hyperplanes have Lebesgue measure zero to obtain the last equality. Hence, $\#\{s \in (0, T] : \Delta Z_s = 1\} = 0$ a.s. 
		
		\item If the condition ``$\Delta Z \neq 1$ on $[0, T]$'' in \cref{prop:explicit-SDE-multiplier} is not satisfied, then we can still obtain explicit representations of $X^*$ and $\hat{w}$. However, as these expressions are rather technical, we refer the interested readers to \cite{BT23S} for more details.
	\end{enumerate}
\end{rema}

\subsection{Illustrative examples}

 Let us consider some situations in which assumptions of \cref{prop:optimal-value-function} and \cref{thm:verification} are validated. For matrices $P, Q \in \bS^D$ we write $P \preceq Q$ or $Q \succeq P$ if $Q - P \in \bS^D_+$.

\begin{exam}[Proportional coefficients]\label{exam:propotional-coefficients}
Let $b, a, \gamma$ in \cref{sec:setting} satisfy $b^{(i)}, a^{(i,j)}, \gamma^{(i, j)} \in C^\infty_b(\bR^D)$ for all $i, j = 1, \ldots, D$. Assume that there are constants $K>0$ and $\ep>0$ such that, for all $y \in \bR^D$,
\begin{equation}\label{eq:exam:dependent-coefficients}
	\left\{ \begin{aligned} 
		& A(y) \succeq \ep I_D,\\
		&(b^\tran \Sigma^{-1}b)(y) = K.
		\end{aligned} \right.
\end{equation}
For example, if there exist $U\colon \bR^D \to \bS^D_{++}$ with $U^{(i,j)} \in C^\infty_b(\bR^D)$, $i, j = 1, \ldots, D$, and a constant $\delta>0$ such that $U(y) \succeq \delta I_D$ for all $y \in \bR^D$ and that, for some constant $\tilde b \in \bR^D$, $\tilde a, \tilde \gamma \in \bR^{D\times D}$ with $\tilde b \neq 0$ and $\det(\tilde a) \neq 0$,
\begin{align*}
	b(y) = U(y) \tilde b, \quad a(y) = U(y) \tilde a,\quad  \gamma(y) = U(y) \tilde\gamma,
\end{align*} 
then condition \eqref{eq:exam:dependent-coefficients} holds true with $K = \tilde b^\tran(\tilde a \tilde a^\tran + \tilde \gamma \int_{\RO} e e^\tran \nu(\od e) \tilde \gamma^\tran)^{-1} \tilde b$ and $\ep = \delta^2 \tilde \ep$, where $\tilde \ep>0$ is sufficiently small such that $\tilde a \tilde a^\tran \succeq \tilde \ep I_D$.

Now, under \eqref{eq:exam:dependent-coefficients}, \cref{assum:SDE-coefficients} is obviously satisfied. Moreover, since $\Sigma(y) \succeq  A(y)$, it follows from the ellipticity condition $A(y) \succeq \ep I_D$ that $\Sigma(y) \succeq \ep I_D$. Hence, \cref{lem:matrix-inequality} gives
\begin{align*}
	\Sigma^{-1}(y) \preceq \frac{1}{\ep} I_D, \quad \forall y \in \bR^D.
\end{align*}
Consequently, one has $\sup_{y \in \bR^D}\|\Sigma^{-1}(y)\| <\infty$.

We first find solution $\alpha$ of the PIDEs \eqref{system:PDEs} which does not depend on $y$. For $\alpha(t, \cdot) = \alpha(t)$, we get $$\scrS_\alpha(t, y) = \alpha(t) \Sigma(y), \quad \scrM_\alpha(t, y) = \alpha(t) b(y)$$
so that $(\scrM^\tran_\alpha \scrS^{-1}_\alpha \scrM_\alpha)(t, y) = \alpha(t) K$.
 Then the PIDE for $\alpha$ in \eqref{system:PDEs} boils down to the following ordinary differential equation (ODE)
\begin{equation*}
	\left\{ \begin{aligned} 
		&\alpha'(t)   - \alpha(t) K = 0, \quad t \in [0, T), \\
		&\alpha(T) = 1,
	\end{aligned} \right.
\end{equation*}
whose solution is given by
\begin{align*}
	\alpha(t) = \e^{-(T-t)K}, \quad t \in [0, T].
\end{align*}
It is easy to check that the assumptions of \cref{prop:optimal-value-function} and \cref{thm:verification} are satisfied for $\alpha$. Next, plugging this $\alpha$ into the PIDE for $\beta$ in  \eqref{system:PDEs} we obtain
		\begin{equation}\label{exam:PIDE-beta}
			\left\{ \begin{aligned} 
				& \pd_t \beta (t,y) + \cL_Y \beta(t,y) - \frac{\lambda}{2} \log\bb( \frac{(\lambda \pi)^D}{\det(\alpha(t) \Sigma(y))}\bb) = 0, \quad t \in [0, T),\\
				&\beta(T,\cdot) = 0.
			\end{aligned} \right.
		\end{equation}
	We apply \cite[Theorem 1]{MYZ10} to conclude that the PIDE \eqref{exam:PIDE-beta} has a unique classical solution $\beta \in C^{1,2}([0, T] \times \bR^D)$. Moreover, $\beta$ and its partial derivatives $\pd_t \beta, \scrD_y \beta, \scrD^2_{yy} \beta$ are  uniformly bounded on $[0, T] \times \bR^D$. Then $\beta$ also satisfies the assumptions of \cref{prop:optimal-value-function} and \cref{thm:verification}. The Feynman--Kac representation for $\beta$ (see, e.g., \cite[Theorems 3.4 and 3.5]{BBP97}) is 
	\begin{align*}
		\beta(t, y) =  \bE\bb[\int_t^T f(s, Y^{t, y}_s) \od s \bb], \quad (t, y) \in [0, T] \times \bR^d,
	\end{align*}	
	where $f(t,y): = - \frac{\lambda}{2} \log
	( \frac{(\lambda \pi)^D}{\det(\alpha(t) \Sigma(y))})$ and $Y^{t, y}$ is given in \eqref{eq:SDE-Y-at-t}. The value function is
\begin{align*}
	V^*(t, x, y|\hat{w}) = \e^{-(T-t)K} (x- \hat{w})^2 + \beta(t, y).
\end{align*}
The associated exploratory SDE for $X^* = (X^*_r)_{r \in [0, T]}$ is given by $X_0 = x_0$ and
\begin{align}\label{eq:exam:exploratory-SDE}
	\od X^{*}_r & = - (X^{*}_{r-} - \hat{w})\bb( K\od r + (b^\tran \Sigma^{-1} a)(Y_{r-}) \od W_r + (b^\tran \Sigma^{-1} \gamma)(Y_{r-}) \int_{E \times \bR^D} e \wt N^\psi_L(\od r, \od e, \od u)\bb) \notag\\
	& \quad  + \sqrt{\frac{\lambda}{2}}\e^{\frac{1}{2}(T-r) K}  \bb(\Tr[(\Sigma^{-\frac{1}{2}}a)(Y_{r-}) \od \cW_r^\tran] + \int_{E \times \bR^D}  \frac{u^\tran}{\psi(e)} (\Sigma^{-\frac{1}{2}} \gamma)(Y_{r-}) e \, \wt N^\psi_L(\od r, \od e, \od u)\bb),
\end{align}
whose explicit expression can be derived either from \cite{BT23S} or from \eqref{eq:explicit-solution-optimal-SDE} provided that $\Delta Z \neq 1$ on $[0, T]$.

Regarding the Lagrange multiplier $\hat{w}$, due to the condition $(b^\tran \Sigma^{-1}b)(y) = K$ in \eqref{eq:exam:dependent-coefficients}, we can simply calculate its value by taking the expectation of $X^*_r$ with noting that the martingale terms in the expression \eqref{eq:exam:exploratory-SDE} of $X^*_r$ are square integrable null at $0$, and then using Fubini's theorem to get
\begin{align*}
	\bE[X^*_r] = x_0 - K \int_0^r (\bE[X^*_s] - \hat{w}) \od s,
\end{align*}
which then gives
\begin{align*}
	\bE[X^*_r] = \hat{w} + (x_0 - \hat{w}) \e^{- K r}, \quad r \in [0, T].
\end{align*}
By the constraint $\bE[X^*_T] = \hat{z}$, we arrive at
$$\hat{w} = \frac{\hat{z}\e^{K T} - x_0}{\e^{K T} - 1}.$$
\end{exam}

\begin{exam}[Constant coefficients]\label{exam:constant-coefficients} Let $b, a, \gamma$ be constants on $\bR^D$ with $b \neq 0$ and $\Sigma \in \bS^D_{++}$, where $a$ might be degenerate.
  In this situation we can find solutions $\alpha, \beta$ of the PIDEs \eqref{system:PDEs} which do not depend on $y$. Namely, by letting $\alpha(t, \cdot) = \alpha(t)$, $\beta(t, \cdot) = \beta(t)$ and plugging them into \eqref{system:PDEs} we obtain a system of ODEs for $\alpha, \beta$ which possesses the following solutions on $[0, T]$,
	\begin{equation*}
		\left\{ \begin{aligned} 
			&\alpha(t) = \e^{-(T-t) K}, \\
			& \beta(t) =  -(T-t)^2 \frac{\lambda D}{4} K  -  (T-t) \frac{\lambda}{2} \log\bb( \frac{(\lambda \pi)^D}{\det(\Sigma)}\bb),
		\end{aligned} \right.
	\end{equation*}
	where $K: = b^\tran \Sigma^{-1} b>0$. It is also easy to check that the assumptions of \cref{prop:optimal-value-function} and \cref{thm:verification} are fulfilled for $\alpha, \beta$. Then the value function is explicitly given by 
	\begin{align*}
		V^*(t, x, y|\hat{w}) = V^*(t, x|\hat{w}) := \e^{-(T-t) K} (x-\hat{w})^2 -(T-t)^2 \frac{\lambda D}{4} K  -  (T-t) \frac{\lambda}{2} \log\bb(\frac{(\lambda \pi)^D}{\det(\Sigma)}\bb).
	\end{align*}
	The SDE for the optimal wealth $X^*$ and the Lagrange multiplier $\hat{w}$ are respectively the same as those in \cref{exam:propotional-coefficients} where one notices here that coefficients $b, a, \gamma, \Sigma$ are constant\footnote{If $\nu(\{e \in E : b^\tran \Sigma^{-1}\gamma e =1\}) = 0$, then applying the same argument as in  \cref{remark:jump-size-Z}\eqref{remark:jump-size-Z:AC} yields $\Delta Z \neq 1$ on $[0, T]$, and hence, \eqref{eq:explicit-solution-optimal-SDE} is usable.}.
\end{exam}

In the following we continue to specialize \cref{exam:constant-coefficients} to the case of no jumps.
\begin{exam}[Constant coefficients, $\nu \equiv 0$ and $D =1$]\label{exam:constant-coeff-D=1} This is the setting considered by Wang and Zhou \cite{WZ20}. For $a>0$, $b \neq 0$, letting $\sigma := a$ and $\rho := \frac{b}{a}$ we get the value function
	\begin{align*}
		V^*(t, x|\hat{w}) = \e^{- \rho^2(T- t)} (x - \hat{w})^2 - \frac{\lambda}{2} \bb(\frac{\rho^2}{2}(T-t)^2 + (T-t) \log\bb(\frac{\lambda \pi}{\sigma^2}\bb)\bb)
	\end{align*}
	which coincides with that in \cite[Theorem 3.1]{WZ20}. The associated SDE for the optimal wealth $X^*$ in our setting is
	\begin{align}\label{eq:explicit-solution-SDE-D=1}
		\od X^*_s = - \rho^2 (X^*_s - \hat{w}) \od s -  \rho (X^*_s - \hat{w}) \od W_s + \sqrt{\frac{\lambda}{2}} \e^{\frac{\rho^2}{2} (T-t)} \od \cW_s, \quad X^*_0 = x_0,
	\end{align}
	whose explicit representation is given, according to \cref{prop:explicit-SDE-multiplier}, by
	\begin{align*}
		X^*_r = \hat{w} + \bb[x_0 - \hat{w} + \sqrt{\frac{\lambda}{2}} \int_0^r \e^{\rho W_s + \frac{3}{2}\rho^2 s} \e^{\frac{1}{2} \rho^2 (T-s)} \od \cW_s \bb] \e^{-\rho W_r - \frac{3}{2}\rho^2 r}, \quad r \in [0, T].
	\end{align*}
We emphasize that the optimal exploratory SDE \eqref{eq:explicit-solution-SDE-D=1} is different from that in \cite[Eq. (27)]{WZ20} which is formulated in our notation as
\begin{align}\label{eq:explicit-solution-SDE-WZ20}
	\od \wt X^*_s = - \rho^2 (\wt X^*_s - \hat{w}) \od s + \sqrt{\rho^2 (\wt X^*_s - \hat{w})^2 + \frac{\lambda}{2} \e^{\rho^2 (T-s)}} \od W_s, \quad \wt X^*_0 = x_0.
\end{align}
However, solutions $X^*$ of \eqref{eq:explicit-solution-SDE-D=1} and $\wt X^*$ of \eqref{eq:explicit-solution-SDE-WZ20} have the same (finite-dimensional) distribution because of the uniqueness in law of  \eqref{eq:explicit-solution-SDE-WZ20}.
\end{exam}

\subsection{Relation to the sample state process}\label{sec:sample_state_process}

The continuous-time RL algorithms designed in \cite{JZ22,JZ23} rely on the sample state process for the actual learning task, which is the solution of an SDE that models the state dynamics evaluated along a randomized control. In this subsection, we explain the relation between  our exploratory dynamics and the sample state process. To avoid technicalities and to highlight the key ideas, we focus on the situation in  \cref{exam:constant-coeff-D=1}. 

The construction of the sample state process in \cite{JZ22,JZ23} starts with a feedback control  $\pi(\cdot|t,x)$ with values in the space of probability density functions. If the portfolio value is in state $X_t=x$ at time $t$, the portfolio position $H_t$ is randomly drawn from the probability density $\pi(\cdot|t,x)$. The actual drawing of the portfolio position is performed based on a family $(Z_t)_{t\in [0,T]}$ of independent random variables in \cite{JZ23}, which are uniformly distributed on $[0,1]$.  This family is supposed to be independent of the stochastic processes driving the stock price, hence, of the Brownian motion $W$  in the context of  \cref{exam:constant-coeff-D=1}. Denoting by $h_\pi(t,x;\cdot)$ the quantile function of the distribution with density $\pi(\cdot|t,x)$, the random drawing of the portfolio position can be made explicit by letting $H_t=h_\pi(t, X_t, Z_t)$, which formally leads to the SDE
\begin{equation}\label{eq:sample_1}
	  \od X^{h_\pi}_t= h_\pi(t, X^{h_\pi}_t, Z_t)(b \od t +\sigma  \od W_t),\quad X^{h_\pi}_0=x.
\end{equation}
In the terminology of \cite{JZ23}, the solution of this SDE is the \emph{sample state process}  corresponding to the \emph{action process} $h_\pi(t, X_t, Z_t)$, which is sampled from the given density $\pi$.\footnote{The authors in \cite{JZ23} do not give an explicit construction of the action process, whereas we use the construction based on the quantile function in this subsection. It is, however, clear from the presentation in \cite{JZ23} that iid uniform random variables $(Z_t)_{t\in [0,T]}$ independent of $W$ are applied for the control randomization mechanism in \cite{JZ23}.}

Given the optimality of Gaussian randomization, which in the context of \cref{exam:constant-coeff-D=1} has first been derived in \cite{WZ20}, we now specialize to the case that $\pi(\cdot|t,x)$ is a Gaussian density with mean $\mu(t,x)$ and standard deviation $\vartheta(t, x) : = \theta(t,x)^{\frac{1}{2}}$. Then,  $h_\pi(t, X_t,Z_t)=\mu(t,X_t)+ \vartheta(t,X_t)\xi_t$, where $(\xi_t)_{t\in [0,T]}$ is an independent family of standard Gaussians constructed from $(Z_t)_{t\in [0,T]}$ via $\xi_t=\Phi^{-1}(Z_t)$ ($\Phi$ denoting the cumulative distribution function of the standard normal distribution). Thus, \eqref{eq:sample_1} becomes (suppressing the superscript $h_\pi$)
\begin{equation}\label{eq:sample_2}
	\od X_t=b \mu(t,X_t)\od t + b \vartheta(t,X_t)\xi_t \od t+\sigma \mu(t,X_t) \od W_t+ \sigma \vartheta(t,X_t)\xi_t \od W_t ,\quad X_0=x.
\end{equation}

If we write $\mathbb{G}=(\mathcal{G}_t)_{t\in [0,T]}$ for the filtration generated by $W$ and $\xi$, then $\xi$ becomes an adapted process, but it is well-known that non-constant families of independent, identically distributed random variables indexed by continuous time cannot be measurable with respect to the standard product $\sigma$-field, see, e.g., \cite[Proposition 2.1]{Su06}. Hence, $\xi$ fails to be progressively measurable in the usual sense of stochastic calculus, and so the SDE \eqref{eq:sample_2} cannot be studied in the classical SDE framework. To deal with this problem, the authors in \cite{JZ23} refer to the framework of rich Fubini extensions developed in \cite{Su06, SZ09}. Roughly speaking, one can construct an extension $\bar \Leb$ of the Lebesgue measure on $[0,T]$ beyond the $\sigma$-field of Lebesgue-measurable sets and a suitable probability space such that the process $\xi$ becomes measurable with respect to some appropriate Fubini extension of the classical product measure space, see \cite[Theorem 2]{Po10} for a precise statement. Here the notion of a \emph{Fubini extension} refers to the property that a suitable reformulation of Fubini's theorem on iterated integration is still valid, see \cite{Su06}. Then, the Lebesgue integrals in \eqref{eq:sample_2} can be replaced by integrals with respect to the extension $\bar \Leb$ of the Lebesgue measure (but we will write $\od t$ in place of $\bar \Leb(\od t)$ below to simplify the notation). However, with this construction, it is still not clear to us, how to extend the It\^o integral to integrands which only satisfy this weaker measurability property ensured by the rich Fubini construction. In the following informal discussion, we make the conjecture that  It\^o's integral can be properly extended such that the standard results of stochastic calculus are still in force. 

Under this conjecture, we may consider
$$
A^\xi_t=\int_0^t \xi_s \od s,\quad W^\xi_t=\int_0^t \xi_s \od W_s.
$$
Then, \eqref{eq:sample_2} can be rewritten as
$$
\od X_t=b \mu(t,X_t)\od t + b \vartheta(t,X_t) \od A^\xi_t+\sigma \mu(t,X_t) \od W_t+ \sigma \vartheta(t,X_t) \od W^\xi_t ,\quad X_0=x.
$$
By the above conjecture, $W^\xi$ is a continuous martingale with
$$
\langle W^\xi \rangle_t= \int_0^t \xi^2_s \od s,\quad \langle W^\xi, W \rangle_t= \int_0^t \xi_s \od s.
$$
Sun's exact law of large numbers \cite[Theorem 2.6]{Su06}  developed in the framework of rich Fubini extensions  now implies
$$
\int_0^t \xi^q_s \od s =\int_0^t \bE[\xi^q_s]\od s = \begin{cases} 0, & q=1, \\ t,& q=2.\end{cases}
$$ 
Hence $A^\xi \equiv 0$ and, by L\'evy's characterization, $W^\xi$ is a Brownian motion independent of $W$. Thus, the SDE \eqref{eq:sample_2} for the sample state process with Gaussian randomization  becomes
\begin{equation}\label{eq:sample_3}
	\od X_t=b \mu(t,X_t)\od t +\sigma \mu(t,X_t) \od W_t+ \sigma \vartheta(t,X_t)\od W^\xi_t ,\quad X_0=x.
\end{equation}
This is exactly our form of the exploratory SDE \eqref{eq:exploratory-SDE-jump} (with a different notation for the additional independent Brownian motion), when specialized to the setting of  \cref{exam:constant-coeff-D=1} and applied to feedback controls with Gaussian randomization. 

Let us now look at the special case, when the control randomization is performed according to the standard Gaussian distribution independent of time and state, i.e., $\mu(t,x)=0$ and $\vartheta(t,x)=1$. Then, the corresponding portfolio wealth process $X_t=x+\sigma W^\xi_t$ in \eqref{eq:sample_3} is independent of $W$, and, hence, independent of the stock price dynamics. This appears to be counter-intuitive and illustrates that the SDE   for the sample state process may not be able properly describe the portfolio wealth along a randomized portfolio (with continuous re-sampling in the randomization mechanism). In order  justify the use of this SDE, we discretize the portfolio  process $H_t=\xi_t$ on a time grid $0=t_0<t_1<\cdots<t_n=T$ via $H^n_t=\xi_{t_j}$, for $t\in (t_j, t_{j+1}]$. Then, $H^n$ is $\mathbb{G}$-predictable (since it is adapted and left-continuous), and, hence, its wealth process with initial endowment $x$
$$
X^n_t=x+\int_0^t H^n_s(b \od s + \sigma \od W_s)=x+b\sum_{j=0}^{n-1} \xi_{t_j} (t_{j+1}\wedge t -t_j\wedge t)+ \sigma\sum_{j=0}^{n-1} \xi_{t_j} (W_{t_{j+1}\wedge t} -W_{t_j\wedge t})
$$  
is well-defined in the framework of the classical stochastic integration theory. It is not difficult to check that the first sum converges to zero by the law of large numbers (cp. \cref{sec:dift-part}) and the second sum weakly converges to a Brownian motion independent of $W$ by Donsker's invariance principle.  Hence, the  ``wealth process'' $X_t=x+\sigma W^\xi_t$ for the non-predictable portfolio position process $H_t=\xi_t$ suggested by the sample state process in the continuous-time RL literature can be properly interpreted as the weak limit of the wealth processes of the approximating sequence of predictable randomized portfolio positions $H^n$. 
The limit result in this illustrative example is, of course, the simplest special case of our general result, \cref{thm:limit-distribution-integrator}, which motivates our formulation of the exploratory SDE.

Summarizing, the above discussion suggests that our formulation \eqref{eq:exploratory-SDE-jump} of the exploratory SDE is one way to give a mathematically rigorous meaning to the SDE which models the sample state process in the recent RL literature. Moreover, \cref{thm:limit-distribution-integrator} provides a justification for the use of this SDE formulation as a limit of a natural control randomization mechanism in discrete time. While the results in this paper are presented for the mean-variance portfolio selection problem, it is obvious, how to transfer the derivation of our exploratory SDE based on \cref{thm:limit-distribution-integrator} to more general problems with controlled diffusion and jumps, provided the control enters the diffusion part linearly. The case of general dependence of the diffusion coefficient on the control requires more advanced tools from the theory of random measures and is discussed in our follow-up work \cite{BT24}. 

\begin{rema}
	In our general setting with $D$ stocks, the Gaussian randomization leads to an action process of the form
	$$
	\mu(t,X_t,Y_t)+ \vartheta(t,X_t,Y_t)\xi_t
	$$
	where the mean $\mu(t,x,y)$ takes values in $\mathbb{R}^D$, $\vartheta(t,x,y)$ is the positive definite root of the positive definite $D\times D$ covariance matrix $\theta(t,x,y)$ and each $\xi_t$ is a vector of $D$ independent standard Gaussians. Following the same argument as above, we will consider the ``processes''
	$$
	W^{\xi,(d,d')}_t=\int_0^t \xi^{(d)}_s \od  W^{(d')}_s,
	$$
	which additionally drive the SDE for the sample state process. Then, by the L\'evy characterization as above, $(W^{d'},W^{\xi,(d,d')}; d=1,\ldots D,\; d'=1,\ldots D)$ is a $(D^2+D)$-dimensional Brownian motion. Thus, making the sample state process rigorous by the same reasoning as above, the diffusion part  is driven by a $(D^2+D)$-dimensional Brownian motion as in our formulation \eqref{eq:exploratory-SDE-jump} of the exploratory SDE.  
	\end{rema}

\section{Weak convergence of discrete-time integrators} \label{sec:proof-weak-convergence}

This section provides the proof of  \Cref{thm:limit-distribution-integrator}. Throughout this part, let $c_D$ denote a positive constant depending only on $D$, and its value might vary in each appearance. The time-change $\sigma^n$ is extended constantly over  $t \in [T, \infty)$. To cover necessary test functions for the proof of \cref{thm:limit-distribution-integrator}, we use the following function space.

\begin{defi} For $\bfD = D^2 + 3D$, we let $g \in C^2_*(\bR^{\bfD})$ if the following conditions hold:
	\begin{enumerate}[\quad ($\rmG$1):]
		\itemsep2pt
		\item\label{item:G-hessian} $g \in C^2(\bR^{\bfD})$ with $g(0) = 0$ and $\|\scrD^2 g\|_\infty <\infty$;
		
		\item \label{item:G-brownian-part} for $1 \le d \vee d' \le D^2 +D$, the function  $\pd^2_{d, d'} g$ takes value $0$ in a neighborhood of $0$;
		
		\item \label{item:G-control-brownian-part}$c_{(\rmG\ref{item:G-control-brownian-part})} : = \max_{1 \le d \le D^2 +D} \|\pd_d g(0_{D^2 +D}, \cdot)\|_\infty <\infty$, where $0_{D^2 +D}$ is the vector $0$ in $\bR^{D^2+D}$;
		
		\item \label{item:G-jump-part} $c_{(\rmG\ref{item:G-jump-part})}: = \max_{D^2+D+1 \le d \le \bfD} \|\pd_{d} g\|_\infty <\infty$ and $\pd_d g(0) = 0$ for any $D^2+D+1 \le d \le \bfD$.
	\end{enumerate}
\end{defi}

\begin{prop}\label{lemm:convergence-jump-part}
For any $g \in C^2_*(\bR^{\bfD})$, one has when $n \to \infty$ that
	\begin{align}\label{eq:lemm:convergence-jump-part}
		\sum_{i=1}^{n} \bb| \bE[g(\Delta_{n, i} \bmZ^n)|\cF_{n, i-1}] - (t^n_i - t^n_{i-1}) \int_{\bR^{2D}} g(0, e, u) \nu^\psi_{L}(\od e, \od u)\bb| \xrightarrow{\bfL_1(\bP)} 0,
	\end{align}
	where $\bmZ^n$ is given in \cref{sec:distributional-limit}.
\end{prop}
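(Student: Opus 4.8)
The plan is to combine a second-order Taylor expansion in the ``Brownian'' directions with a Dynkin-type identity for the L\'evy process $J$. Write the increment as $\Delta_{n,i}\bmZ^n=\vect(B^n_i,X^n_i)$, where $B^n_i:=\vect(\Delta_{n,i}W,\eta^H_{n,i}\pr\Delta_{n,i}W)\in\bR^{D^2+D}$ collects the coordinates indexed by $1,\dots,D^2+D$ and $X^n_i:=\Delta_{n,i}L^{n,\psi}=(\Delta_{n,i}J,\psi(\Delta_{n,i}J)\xi_{n,i})^\tran\in\bR^{2D}$ the jump coordinates. Since $g\in C^2$, expanding in the first $D^2+D$ variables about $(0,X^n_i)$ gives
\[
 g(\Delta_{n,i}\bmZ^n)=g(0_{D^2+D},X^n_i)+\sum_{d=1}^{D^2+D}\pd_d g(0_{D^2+D},X^n_i)\,[B^n_i]^{(d)}+R^n_i ,
\]
with $R^n_i:=\sum_{d,d'=1}^{D^2+D}[B^n_i]^{(d)}[B^n_i]^{(d')}\int_0^1(1-s)\pd^2_{d,d'}g(sB^n_i,X^n_i)\od s$. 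All three terms are integrable (quadratic growth of $g$ from $(\rmG1)$, and the $\bfL_2$-bounds on $\Delta_{n,i}W$, $\eta^H_{n,i}$, $\Delta_{n,i}J$), and I would first check that $\bE[\,\cdot\,|\cF_{n,i-1}]$ annihilates the middle term: each $[B^n_i]^{(d)}$ is a scalar (either $1$ or a component of $\eta^H_{n,i}$) times a component of $\Delta_{n,i}W$, which is centered and independent of $(\xi_{n,i},\Delta_{n,i}J,\cF_{n,i-1})$, while $\pd_d g(0_{D^2+D},X^n_i)$ depends only on $(\Delta_{n,i}J,\xi_{n,i})$ and is bounded by $(\rmG3)$; conditioning additionally on $(\cF_{n,i-1},\xi_{n,i},\Delta_{n,i}J)$ and using the tower property kills it.

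Next, $g(0_{D^2+D},X^n_i)=\phi(X^n_i)$ with $\phi:=g(0_{D^2+D},\cdot)$, and since $(\Delta_{n,i}J,\xi_{n,i})$ is independent of $\cF_{n,i-1}$, the quantity $\bE[g(0_{D^2+D},X^n_i)\,|\,\cF_{n,i-1}]=\bE[\Psi(\Delta_{n,i}J)]$ is deterministic, where $\Psi(e):=\int_{\bR^D}g(0_{D^2+D},e,\psi(e)u)\,\varphi_D(u)\od u$; moreover $\int_{\bR^{2D}}g(0,e,u)\nu^\psi_L(\od e,\od u)=\int_{\RO}\Psi(e)\,\nu(\od e)$ by the change-of-variables identity for $\nu^\psi_L$ recalled in \cref{sec:distributional-limit}. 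Hence, by the triangle inequality and the vanishing of the linear term,
\[
 \text{LHS of }\eqref{eq:lemm:convergence-jump-part}\ \le\ \sum_{i=1}^n\Big|\bE[\Psi(\Delta_{n,i}J)]-\Delta t^n_i\!\!\int_{\RO}\!\!\Psi\,\od\nu\Big|+\sum_{i=1}^n\big|\bE[R^n_i\,|\,\cF_{n,i-1}]\big| ,
\]
so after taking expectations it suffices to prove that the first (deterministic) sum tends to $0$ and $\sum_i\bE[|R^n_i|]\to 0$. For the deterministic sum, using $(\rmG4)$ one verifies that $\Psi$ is bounded-Lipschitz with $\Psi(0)=0$ and $\scrD\Psi(0)=0$ (the latter since $\psi\ge 0,\ \psi(0)=0$ force $\scrD\psi(0)=0$, and $\pd_dg(0)=0$ on the jump coordinates), and that $\Psi\in C^2$ with $\|\scrD^2\Psi\|_\infty<\infty$ (differentiation under the integral, using $\|\scrD^2 g\|_\infty,\|\scrD\psi\|_\infty,\|\scrD^2\psi\|_\infty<\infty$). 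Then the operator $(\cL_J\Psi)(x):=\int_{\RO}\big(\Psi(x+e)-\Psi(x)-\scrD\Psi(x)^\tran e\big)\nu(\od e)$ is well defined, bounded and continuous (square integrability of $\nu$ and the Lipschitz bound on $\Psi$ enter here), with $(\cL_J\Psi)(0)=\int_{\RO}\Psi\,\od\nu$. It\^o's formula for the $\bfL_2$-martingale $J$ yields $\bE[\Psi(J_h)]=\int_0^h\bE[(\cL_J\Psi)(J_s)]\od s$, hence $|\bE[\Psi(J_h)]-h\,(\cL_J\Psi)(0)|\le h\,\bar\rho(h)$ with $\bar\rho(h):=\sup_{0<s\le h}|\bE[(\cL_J\Psi)(J_s)-(\cL_J\Psi)(0)]|\to 0$ as $h\to 0$ (right-continuity of $J$ at $0$, continuity and boundedness of $\cL_J\Psi$, dominated convergence); summing, $\sum_i|\bE[\Psi(\Delta_{n,i}J)]-\Delta t^n_i(\cL_J\Psi)(0)|\le T\,\bar\rho(|\Pi_n|)\to 0$.

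The main obstacle is the remainder bound $\sum_i\bE[|R^n_i|]\to 0$, and here $(\rmG2)$ is indispensable: each $\pd^2_{d,d'}g$ with $d\vee d'\le D^2+D$ vanishes on some ball $B_\bfD(\delta_0)$, $\delta_0>0$, so $R^n_i\ne 0$ forces $\|B^n_i\|\ge\delta_0$ or $\|X^n_i\|\ge\delta_0$, whence
\[
 |R^n_i|\le c_D\|\scrD^2 g\|_\infty\,\|B^n_i\|^2\big(\1_{\{\|B^n_i\|\ge\delta_0\}}+\1_{\{\|X^n_i\|\ge\delta_0\}}\big),\qquad \|B^n_i\|^2=\|\Delta_{n,i}W\|^2\big(1+\|\eta^H_{n,i}\|^2\big).
\]
Using the conditional independence (given $\cF_{n,i-1}$) of $\Delta_{n,i}W$ from $(\eta^H_{n,i},\Delta_{n,i}J,\xi_{n,i})$ and $\psi(\Delta_{n,i}J)\le\|\scrD\psi\|_\infty\|\Delta_{n,i}J\|$, one splits each indicator along thresholds decaying slower than $|\Pi_n|$ (e.g.\ $\{\|\Delta_{n,i}W\|^2>\sqrt{|\Pi_n|}\}$, $\{\|\Delta_{n,i}J\|>|\Pi_n|^{1/4}\}$ and their complements): the ``large increment'' pieces are dominated after summation by $T\,\bE[\chi^2_D\1_{\{\chi^2_D>|\Pi_n|^{-1/2}\}}]$ and $T\,|\Pi_n|^{1/2}\int_{\RO}\|e\|^2\nu$, which vanish; the remaining pieces force $\|\eta^H_{n,i}\|$ or $\|\xi_{n,i}\|$ to be large and are handled via \cref{assumption-eta} (uniform integrability of $\{\|\eta^H_{n,i}\|^2\}$) together with Gaussian tail bounds, through an $\ep$--$K$ argument giving $\sup_{n,i}\bE[(1+\|\eta^H_{n,i}\|^2)\1_{\{\cdots\}}]\to 0$; each contribution is then multiplied by $\Delta t^n_i$ and summed against $\sum_i\Delta t^n_i\le T$. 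I expect this bookkeeping — in particular tracking the randomness $\xi_{n,i}$ shared by $\eta^H_{n,i}$ and $X^n_i$ — to be the technical heart of the proof, the L\'evy limit and the cancellation of the linear term being comparatively routine.
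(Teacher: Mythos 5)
Your proposal is correct and arrives at \eqref{eq:lemm:convergence-jump-part} by a genuinely different decomposition of the increment. The paper conditions on $\cF_{n,i-1}$, freezes $(\eta,\xi)$, and applies It\^o's formula to $s\mapsto g_{\eta,\xi}(W_s-W_{t^n_{i-1}},J_{s-}-J_{t^n_{i-1}})$ over each subinterval, so that after the martingale parts vanish the conditional expectation splits into a Brownian piece $G^W_{n,i}$ (a $\od s$-integral of second derivatives evaluated \emph{along the path}) and a jump piece $G^J_{n,i}$, and it then needs the extra comparison of $G^J_{n,i}(\eta,\xi)$ with $G^J_{n,i}(0,\xi)$ in \eqref{eq:convergence-L1-GJ-part-1}. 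You instead perform a \emph{static} Taylor expansion of $g$ in the $D^2+D$ Brownian coordinates of the increment around the point $(0_{D^2+D},\Delta_{n,i}L^{n,\psi})$: the linear term is annihilated by conditioning on $\cF_{n,i-1}\vee\sigma(\xi_{n,i},\Delta_{n,i}J)$ and using that $\Delta_{n,i}W$ is centered and independent of that $\sigma$-field (with $(\rmG\ref{item:G-control-brownian-part})$ supplying integrability of the product), the zeroth-order term is $\eta$-free from the outset — so the analogue of \eqref{eq:convergence-L1-GJ-part-1} is not needed — and all the $\eta$-dependence is pushed into the quadratic remainder, which you control exactly as the paper controls \eqref{eq:convergence-L1-GW}: $(\rmG\ref{item:G-brownian-part})$ forces the full increment to exceed a fixed threshold for the remainder to be nonzero, and the independence structure, the Gaussian tails, \cref{assumption-eta} and $\int_{\RO}\|e\|^2\nu(\od e)<\infty$ make the summed contribution vanish via the $\ep$--$K$ scheme you describe. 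For the $\eta$-free jump term you use Dynkin's formula for $\Psi(e)=\int_{\bR^D} g(0_{D^2+D},e,\psi(e)u)\varphi_D(u)\od u$ together with boundedness and continuity of $x\mapsto\int_{\RO}(\Psi(x+e)-\Psi(x)-\scrD\Psi(x)^\tran e)\nu(\od e)$ and the identity that its value at $0$ equals $\int_{\RO}\Psi\,\od\nu$; your justification that $\psi\ge 0$ with $\psi(0)=0$ forces $\scrD\psi(0)=0$, combined with $(\rmG\ref{item:G-jump-part})$, is precisely what makes $\scrD\Psi(0)=0$ and hence this identity valid. The paper reaches the same limit instead by dominated convergence on the remainder representation in \eqref{eq:convergence-L1-GJ-part-2}. (One cosmetic slip: $\Psi$ is Lipschitz but not bounded in general — it only satisfies $|\Psi(e)|\lee c\|e\|^2$ — but your argument only uses the Lipschitz bound and the boundedness of the generator applied to $\Psi$, so nothing breaks.) Net effect: your route trades the paper's three convergences for two, at the cost of a single heavier remainder estimate; the hypotheses enter in the same roles in both proofs, and I see no gap.
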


\begin{proof} With a slight abuse of notation, in the sequel we use symbols $\eta, \xi$ without any sub-indices to denote \textit{deterministic} vectors in $\bR^D$, whereas $\eta^H_{n, i}$ and $\xi_{n, i}$ are random vectors introduced in \cref{sec:exploration-procedure}. Recall that 
		$$\Delta_{n, i} \bmZ^n = \vect(\Delta_{n, i} W^n, \Delta_{n, i} M^n,	\Delta_{n, i} L^{n, \psi}) = \vect(\Delta_{n, i} W,\,\eta^H_{n,i} \pr \Delta_{n, i} W,\, \Delta_{n, i} J,\, \psi(\Delta_{n,i} J) \xi_{n,i}).$$
	\textbf{\textit{Step 1.}} Since $g(0) = 0$ by $(\rmG\ref{item:G-hessian})$ and $\pd_dg(0) = 0$ for $D^2+D+1 \le d \le \bfD$ by $(\rmG\ref{item:G-jump-part})$, an argument using Taylor expansion shows
	\begin{align}\label{eq:estimate-g}
		|g(0, e, u)| \le c_D \|\scrD^2 g\|_\infty(\|e\|^2 + \|u\|^2), \quad e, u \in \bR^D.
	\end{align}
Since $\nu_{L}^\psi$ is a square integrable L\'evy measure, it ensures that $\int_{\bR^{2D}} |g(0, e, u)| \nu_{L}^\psi(\od e, \od u) <\infty$. Moreover, for any $n, i$, since
\begin{align*}
	\bE[\|\Delta_{n, i} \bmZ^n\|^2] & = \bE[\|\Delta_{n, i} W\|^2 + \|\eta^H_{n, i} \pr \Delta_{n, i} W\|^2 + \|\Delta_{n, i} J\|^2 + \psi(\Delta_{n, i} J)^2\|\xi_{n, i}\|^2]\\
	& \le (t^n_i - t^n_{i-1})\bb(D + D^2 + \int_{\RO}\|e\|^2 \nu(\od e) + D\|\scrD\psi\|_\infty^2  \int_{\RO}\|e\|^2 \nu(\od e)\bb) <\infty,
\end{align*}
together with the fact that $g$ has at most quadratic growth at infinity as $\|\scrD^2 g\|_\infty<\infty$ by $(\rmG\ref{item:G-hessian})$, it implies that $\bE[|g(\Delta_{n, i} \bmZ^n)|] <\infty$.

	\smallskip

\noindent\textbf{\textit{Step 2.}} To shorten the notation, for each $\eta, \xi \in \bR^D$, we define  $g_{\eta, \xi} \colon \bR^{2D} \to \bR$ by 
	\begin{align*}
		g_{\eta, \xi}(w, j) : = g(w, \eta\pr w, j, \psi(j) \xi), \quad w, j \in \bR^D.
	\end{align*}
	Then, $g_{\eta, \xi} \in C^2(\bR^{2D})$. Furthermore, for any $d, d' = 1, \ldots, D$, the partial derivatives of $g_{\eta, \xi}$ are given, with the convention $\eta^{(0)}:=1$ and $z : = (w, \eta\pr w, j, \psi(j) \xi) \in \bR^{\bfD}$, by
	\begin{align}
		&\pd_d g_{\eta, \xi}(w, j)  = \sum_{k=0}^D \eta^{(k)} \pd_{d + kD}  g(z), \quad  \pd^2_{d,d'} g_{\eta, \xi}(w, j)   = \sum_{k, l =0}^D \eta^{(k)} \eta^{(l)} \pd^2_{d + k D, d'+l D} g(z),\label{eq:partial-derivative-1}\\
		 &\pd_{D+d} g_{\eta, \xi}(w, j)  = \pd_{D^2 +D+d} g(z) + \pd_d\psi(j) \sum_{k =1}^D \xi^{(k)} \pd_{D^2 + 2D + k} g(z), \label{eq:partial-derivative-2} \\
		 &\pd^2_{D+d', D+d} g_{\eta, \xi}(w, j)  = \pd^2_{D^2+D+d', D^2+D+d}g(z)  + \pd^2_{d', d} \psi(j) \sum_{k=1}^D \xi^{(k)} \pd_{D^2+2D+k}g(z) \notag \\
		& \hspace{50pt} + \pd_d \psi(j)  \sum_{k=1}^D   \xi^{(k)} \bb[\pd^2_{D^2+D+d', D^2+2D+k}g(z) + \pd_{d'} \psi(j)\sum_{l=1}^D \xi^{(l)} \pd^2_{D^2+2D+l, D^2+2D+k} g(z)\bb].\notag 
	\end{align}
	Hence, there exists a constant $c_{\eqref{eq:hessian-g-estimate}}: = c(D, \|\scrD \psi\|_\infty, \|\scrD^2\psi\|_\infty, \|\scrD^2 g\|_\infty, c_{(\rmG\ref{item:G-jump-part})})>0$ such that
	\begin{align}\label{eq:hessian-g-estimate}
		\max_{1\le d, d' \le D} \|\pd^2_{D+d', D+d}g_{\eta, \xi}\|_\infty \le c_{\eqref{eq:hessian-g-estimate}}  (1+ \|\xi\|^2).
	\end{align}
	We also define the function $R^g_1$, which represents the remainder term in a Taylor expansion of $g_{\eta, \xi}$, by setting for $w, j, \eta, \xi, e \in \bR^D$ that
	\begin{align*}
		R^g_1(w, j; \eta, \xi; e) & := g_{\eta, \xi}(w, j+e) - g_{\eta, \xi}(w, j) - \sum_{d=1}^D e^{(d)} \pd_{D+d}g_{\eta, \xi}(w, j).
	\end{align*}
	Due to condition $(\rmG\ref{item:G-control-brownian-part})$, Taylor expansion implies for any $a\in \bR^{D^2+D}$, $a' \in \bR^{2D}$ that
	\begin{align}\label{eq:estimate-g-component-1}
		|g(a, a') -  g(0, a')| \le c_D ( c_{(\rmG\ref{item:G-control-brownian-part})} \|a\| + \|\scrD^2 g\|_\infty \|a\|^2) \le c_{\eqref{eq:estimate-g-component-1}} (\|a\| + \|a\|^2)
	\end{align}
for some constant $c_{\eqref{eq:estimate-g-component-1}}: = c_{\eqref{eq:estimate-g-component-1}}(D, \|\scrD^2 g\|_\infty,  c_{(\rmG\ref{item:G-control-brownian-part})})>0$. Hence,
	\begin{align}\label{eq:remainder-lipschitz}
		& |R^g_1(w, j; \eta, \xi;e) - R^g_1(w,j;0,\xi;e)| \notag \\
		&  \le  |g(w, \eta\pr w, j+e, \psi(j+e)\xi) -g(0, j+e, \psi(j+e)\xi)| \notag \\
		& \quad + |g(w, 0, j+e, \psi(j+e)\xi) - g(0, j+e, \psi(j+e)\xi)| \notag \\
		& \quad + |g(w, \eta \pr w, j, \psi(j)\xi) -g(0, j, \psi(j)\xi)| + |g(w, 0, j, \psi(j)\xi) -g(0, j, \psi(j)\xi)| \notag \\
		& \quad  + \sum_{d=1}^D |e^{(d)}| \B[\B| \pd_{D^2+D+d} g(w, \eta\pr w, j, \psi(j) \xi) - \pd_{D^2+D+d} g(w, 0, j, \psi(j) \xi)\B| \notag \\
		& \qquad + |\pd_d \psi(j)| \sum_{k=1}^D |\xi^{(k)}| \B| \pd_{D^2 +2D+k} g(w, \eta\pr w, j, \psi(j)\xi) - \pd_{D^2 +2D+k} g(w, 0, j, \psi(j)\xi)\B|\B] \notag \\
		& \le 2 c_{\eqref{eq:estimate-g-component-1}}(\|(w, \eta\pr w)\| + \|(w, \eta\pr w)\|^2 + \|w\| + \|w\|^2) + c_{\eqref{eq:remainder-lipschitz}} \|e\| (1+  \|\xi\|)\|\eta\pr w\|\\
		& \le 4 c_{\eqref{eq:estimate-g-component-1}}(\|w\| + \|w\|^2 + \|\eta \pr w\| + \|\eta \pr w\|^2) + c_{\eqref{eq:remainder-lipschitz}} \|e\| (1+  \|\xi\|)\|\eta\pr w\|, \notag
	\end{align}
	where  $c_{\eqref{eq:remainder-lipschitz}}: = c(D,\|\scrD \psi\|_\infty,\|\scrD^2 g\|_\infty)>0$. Moreover, the Taylor remainder $R^g_1$ is estimated by
	\begin{align}\label{eq:estimate:remainder}
		\sup_{(w, j) \in \bR^{2D}} |R^g_1(w, j; \eta, \xi;e)| & \le c_D\max_{1\le d, d' \le D} \|\pd^2_{D+d, D+d'} g_{\eta, \xi}\|_\infty \|e\|^2  \le c_{\eqref{eq:estimate:remainder}} (1+\|\xi\|^2) \|e\|^2,
	\end{align}
	where $c_{\eqref{eq:estimate:remainder}} : = c_Dc_{\eqref{eq:hessian-g-estimate}}$.

	\smallskip

\noindent	\textbf{\textit{Step 3.}}
	For $n \ge 1$ and $1\le  i\le n$, since $\eta^H_{n, i}$ is $\cF_{n, i-1} \vee \sigma\{\xi_{n, i}\}$-measurable and $(\Delta_{n, i} W$, $\Delta_{n, i} J)$ is independent of $\cF_{n, i-1} \vee \sigma\{\xi_{n, i}\}$, we get, a.s., 
\begin{align*}
	\bE[g(\Delta_{n, i} \bmZ^n) |\cF_{n, i-1}] &= \bE \B[\, \bE \B[g(\Delta_{n, i} W, \eta^H_{n, i}\pr \Delta_{n, i} W, \Delta_{n, i} J, \psi(\Delta_{n, i} J) \xi_{n, i})  \, \B| \cF_{n, i-1} \vee \sigma\{\xi_{n, i}\}\B]\,\B| \cF_{n, i-1}\B]\\
	& = \bE [G_{n, i}(\eta^H_{n, i}, \xi_{n, i})\,| \cF_{n, i-1}],
\end{align*}
where $G_{n, i}$ is a non-random and measurable function defined as 
\begin{align*}
	G_{n, i}(\eta, \xi) &: = \bE [g(\Delta_{n, i} W, \eta \pr \Delta_{n, i} W, \Delta_{n, i} J, \psi(\Delta_{n, i} J) \xi)], \quad \eta, \xi \in \bR^D.
\end{align*}
Given $\eta, \xi \in \bR^D$, applying  It\^o's formula for $g_{\eta, \xi} \in C^2(\bR^{2D})$ yields, a.s., 
\begin{align}\label{eq:ito-formula}
	& g(\Delta_{n, i} W, \eta \pr \Delta_{n, i} W, \Delta_{n, i} J, \psi(\Delta_{n, i} J)\xi)  = g_{\eta, \xi}(W_{t^n_i} - W_{t^n_{i-1}},  J_{t^n_i} - J_{t^n_{i-1}}) \notag\\
	& = \sum_{d = 1}^D \int_{t^n_{i-1}}^{t^n_i}  \pd_d g_{\eta, \xi}(W_s-W_{t^n_{i-1}}, J_{s-} - J_{t^n_{i-1}}) \od W^{(d)}_s \notag\\
	& \quad  + \frac{1}{2} \sum_{d,d' = 1}^D \int_{t^n_{i-1}}^{t^n_i} \pd_{d, d'}^2 g_{\eta, \xi}(W_s-W_{t^n_{i-1}}, J_{s-} - J_{t^n_{i-1}}) \od s \notag\\
	& \quad  + \int_{t^n_{i-1}}^{t^n_i} \int_{\RO}\B(g_{\eta, \xi}(W_s - W_{t^n_{i-1}}, J_{s-} - J_{t^n_{i-1}} + e) - g_{\eta, \xi}(W_s - W_{t^n_{i-1}}, J_{s-} - J_{t^n_{i-1}})\B) \wt N(\od e, \od s) \notag \\
	& \quad + \int_{t^n_{i-1}}^{t^n_i} \int_{\RO} R^g_1 (W_s - W_{t^n_{i-1}}, J_{s-} - J_{t^n_{i-1}}; \eta, \xi; e) \nu(\od e) \od s.
\end{align}
For $d = 1, \ldots, D$, we derive from \eqref{eq:partial-derivative-1} that $(w, j) \mapsto \pd_d g_{\eta, \xi}(w, j)$ has at most linear growth at infinity which hence implies that the stochastic integrals with respect to the Brownian motions are square integrable martingales. Moreover, for $w, j, j' \in \bR^D$, due to \eqref{eq:partial-derivative-2} and $(\rmG\ref{item:G-jump-part})$ one has \begin{align*}
|g_{\eta, \xi}(w, j) - g_{\eta, \xi}(w, j')| \le c_D \max_{1 \le d \le D} \|\pd_d g_{\eta, \xi}(w, \cdot)\|_\infty \|j - j'\|	\le c_D c_{(\rmG\ref{item:G-jump-part})} (1+ \|\scrD \psi\|_\infty \|\xi\|) \|j - j'\|.
\end{align*}
Then, due to the assumption $\int_{\RO} \|e\|^2 \nu(\od e) <\infty$, the stochastic integral with respect to the compensated Poisson random measure $\wt N$ in \eqref{eq:ito-formula} is also a square integrable martingale which then vanishes after taking the expectation. Hence,
\begin{align*}
G_{n, i}(\eta, \xi) =  G^W_{n, i}(\eta, \xi) + G^{J}_{n, i}(\eta, \xi),
\end{align*}
where the integrability condition is satisfied so that Fubini's theorem enables us to define 
\begin{align*}
	G^W_{n, i}(\eta, \xi) & : = \frac{1}{2} \sum_{d,d' = 1}^D \int_{t^n_{i-1}}^{t^n_i} \bE \B[\pd_{d, d'}^2 g_{\eta, \xi}(W_s-W_{t^n_{i-1}}, J_{s-} - J_{t^n_{i-1}})\B] \od s,\\
	G^J_{n, i}(\eta, \xi) &:= \int_{t^n_{i-1}}^{t^n_i} \int_{\RO} \bE \B[R^g_1(W_s -  W_{t^n_{i-1}}, J_{s-} -  J_{t^n_{i-1}} ; \eta, \xi; e)\B] \nu(\od e) \od s.
\end{align*}
To derive \eqref{eq:lemm:convergence-jump-part} it suffices to prove that the following three convergences hold:
\begin{align}
	G^n_{\eqref{eq:convergence-L1-GW}} & := \sum_{i =1}^{n}	\bE[|G^W_{n, i}(\eta^H_{n, i}, \xi_{n, i})|] \to 0, \label{eq:convergence-L1-GW}\\
	G^n_{\eqref{eq:convergence-L1-GJ-part-1}} & :=\sum_{i =1}^{n}	\bE [|G^J_{n, i}(\eta^H_{n, i}, \xi_{n, i}) - G^J_{n, i}(0, \xi_{n, i})|] \to 0,\label{eq:convergence-L1-GJ-part-1}\\
	G^{n}_{\eqref{eq:convergence-L1-GJ-part-2}} & :=\sum_{i =1}^{n}	\bE \bb[\bb|G^J_{n, i}(0, \xi_{n, i}) - (t^n_i - t^n_{i-1})\int_{\RO \times \bR^D} g(0, e, \psi(e)u) \nu(\od e) \varphi_D(u) \od u  \bb|\bb] \to 0. \label{eq:convergence-L1-GJ-part-2}
\end{align}

\smallskip

\noindent\textbf{\textit{Step 4.}} We show $G^n_{\eqref{eq:convergence-L1-GW}} \to 0$. For $1\le d, d' \le D$, by \eqref{eq:partial-derivative-1} one has
\begin{align*}
	G^W_{n, i}(\eta, \xi) & = \frac{1}{2} \sum_{d, d' = 1}^D \sum_{k, l =0}^D \eta^{(k)} \eta^{(l)} \int_{t^n_{i-1}}^{t^n_i}  \bE\B[ \pd^2_{d + kD, d'+ lD} g(W_{s} - W_{t^n_{i-1}}, \eta \pr (W_{s} -  W_{t^n_{i-1}}), \notag\\
	& \hspace{230pt}  J_{s-} - 
	J_{t^n_{i-1}}, \psi(J_{s-} - 
	J_{t^n_{i-1}})\xi)\B] \od s.
\end{align*}
Let $(\bar W, \bar J)$ be an independent copy of $(W, J)$ with the corresponding expectation $\bar \bE$. Applying Fubini's theorem we get
\begin{align}\label{eq:convergence-L1-GW-estimate}
	G^n_{\eqref{eq:convergence-L1-GW}} & \le \frac{1}{2} \sum_{d, d' = 1}^D \sum_{k, l = 0}^D \sum_{i =1}^{n}  \bE\bb[|\eta^{H, (k)}_{n, i} \eta^{H, (l)}_{n, i}| \int_{t^n_{i-1}}^{t^n_i}  \bar\bE\B[\B| \pd^2_{d + kD, d'+ lD} g(\bar W_{s} - \bar W_{t^n_{i-1}}, \eta^H_{n, i} \pr (\bar W_{s} - \bar W_{t^n_{i-1}}), \notag\\
	& \hspace{240pt} \bar J_{s-} - 
	\bar J_{t^n_{i-1}}, \psi(\bar J_{s-} - 
	\bar J_{t^n_{i-1}})\xi_{n, i})\B|\B] \od s\bb] \notag \\
	& = \frac{1}{2} \sum_{d, d' = 1}^D \sum_{k, l =0}^D \int_0^{T}   \bE \bb[ \sum_{i =1}^{n} |\eta^{H, (k)}_{n, i} \eta^{H, (l)}_{n, i}| \B| \pd^2_{d + kD, d'+ lD} g(W_{s} - W_{t^n_{i-1}}, \eta^H_{n, i} \pr (W_{s} - W_{t^n_{i-1}}), \notag\\
	& \hspace{200pt} J_{s-} - 
	J_{t^n_{i-1}}, \psi(J_{s-} - 
	J_{t^n_{i-1}})\xi_{n, i})\B| \1_{(t^n_{i-1}, t^n_i]}(s) \bb] \od s \notag\\
	& =: \frac{1}{2} \sum_{d, d' = 1}^D \sum_{k, l =0}^D \int_0^{T} \bE \b[G^n_{\eqref{eq:convergence-L1-GW-estimate}}(s)\b] \od s.
\end{align}
In order to derive \eqref{eq:convergence-L1-GW}, we prove  for any $1 \le d, d' \le D$, $0\le k, l \le D$ that
\begin{align*}
	\int_0^{T} \bE \b[G^n_{\eqref{eq:convergence-L1-GW-estimate}}(s)\b] \od s \to 0 \quad \mbox{as } n \to \infty.
\end{align*}
By the dominated convergence theorem, it is sufficient to show that
\begin{align}\label{eq:convergence-L1-GW-2-conditions}
	\lim_{n \to \infty} \bE[G^n_{\eqref{eq:convergence-L1-GW-estimate}}(s)] = 0 \quad \mbox{for all } s \in (0, T), \quad \mbox{and} \quad \int_0^{T} \sup_{n \ge 1} \bE[G^n_{\eqref{eq:convergence-L1-GW-estimate}}(s)] \od s <\infty.
\end{align}
Indeed, for each fixed $s \in (0, T)$ one has 
\begin{align*}
	\eta^H_{n, i} \pr (W_s - W_{t^n_{i-1}}) \xrightarrow{\bfL_2(\bP)} 0 \quad \mbox{and} \quad \psi(J_{s-} - J_{t^n_{i-1}}) \xi_{n, i} \xrightarrow{\bfL_2(\bP)} 0
\end{align*}
when $n \to \infty$ because of the independence, $t^n_{i-1} \to s$, and
\begin{align*}
	& \bE[\|\eta^H_{n, i} \pr (W_s - W_{t^n_{i-1}})\|^2] = \bE[\|\eta^H_{n, i}\|^2] \, \bE[\|W_s - W_{t^n_{i-1}}\|^2] = D^2 (s - t^n_{i-1}),\\
	&  \bE[\|\psi(J_{s-} - J_{t^n_{i-1}}) \xi_{n, i}\|^2] \le D \|\scrD \psi\|_\infty^2 \bE[\|J_{s-} - J_{t^n_{i-1}}\|^2] = (s - t^n_{i-1}) D \|\scrD \psi\|_\infty^2  \int_{\RO}\|e\|^2 \nu(\od e).
\end{align*}
Since $\pd^2_{d+kD, d'+lD} g$ is continuous and is equal to 0 in a neighborhood of $0$ by $(\rmG\ref{item:G-brownian-part})$, we get
\begin{align*}
	G^n_{\eqref{eq:convergence-L1-GW-estimate}}(s) \xrightarrow{\bP} 0 \quad \mbox{as } n \to \infty,
\end{align*}
where the convergence in probability can be asserted by showing that any subsequence has a further subsequence converging a.s. to $0$. Moreover, since $g$ has bounded second-order partial derivatives by $(\rmG\ref{item:G-hessian})$ and $\{\|\eta^H_{n, i}\|^2\}_{1\le i \le n, n\ge 1}$ is uniformly integrable by \cref{assumption-eta}, it implies that $\{G^n_{\eqref{eq:convergence-L1-GW-estimate}}(s)\}_{n \ge 1}$ is also uniformly integrable. Hence, the dominated convergence theorem is applicable to obtain the first assertion in \eqref{eq:convergence-L1-GW-2-conditions}. The integrability condition in \eqref{eq:convergence-L1-GW-2-conditions} is easily verified by noting that
\begin{align*}
	\sup_{n \ge 1} \bE[ G^n_{\eqref{eq:convergence-L1-GW-estimate}}(s)] & \le \|\scrD^2 g\|_\infty \sup_{1\le i \le n, n\ge 1} \bE[|\eta^{H, (k)}_{n, i} \eta^{H, (l)}_{n, i}|] \\
	& \le \frac{1}{2}\|\scrD^2 g\|_\infty \sup_{1\le i \le n, n\ge 1} \bE[|\eta^{H, (k)}_{n, i}|^2 + |\eta^{H, (l)}_{n, i}|^2] = \|\scrD^2 g\|_\infty.
\end{align*} Hence, \eqref{eq:convergence-L1-GW} is proved.

\smallskip

\noindent\textbf{\textit{Step 5.}} We prove $G^n_{\eqref{eq:convergence-L1-GJ-part-1}}\to 0$. By the independence and Fubini's theorem we obtain
\begin{align}\label{eq:convergence-L1-GJ-estimate-1}
	 G^n_{\eqref{eq:convergence-L1-GJ-part-1}} & \le \int_{\RO} \int_0^{T} \bE\bb[  \sum_{i=1}^{n} \B| R^g_1( W_s -  W_{t^n_{i-1}}, J_{s-} - J_{t^n_{i-1}}; \eta^H_{n, i}, \xi_{n, i}; e) \notag \\
	& \hspace{130pt} - R^g_1(W_s - W_{t^n_{i-1}}, J_{s-} - J_{t^n_{i-1}}; 0, \xi_{n, i}; e)\B| \1_{(t^n_{i-1}, t^n_i]}(s) \bb] \od s \nu(\od e) \notag \\
	& =: \int_{\RO} \int_0^{T} \bE \b[ G^n_{\eqref{eq:convergence-L1-GJ-estimate-1}}(s; e)\b] \od s \nu(\od e).
\end{align}
By dominated convergence, it suffices to show that 
\begin{align}
	&\forall (s, e) \in (0, T) \times \RO: \lim_{n \to \infty} \bE[ G^n_{\eqref{eq:convergence-L1-GJ-estimate-1}}(s; e)] = 0, \label{eq:verify-DCT-GJ-1}\\
	&\mbox{and} \quad \int_{\RO}\int_0^{T} \sup_{n \ge 1} \bE[ G^n_{\eqref{eq:convergence-L1-GJ-estimate-1}}(s; e)] \od s \nu(\od e) <\infty. \label{eq:verify-DCT-GJ-2}
\end{align}
Indeed, for each $(s, e) \in (0, T) \times \RO$, using \eqref{eq:remainder-lipschitz} yields
\begin{align*}
	G^n_{\eqref{eq:convergence-L1-GJ-estimate-1}}(s; e) & \le \sum_{i=1}^{n} \B[4 c_{\eqref{eq:estimate-g-component-1}} \B(\|W_s - W_{t^n_{i-1}}\| + \|W_s - W_{t^n_{i-1}}\|^2 + \|\eta^H_{n, i} \pr (W_s - W_{t^n_{i-1}})\| \\
	& \quad + \|\eta^H_{n, i} \pr (W_s - W_{t^n_{i-1}})\|^2\B) + c_{\eqref{eq:remainder-lipschitz}} \|e\|(1+ \|\xi_{n, i}\|)\|\eta^H_{n, i} \pr (W_s - W_{t^n_{i-1}})\|\B] \1_{(t^n_{i-1}, t^n_i]}(s).
\end{align*}
Then, by H\"older's inequality we get
\begin{align*}
	\bE[G^n_{\eqref{eq:convergence-L1-GJ-estimate-1}}(s; e)] & \le  \sum_{i=1}^{n} \B[4 c_{\eqref{eq:estimate-g-component-1}} \B(\sqrt{D}\sqrt{s - t^n_{i-1}} + D(s - t^n_{i-1}) + D\sqrt{s - t^n_{i-1}} + D^2(s-t^n_{i-1}) \B) \\
	& \qquad + c_{\eqref{eq:remainder-lipschitz}}\|e\|\sqrt{\bE[|1+\|\xi_{n, i}\||^2]} \, \sqrt{\bE[\|\eta^H_{n, i} \pr (W_s - W_{t^n_{i-1}})\|^2]}\,\B] \1_{(t^n_{i-1}, t^n_i]}(s)\\
	& \to 0 \quad \mbox{as } n\to \infty,
\end{align*}
which then verifies \eqref{eq:verify-DCT-GJ-1}. To show \eqref{eq:verify-DCT-GJ-2}, we use the estimate \eqref{eq:estimate:remainder} to get
\begin{align*}
	\sup_{n \ge 1} \bE[G^n_{\eqref{eq:convergence-L1-GJ-estimate-1}}(s; e)] 	& \le 2 c_{\eqref{eq:estimate:remainder}}  \sup_{n\ge 1, 1\le i \le n} \bE[ (1+\|\xi_{n, i}\|^2)\|e\|^2] = 2 c_{\eqref{eq:estimate:remainder}}  (D+1)\|e\|^2.
\end{align*}
Since $\int_{\RO}\|e\|^2\nu(\od e) <\infty$ by assumption, \eqref{eq:verify-DCT-GJ-2} follows.

\smallskip

\noindent\textbf{\textit{Step 6.}} We show $G^{n}_{\eqref{eq:convergence-L1-GJ-part-2}} \to 0$. By the independence and Fubini's theorem one has
\begin{align}\label{eq:estimate-GJ-part-2}
	& G^{n}_{\eqref{eq:convergence-L1-GJ-part-2}} \notag\\
	&  \le \sum_{i=1}^{n} \int_{\RO} \int_{t^n_{i-1}}^{t^n_i} \bE \bb[ \bb|R^g_1(W_s - W_{t^n_{i-1}}, J_{s-} - J_{t^n_{i-1}}; 0, \xi_{n, i}; e) - \int_{\bR^D} g(0, e, \psi(e)u) \varphi_D(u) \od u\bb|\bb]\od s \nu(\od e) \notag \\
	& \le \int_{\bR^D} \int_{\RO} \int_0^{T} \bE\bb[\sum_{i=1}^{n} \bb|R^g_1(W_s - W_{t^n_{i-1}}, J_{s-} - J_{t^n_{i-1}}; 0, u; e) -  g(0, e, \psi(e)u) \bb| \1_{(t^n_{i-1}, t^n_i]}(s) \bb]  \notag \\
	& \hspace{360pt} \times \od s \nu(\od e) \varphi_D(u) \od u \notag \\
	& =: \int_{\bR^D} \int_{\RO} \int_0^{T} \bE\B[G^n_{\eqref{eq:estimate-GJ-part-2}}(s; e,u)\B] \od s \nu(\od e) \varphi_D(u) \od u.
\end{align}
For any $(s, e, u) \in (0, T] \times \RO \times \bR^D$, since the first two arguments in $R^g_1$ converge to $0$ a.s. as $n \to \infty$, we obtain that $G^n_{\eqref{eq:estimate-GJ-part-2}}(s;e,u) \to 0$ a.s. Moreover, one has
\begin{align*}
	 \bE\bb[\sup_{n \ge 1} |G^n_{\eqref{eq:estimate-GJ-part-2}}(s;e,u)|\bb] & \le \bE\bb[ \sup_{n \ge 1, 1\le i \le n}|R^g_1(W_s - W_{t^n_{i-1}}, J_{s-} - J_{t^n_{i-1}}; 0, u; e)|\bb] + |g(0, e, \psi(e)u)|\\
	 & \le c_{\eqref{eq:estimate:remainder}}  (1+\|u\|^2)\|e\|^2 + |g(0, e, \psi(e)u)|.
\end{align*}
Since, 
by \eqref{eq:estimate-g},
\begin{align*}
	\int_{\bR^D} \int_{\RO} \B((1+\|u\|^2)\|e\|^2 + |g(0, e, \psi(e)u)|\B) \nu(\od e) \varphi_D(u)\od u < \infty,
\end{align*}
the dominated convergence theorem implies that $G^{n}_{\eqref{eq:convergence-L1-GJ-part-2}} \to 0$ as $n \to \infty$.
\end{proof}

We first deal with the jump part of the  limit of $(\bmZ^n)_{n \ge 1}$. To do this, we recall from \cite[p.395]{JS03} the function space $C_2(\bR^\bfD)$, which consists of all  continuous bounded functions $g \colon \bR^\bfD \to \bR$ with $0 \notin \supp(g)$.

\begin{lemm}\label{limit-jump-part}
	The assertion \eqref{eq:lemm:convergence-jump-part} holds true for $g \in C_2(\bR^{\bfD})$. Consequently, for any $t \in [0, \infty)$ one has when $n \to \infty$ that 
	\begin{align*}
		\sum_{i=1}^{\sigma^n_t} \bE[g(\Delta_{n, i} \bmZ^n)|\cF_{n, i-1}] \xrightarrow{\bfL_1(\bP)} (t\wedge T) \int_{\bR^{2D}} g(0, e, u) \nu_L^\psi(\od e, \od u).
	\end{align*}
\end{lemm}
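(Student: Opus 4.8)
The plan is to approximate an arbitrary $g \in C_2(\bR^{\bfD})$ by functions in $C^2_*(\bR^{\bfD})$, to which \cref{lemm:convergence-jump-part} applies, and then to pass to the limit in the approximation. One may assume $\|g\|_\infty>0$. Since $0\notin\supp(g)$, fix $\rho>0$ with $g\equiv 0$ on $B_{\bfD}(2\rho)$, and with a standard family of mollifiers set $g_k:=g*\phi_{1/k}$, where $\phi_{1/k}$ is smooth, nonnegative, integrates to $1$, and is supported in $B_{\bfD}(1/k)$. For $k$ large, $g_k\in C^\infty(\bR^{\bfD})$ with $\|g_k\|_\infty\le\|g\|_\infty$, $g_k\equiv 0$ on $B_{\bfD}(\rho)$, all partial derivatives of $g_k$ globally bounded, and $g_k\to g$ uniformly on compacts; in particular $g_k(0)=0$, the second-order partials of $g_k$ vanish near $0$, and the first-order partials are bounded and vanish at $0$, so $g_k\in C^2_*(\bR^{\bfD})$. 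Hence, for each fixed $k$, \cref{lemm:convergence-jump-part} gives $\sum_{i=1}^n|\bE[g_k(\Delta_{n,i}\bmZ^n)|\cF_{n,i-1}] - \Delta t^n_i\int_{\bR^{2D}}g_k(0,e,u)\nu^\psi_L(\od e,\od u)|\to 0$ in $\bfL_1(\bP)$ as $n\to\infty$.

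Next I would extract a uniform-in-$n$ tail estimate for the increments $\Delta_{n,i}\bmZ^n$. Choosing a smooth $\chi\colon[0,\infty)\to[0,1]$ with $\chi\equiv 0$ on $[0,1/2]$ and $\chi\equiv 1$ on $[1,\infty)$, each function $z\mapsto\chi(\|z\|/c)$ ($c>0$) belongs to $C^2_*(\bR^{\bfD})$ and dominates $\1_{\{\|z\|\ge c\}}$; applying \cref{lemm:convergence-jump-part} to it and taking expectations shows $\sum_{i=1}^n\bP(\|\Delta_{n,i}\bmZ^n\|\ge c)\le\sum_{i=1}^n\bE[\chi(\|\Delta_{n,i}\bmZ^n\|/c)]\to T\int\chi(\|(e,u)\|/c)\nu^\psi_L(\od e,\od u)\le T\,\nu^\psi_L(\{\|(e,u)\|\ge c/2\})$. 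Since each partial sum is finite and $\nu^\psi_L(\{\|(e,u)\|\ge R\})\to 0$ as $R\to\infty$ by square integrability of $\nu^\psi_L$, this yields $\Lambda(c):=\sup_n\sum_{i=1}^n\bP(\|\Delta_{n,i}\bmZ^n\|\ge c)<\infty$ for every $c>0$, and $\lim_{R\to\infty}\sup_n\sum_{i=1}^n\bP(\|\Delta_{n,i}\bmZ^n\|\ge R)=0$.

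The core step is a three-parameter estimate. Set $b:=\int_{\bR^{2D}}g(0,e,u)\nu^\psi_L(\od e,\od u)$ and $b^k$ the analogous quantity for $g_k$. For $R>\rho$ put $\delta_k^{(R)}:=\sup_{\|z\|\le R}|g(z)-g_k(z)|$, which tends to $0$ as $k\to\infty$; since $g-g_k$ vanishes on $B_{\bfD}(\rho)$ and $|g-g_k|\le 2\|g\|_\infty$, one has $|g-g_k|(z)\le\delta_k^{(R)}\1_{\{\|z\|\ge\rho\}}+2\|g\|_\infty\1_{\{\|z\|\ge R\}}$. Then
\begin{align*}
&\bE\sum_{i=1}^n\bb|\bE[g(\Delta_{n,i}\bmZ^n)|\cF_{n,i-1}]-\Delta t^n_i\, b\bb| \\
&\quad\le\sum_{i=1}^n\bE\b[|g-g_k|(\Delta_{n,i}\bmZ^n)\b]+\bE\sum_{i=1}^n\bb|\bE[g_k(\Delta_{n,i}\bmZ^n)|\cF_{n,i-1}]-\Delta t^n_i\, b^k\bb|+T|b^k-b|,
\end{align*}
where the first summand is at most $\delta_k^{(R)}\Lambda(\rho)+2\|g\|_\infty\sup_n\sum_i\bP(\|\Delta_{n,i}\bmZ^n\|\ge R)$ and $T|b^k-b|\le\delta_k^{(R)}T\nu^\psi_L(\{\|\cdot\|\ge\rho\})+2\|g\|_\infty T\nu^\psi_L(\{\|\cdot\|\ge R\})$. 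Given $\varepsilon>0$, I would first fix $R$ so large that the two ``$R$-tail'' contributions are $<\varepsilon/3$, then fix $k$ so large that the two ``$\delta_k^{(R)}$'' contributions are $<\varepsilon/3$ (using $\Lambda(\rho)<\infty$ and $\nu^\psi_L(\{\|\cdot\|\ge\rho\})<\infty$), and finally invoke the $k$-fixed convergence from the first paragraph to bound the middle term by $\varepsilon/3$ for all large $n$. This proves \eqref{eq:lemm:convergence-jump-part} for $g\in C_2(\bR^{\bfD})$. The ``consequently'' assertion then follows by writing $\sum_{i=1}^{\sigma^n_t}\bE[g(\Delta_{n,i}\bmZ^n)|\cF_{n,i-1}]=\sum_{i=1}^{\sigma^n_t}\big(\bE[g(\Delta_{n,i}\bmZ^n)|\cF_{n,i-1}]-\Delta t^n_i\, b\big)+b\,t^n_{\sigma^n_t}$, bounding the first sum in $\bfL_1(\bP)$ by the full sum over $i=1,\dots,n$ (which tends to $0$), and using $t^n_{\sigma^n_t}\to t\wedge T$.

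The step requiring the most care is decoupling the approximation error from $n$: a generic $g\in C_2(\bR^{\bfD})$ need not be uniformly continuous, so mollification only delivers uniform convergence on compacts, and the passage $k\to\infty$ must therefore be separated from $n\to\infty$ by first absorbing the mass of $\Delta_{n,i}\bmZ^n$ outside a large ball. This is exactly the role of the uniform tail bound obtained by feeding smooth truncations into \cref{lemm:convergence-jump-part}; everything else is bookkeeping with dominated convergence and the square integrability of $\nu^\psi_L$.
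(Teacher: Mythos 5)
Your proposal is correct and follows essentially the same route as the paper: approximate $g$ by functions in $C^2_*(\bR^{\bfD})$ via mollification so that \cref{lemm:convergence-jump-part} applies, and absorb the approximation error using a uniform-in-$n$ bound on the mass of $\Delta_{n,i}\bmZ^n$ outside balls, finishing with a three-term triangle inequality and the reduction from $\sum_{i\le\sigma^n_t}$ to $\sum_{i\le n}$. The only cosmetic differences are that the paper first truncates $g$ to a compactly supported $g_K$ and then mollifies (so the error split is $g-g_K$ plus a uniformly small $g_K-\hat g_{\ep,K}$), and obtains the tail estimate directly from the conditional second moments of $\Delta_{n,i}\bmZ^n$ via Chebyshev, whereas you mollify $g$ itself and derive the tail bound by feeding smooth cutoffs back into \cref{lemm:convergence-jump-part}.
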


\begin{proof} It suffices to show the convergence for $t \in [0, T]$.
	Let $g \in C_2(\bR^{\bfD})$ and assume that $\supp(g) \cap B_{\bfD}(r_g) = \emptyset$ for some $r_g >0$. Let $\ep>0$ be arbitrarily small and $K >r_g$ a sufficiently large constant which is specified later. Since $g$ is continuous and bounded, there is a continuous function $g_{K}$ with compact support such that $\|g_{K}\|_\infty \le \|g\|_\infty$ and $g_{K} = g$ on $B_{\bfD}(K)$. Moreover, by convolution approximation, there is a function $\hat g_{\ep, K} \in C_2(\bR^{\bfD}) \cap C_c^2(\bR^{\bfD})$ such that $\supp(g_{K} - \hat g_{\ep, K}) \cap B_{\bfD}(r_g/2) = \emptyset$ and $\|g_{K} - \hat g_{\ep, K}\|_\infty \le \ep$. For $t \in (0, T]$, we denote
	\begin{align}\label{eq:jump-part-difference}
		I^g_{\eqref{eq:jump-part-difference}} : = \sum_{i=1}^{\sigma_t^n} \bb| \bE[g(\Delta_{n, i} \bmZ^n)|\cF_{n, i-1}] - (t^n_i - t^n_{i-1}) \int_{\bR^{2D}} g(0, e, u) \nu^\psi_{L} (\od e, \od u)\bb|
	\end{align}
	and then get by the triangle inequality that
	\begin{align*}
		I^g_{\eqref{eq:jump-part-difference}} \le I^{g - g_{K}}_{\eqref{eq:jump-part-difference}} + I^{g_{K} - \hat g_{\ep, K}}_{\eqref{eq:jump-part-difference}} + I^{\hat g_{\ep, K}}_{\eqref{eq:jump-part-difference}}.
	\end{align*}
	Since $\hat g_{\ep, K} \in C_2(\bR^{\bfD}) \cap C_c^2(\bR^{\bfD}) \subset C^2_*(\bR^{\bfD})$, according to  \cref{lemm:convergence-jump-part} one has
	\begin{align*}
		I^{\hat g_{\ep, K}}_{\eqref{eq:jump-part-difference}} \xrightarrow{\bfL_1(\bP)} 0.
	\end{align*}
	For the stochastic term in $I^{g - g_{K}}_{\eqref{eq:jump-part-difference}}$, we have, a.s.,
	\begin{align*}
		&\sum_{i=1}^{\sigma_t^n} \bE[|(g - g_{K})(\Delta_{n, i} \bmZ^n)|\,|\cF_{n, i-1}]  \le \|g - g_{K}\|_\infty \sum_{i=1}^n \bE[\1_{\{\|\Delta_{n, i} \bmZ^n\| \ge K\}}|\cF_{n, i-1}]\\
		&  \le \frac{2\|g\|_\infty}{K^2} \sum_{i=1}^n \bE[\|\Delta_{n, i} \bmZ^n\|^2 |\cF_{n, i-1}] \\
		& =  \frac{2\|g\|_\infty}{K^2} \sum_{i=1}^n \bE \B[\|\Delta_{n, i} W\|^2 + \|\eta^H_{n, i} \pr \Delta_{n, i} W\|^2 + \|\Delta_{n, i} J\|^2 + \psi(\Delta_{n, i} J)^2 \|\xi_{n, i}\|^2 \,\B|\cF_{n, i-1}\B] \\
		& \le \frac{2\|g\|_\infty}{K^2} \sum_{i=1}^n \bb[(t^n_i - t^n_{i-1})(D+ D^2) + (1+ D\|\scrD\psi\|_\infty^2 )(t^n_i - t^n_{i-1}) \int_{\RO}\|e\|^2 \nu(\od e)\bb]\\
		& = \frac{2T\|g\|_\infty}{K^2}\bb[D + D^2 + (1+ D\|\scrD\psi\|_\infty^2) \int_{\RO}\|e\|^2 \nu(\od e)\bb].
	\end{align*}
	For the stochastic term in $I^{g_{K} - \hat g_{\ep, K}}_{\eqref{eq:jump-part-difference}}$, we use the same arguments as for $I^{g - g_{K}}_{\eqref{eq:jump-part-difference}}$  to obtain, a.s., 
	\begin{align*}
		\sum_{i=1}^{\sigma_t^n} \bE[|(g_{K} - \hat g_{\ep, K})(\Delta_{n, i} \bmZ^n)|\,|\cF_{n, i-1}]  & \le \|g_{K} - \hat g_{\ep, K}\|_\infty \sum_{i=1}^n \bE[\1_{\{\|\Delta_{n, i} \bmZ^n\| \ge r_g/2\}}|\cF_{n, i-1}] \\
		& \le \frac{4T \ep}{r_g^2} \bb[D + D^2 + (1+ D\|\scrD\psi\|_\infty^2) \int_{\RO}\|e\|^2 \nu(\od e)\bb].
	\end{align*}
	Then, by the triangle inequality,
	\begin{align*}
		I^{g - g_{K}}_{\eqref{eq:jump-part-difference}} \le \frac{2T\|g\|_\infty}{K^2}\bb[D + D^2 + (1+ D\|\scrD\psi\|_\infty^2) \int_{\RO}\|e\|^2 \nu(\od e)\bb] + 2T\|g\|_\infty \int_{B^c_{2D}(K)} \nu_{L}^\psi(\od e, \od u)
	\end{align*}
	which can be made arbitrarily small as long as we choose a sufficiently large $K>0$. Analogously,
	\begin{align*}
		I^{g_{K} - \hat g_{\ep, K}}_{\eqref{eq:jump-part-difference}} \le	\ep\bb[\frac{4T}{r_g^2} \bb(D + D^2 + (1+ D\|\scrD\psi\|_\infty^2) \int_{\RO}\|e\|^2 \nu(\od e)\bb) + T \int_{B^c_{2D}(r_g/2) } \nu^\psi_{L}(\od e, \od u)\bb].
	\end{align*}
	Eventually, since $\ep>0$ is arbitrarily small, it implies that $I^g_{\eqref{eq:jump-part-difference}} \xrightarrow{\bfL_1(\bP)} 0$.
\end{proof}

We continue to investigate the continuous and the drift components of the limit of $(\bmZ^n)_{n \ge 1}$. To this end, let us fix a truncation function $h \colon \bR^{\bfD} \to \bR^{\bfD}$ in the sense of \cite[Definition II.2.3]{JS03}, i.e. $h$ is bounded and $h(z) = z$ in a neighborhood of 0. As we will see later that the limit of $(\bmZ^n)_{n \ge 1}$ does not depend on the particular form of truncation function, we assume that $h = (h^{(d)})_{d=1}^{\bfD}$ with $h^{(d)} \in C^2_b(\bR^{\bfD})$.

\begin{lemm}\label{lemm:convergence-drift-part}
	For any $t \in [0, \infty)$, one has when $n \to \infty$ that
	\begin{align*}
		\sup_{s \le t} \bb\| \sum_{i=1}^{\sigma^n_s} \bE[h(\Delta_{n, i} \bmZ^n)|\cF_{n,i-1}] - B_s\bb\| \xrightarrow{\bfL_1(\bP)} 0,
	\end{align*}
where $B: = B(h)$ given by
\begin{align*}
	B_t: =  (t \wedge T) \int_{\bR^{2D}}(h(0, e, u) - (0, e, u)^\tran) \nu^\psi_{L}(\od e, \od u).
\end{align*}
\end{lemm}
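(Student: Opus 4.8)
The plan is to reduce the statement to \cref{limit-jump-part} by a decomposition of the truncation function $h$ adapted to the Brownian/jump structure of $\bmZ^n$. Fix $a>0$ with $h(z)=z$ on $B_{\bfD}(a)$ (possible since $h$ is a truncation function) and a continuous cutoff $\chi$ with $0\le\chi\le1$, $\chi\equiv1$ on $B_{\bfD}(a/2)$ and $\chi\equiv0$ on $B_{\bfD}^c(a)$; put $g_0(z):=(1-\chi(z))z$ and $g_1(z):=(1-\chi(z))h(z)$, both $\bR^{\bfD}$-valued. Since $\chi(z)\ne0$ forces $\|z\|<a$, hence $h(z)=z$ there, one has the pointwise identity $h=z\chi+h(1-\chi)=z-g_0+g_1$, so that $\sum_{i=1}^{\sigma^n_s}\bE[h(\Delta_{n,i}\bmZ^n)|\cF_{n,i-1}]$ equals $\sum_{i=1}^{\sigma^n_s}\bE[\Delta_{n,i}\bmZ^n|\cF_{n,i-1}]-\sum_{i=1}^{\sigma^n_s}\bE[g_0(\Delta_{n,i}\bmZ^n)|\cF_{n,i-1}]+\sum_{i=1}^{\sigma^n_s}\bE[g_1(\Delta_{n,i}\bmZ^n)|\cF_{n,i-1}]$, all conditional expectations being well defined because $\Delta_{n,i}\bmZ^n\in\bfL_2(\bP)$ with $\sum_i\bE[\|\Delta_{n,i}\bmZ^n\|^2|\cF_{n,i-1}]\le CT$, $C:=D+D^2+(1+D\|\scrD\psi\|_\infty^2)\int_{\RO}\|e\|^2\nu(\od e)$, as already computed in the proof of \cref{limit-jump-part}. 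The first sum vanishes identically: $\bE[\Delta_{n,i}W|\cF_{n,i-1}]=\bE[\Delta_{n,i}J|\cF_{n,i-1}]=0$ since $W$ and $J$ are martingales, $\bE[\eta^H_{n,i}\pr\Delta_{n,i}W|\cF_{n,i-1}]=0$ by conditioning first on $\cF_{n,i-1}\vee\sigma(\xi_{n,i})$ (with respect to which $\eta^H_{n,i}$ is measurable and $\Delta_{n,i}W$ is centred and independent), and $\bE[\psi(\Delta_{n,i}J)\xi_{n,i}|\cF_{n,i-1}]=0$ since $\xi_{n,i}$ is centred and independent of $\cF_{n,i-1}\vee\sigma(\Delta_{n,i}J)$. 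This is the step responsible for the absence of a continuous/Brownian contribution to $B$.

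For the third sum note that each component of $g_1=(1-\chi)h$ is bounded, continuous and vanishes on $B_{\bfD}(a/2)$, hence lies in $C_2(\bR^{\bfD})$, and \cref{limit-jump-part} applies componentwise. The function $g_0$ is continuous and vanishes near $0$ but is unbounded, so I would split it once more using continuous cutoffs $\zeta_R$ with $0\le\zeta_R\le1$, $\zeta_R\equiv1$ on $B_{\bfD}(R)$ and $\zeta_R\equiv0$ on $B_{\bfD}^c(2R)$: the truncated piece $g_0\zeta_R$ again belongs componentwise to $C_2(\bR^{\bfD})$ and is handled by \cref{limit-jump-part}, while the far tail is estimated crudely, $\|g_0(z)\|(1-\zeta_R(z))\le\|z\|\1_{\{\|z\|\ge R\}}$, so that $\sum_{i=1}^{\sigma^n_s}\bE[\|g_0(\Delta_{n,i}\bmZ^n)\|(1-\zeta_R(\Delta_{n,i}\bmZ^n))\,|\,\cF_{n,i-1}]\le R^{-1}\sum_{i=1}^{n}\bE[\|\Delta_{n,i}\bmZ^n\|^2|\cF_{n,i-1}]\le CT/R$, uniformly in $n$ and in $s\le t$. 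For the uniformity in $s$ demanded by the statement I would not use the displayed consequence of \cref{limit-jump-part} but the sharper bound established in the proof of \cref{lemm:convergence-jump-part} and extended to $g\in C_2(\bR^{\bfD})$ in \cref{limit-jump-part}, namely $\sum_{i=1}^{n}\big|\bE[g(\Delta_{n,i}\bmZ^n)|\cF_{n,i-1}]-(t^n_i-t^n_{i-1})\int g(0,e,u)\nu^\psi_L(\od e,\od u)\big|\xrightarrow{\bfL_1}0$; combined with $|t^n_{\sigma^n_s}-(s\wedge T)|\le|\Pi_n|$ this dominates $\sup_{s\le t}\big\|\sum_{i=1}^{\sigma^n_s}\bE[g(\Delta_{n,i}\bmZ^n)|\cF_{n,i-1}]-(s\wedge T)\int g(0,e,u)\nu^\psi_L(\od e,\od u)\big\|$ in $\bfL_1(\bP)$.

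Assembling the three sums, for every fixed $R$ one obtains $\limsup_{n\to\infty}\bE\big[\sup_{s\le t}\big\|\sum_{i=1}^{\sigma^n_s}\bE[h(\Delta_{n,i}\bmZ^n)|\cF_{n,i-1}]-(s\wedge T)\,b_R\big\|\big]\le CT/R$, where $b_R:=\int_{\bR^{2D}}\big(g_1(0,e,u)-\zeta_R(0,e,u)g_0(0,e,u)\big)\nu^\psi_L(\od e,\od u)\in\bR^{\bfD}$. Since $\nu^\psi_L$ is a square integrable Lévy measure and the integrands are supported on $\{\|(e,u)\|\ge a/2\}$, on which $\|(e,u)\|\le(2/a)\|(e,u)\|^2$, dominated convergence gives $b_R\to\int_{\bR^{2D}}\big(g_1(0,e,u)-g_0(0,e,u)\big)\nu^\psi_L(\od e,\od u)$ as $R\to\infty$. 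Finally, because $g_1-g_0=(1-\chi)(h-\mathrm{id})$ while $\chi(0,e,u)\ne0$ forces $\|(e,u)\|<a$, a region on which $h(0,e,u)=(0,e,u)^\tran$, the factor $1-\chi$ is immaterial, so this limit equals $\int_{\bR^{2D}}\big(h(0,e,u)-(0,e,u)^\tran\big)\nu^\psi_L(\od e,\od u)$ and $(s\wedge T)\,b_R\to B_s$ uniformly in $s\le t$; letting $R\to\infty$ in the $\limsup$ inequality completes the proof.

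The one genuinely delicate point I anticipate is the unbounded linear piece $g_0(z)=(1-\chi(z))z$: it lies outside $C_2(\bR^{\bfD})$, so it has to be truncated by $\zeta_R$ with the far tail absorbed into the uniform second-moment bound on the increments, after which one must verify that the resulting deterministic integrals $b_R$ converge as $R\to\infty$ and reassemble into the claimed drift $B$. The vanishing of $\sum_i\bE[\Delta_{n,i}\bmZ^n|\cF_{n,i-1}]$, the two applications of \cref{limit-jump-part}, and the passage from fixed-$s$ to uniform-in-$s$ convergence are then routine.
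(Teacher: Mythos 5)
Your proof is correct, but it takes a genuinely different route from the paper's. The paper exploits the standing assumption $h^{(d)}\in C^2_b(\bR^{\bfD})$ made just before the lemma: it sets $\tilde h^{(d)}(z):=h^{(d)}(z)-z^{(d)}$, uses $\bE[\Delta_{n,i}\bmZ^n|\cF_{n,i-1}]=0$ to replace $h^{(d)}$ by $\tilde h^{(d)}$ inside the conditional expectations, checks that $\tilde h^{(d)}\in C^2_*(\bR^{\bfD})$, and then applies \cref{lemm:convergence-jump-part} (the $C^2_*$ version, which was designed to accommodate functions of quadratic growth) in a single stroke; the reduction from $\sup_{s\le t}$ to a sum over all $i$ plus a mesh term is the same device as in your final display. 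You instead rely only on the $C_2$ statement of \cref{limit-jump-part}, which forces the splitting $h=\mathrm{id}-g_0+g_1$ with cutoffs, the control of the unbounded tail of $g_0$ via the uniform conditional second-moment bound $\sum_i\bE[\|\Delta_{n,i}\bmZ^n\|^2|\cF_{n,i-1}]\le CT$, and a final $R\to\infty$ passage in the deterministic integrals $b_R$. Your argument is longer, but it only uses that $h$ is bounded, continuous and equal to the identity near $0$, so it dispenses with the $C^2_b$ regularity that the paper imposes on the truncation function purely for convenience; the paper's argument is shorter precisely because the subtracted identity is absorbed into the class $C^2_*$ once and for all.
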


\begin{proof}
It is sufficient to consider $t \in [0, T]$ and prove that for any $d = 1, \ldots, \bfD$ one has
\begin{align}\label{drift-term-convergence}
I^{(d)}_{\eqref{drift-term-convergence}}:=	\sup_{s \le t} \bb| \sum_{i=1}^{\sigma^n_s} \bE[h^{(d)}(\Delta_{n, i} \bmZ^n)|\cF_{n, i-1}] - s B_1^{(d)}\bb| \xrightarrow{\bfL_1(\bP)} 0,
\end{align}
Let $\tilde h^{(d)}(z) : = h^{(d)}(z)  - z^{(d)}$ for $z = (z^{(1)}, \ldots, z^{(\bfD)})\in \bR^{\bfD}$. It follows from  $\bE[\Delta_{n, i} \bmZ^n|\cF_{n, i-1}] = 0$ a.s. that
\begin{align}\label{eq:equality-h-tilde}
	\bE[h^{(d)}(\Delta_{n, i} \bmZ^n)|\cF_{n, i-1}] = \bE[\tilde h^{(d)}(\Delta_{n, i} \bmZ^n)|\cF_{n, i-1}]\quad \mbox{a.s.}
\end{align}
Hence we now prove \eqref{drift-term-convergence} for $\tilde h^{(d)}$ in place of $h^{(d)}$. We remark that there is no problem regarding $\bP$-null sets for that replacement as only countably many random variables are considered in \eqref{drift-term-convergence}. On the other hand, since $h^{(d)} \in C^2_b(\bR^{\bfD})$ and $h^{(d)}(z) = z^{(d)}$ in a neighborhood of $0$, it is straightforward to check that $\tilde h^{(d)} \in C^2_*(\bR^{\bfD})$. By the triangle inequality, a.s.,
\begin{align*}
	I^{(d)}_{\eqref{drift-term-convergence}} & \le \sup_{s \le t} \bb| \sum_{i=1}^{\sigma^n_s} \bE[\tilde h^{(d)}(\Delta_{n, i} \bmZ^n)|\cF_{n, i-1}] - t^n_{\sigma^n_s} B_{1}^{(d)}\bb| + \sup_{s \le t} \B| t^n_{\sigma^n_s} B_{1}^{(d)} - s B^{(d)}_1 \B|\\
	& \le \sum_{i=1}^{n} \B| \bE[\tilde h^{(d)}(\Delta_{n, i} \bmZ^n) |\cF_{n, i-1}] - (t^n_i - t^n_{i-1})B_1^{(d)} \B|  + \max_{1 \le i \le n} (t^n_i - t^n_{i-1})|B^{(d)}_1|.
\end{align*}
According to \cref{lemm:convergence-jump-part}, the first term on the right-hand side converges to $0$ in $\bfL_1(\bP)$. The second term $\max_{1 \le i \le n} (t^n_i - t^n_{i-1})|B^{(d)}_1|$ obviously tends to $0$ as $n \to \infty$. Hence, \eqref{drift-term-convergence} follows.
\end{proof}

We now investigate the continuous part of the limit of $(\bmZ^n)_{n \ge 1}$. For $t \in [0, \infty)$, we define the matrices	$\bmC_t = (C_t^{(k, l)}) \in \bR^{\bfD} \times \bR^{\bfD}$ and its modification $\wt \bmC_t = (\wt C_t^{(k, l)}) \in \bR^{\bfD} \times \bR^{\bfD}$ by
\begin{align}\label{eq:definition-coeff-matrix}
	C_t^{(k,l)} := \begin{cases}
	t \wedge T & \mbox{if } 1\le k = l \le D^2 +D\\
	0 & \mbox{otherwise,}
\end{cases}
\end{align}
and
\begin{align*}
	\wt C_t^{(k,l)}: = C_t^{(k,l)} + (t \wedge T) \int_{\bR^{2D}} (h^{(k)} h^{(l)})(0, e, u) \nu^\psi_{L}(\od e, \od u).
\end{align*}

\begin{lemm}\label{lemm:convergence-continuous-part} For any $t \in [0, \infty)$ and $1 \le k, l \le \bfD$, one has when $n \to \infty$ that
	\begin{align}
		I_{\eqref{mixture-term-convergence-1}}&:= \sum_{i=1}^{\sigma^n_t} \bE[h^{(k)}(\Delta_{n, i} \bmZ^n)|\cF_{n, i-1}]\, \bE[h^{(l)}(\Delta_{n, i} \bmZ^n)|\cF_{n,i-1}] \xrightarrow{\bfL_1(\bP)} 0, \label{mixture-term-convergence-1}\\
		I_{\eqref{mixture-term-convergence-2}}&:=\sum_{i=1}^{\sigma^n_t} \bE[(h^{(k)} h^{(l)})(\Delta_{n, i} \bmZ^n)|\cF_{n, i-1}] \xrightarrow{\bfL_1(\bP)} \wt C_{t}^{(k,l)}. \label{mixture-term-convergence-2}
 	\end{align}
\end{lemm}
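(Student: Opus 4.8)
The plan is to prove \eqref{mixture-term-convergence-1} by an elementary deterministic moment bound, and \eqref{mixture-term-convergence-2} by splitting $h^{(k)}h^{(l)}$ into a part that is polynomial near the origin and a bounded part supported away from the origin, so that \cref{lemm:convergence-jump-part} and \cref{limit-jump-part} become applicable. For \eqref{mixture-term-convergence-1}, I would first record, exactly as in the proof of \cref{limit-jump-part}, the a.s.\ non-random estimate $\bE[\|\Delta_{n,i}\bmZ^n\|^2\,|\,\cF_{n,i-1}] \le \kappa\,(t^n_i - t^n_{i-1})$, with $\kappa := D + D^2 + (1+D\|\scrD\psi\|_\infty^2)\int_{\RO}\|e\|^2\nu(\od e)$, which rests on the independence of $(W,J)$ and $\Xi$, on the identity $\bE[\|\eta^H_{n,i}\|^2\,|\,\cF_{n,i-1}] = \Tr(I_D) = D$ from \eqref{property-eta}, and on $\psi(0) = 0$, $\|\scrD\psi\|_\infty < \infty$. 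Since $\tilde h^{(d)}(z) := h^{(d)}(z) - z^{(d)}$ vanishes on a ball $B_\bfD(r_d)$ and is at most of linear growth (because $h^{(d)}$ is bounded), one has $|\tilde h^{(d)}(z)| \le c'_d\,\|z\|^2$ for all $z$ and a suitable $c'_d$; combined with $\bE[\Delta_{n,i}\bmZ^n\,|\,\cF_{n,i-1}] = 0$ this gives $|\bE[h^{(d)}(\Delta_{n,i}\bmZ^n)\,|\,\cF_{n,i-1}]| = |\bE[\tilde h^{(d)}(\Delta_{n,i}\bmZ^n)\,|\,\cF_{n,i-1}]| \le c_d\,(t^n_i - t^n_{i-1})$ a.s.\ for a non-random $c_d>0$. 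Hence $|I_{\eqref{mixture-term-convergence-1}}| \le \big(\max_{1\le i\le\sigma^n_t}|\bE[h^{(k)}(\Delta_{n,i}\bmZ^n)|\cF_{n,i-1}]|\big)\sum_{i=1}^{\sigma^n_t}|\bE[h^{(l)}(\Delta_{n,i}\bmZ^n)|\cF_{n,i-1}]| \le c_kc_l\,|\Pi_n|\,t \to 0$, which proves \eqref{mixture-term-convergence-1}, even uniformly in $\omega$.

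For \eqref{mixture-term-convergence-2}, fix a cutoff $\chi\in C^2_c(\bR^{\bfD})$ equal to $1$ on a ball $B_\bfD(\rho)$ small enough that $h^{(d)}(z)=z^{(d)}$ there, and write $h^{(k)}h^{(l)} = q_{kl} + g_{kl}$ with $q_{kl}(z):= z^{(k)}z^{(l)}\chi(z)$ and $g_{kl}:= h^{(k)}h^{(l)} - q_{kl}$. By construction $g_{kl}$ is continuous, bounded, and vanishes on $B_\bfD(\rho)$, so $g_{kl}\in C_2(\bR^{\bfD})$ and \cref{limit-jump-part} yields $\sum_{i=1}^{\sigma^n_t}\bE[g_{kl}(\Delta_{n,i}\bmZ^n)|\cF_{n,i-1}] \xrightarrow{\bfL_1(\bP)} (t\wedge T)\int_{\bR^{2D}}g_{kl}(0,e,u)\nu^\psi_L(\od e,\od u)$. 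It then remains to treat $\sum_{i=1}^{\sigma^n_t}\bE[q_{kl}(\Delta_{n,i}\bmZ^n)|\cF_{n,i-1}]$, and here I would distinguish two regimes. If not both of $k,l$ lie in $\{1,\dots,D^2+D\}$, a routine verification of $(\rmG\ref{item:G-hessian})$--$(\rmG\ref{item:G-jump-part})$ shows $q_{kl}\in C^2_*(\bR^{\bfD})$ (the only point needing care is $(\rmG\ref{item:G-brownian-part})$, which holds because near the origin $\pd^2_{d,d'}q_{kl} = \delta_{dk}\delta_{d'l} + \delta_{dl}\delta_{d'k}$, and $\{d,d'\} = \{k,l\}$ is impossible when $d\vee d'\le D^2+D$). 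Applying \cref{lemm:convergence-jump-part} to $q_{kl}$ and adding the two pieces then produces the limit $(t\wedge T)\int_{\bR^{2D}}(h^{(k)}h^{(l)})(0,e,u)\nu^\psi_L(\od e,\od u)$, which equals $\wt C_t^{(k,l)}$ because $C_t^{(k,l)}=0$ in this regime by \eqref{eq:definition-coeff-matrix}.

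In the complementary regime $k,l\in\{1,\dots,D^2+D\}$, I would instead decompose $h^{(k)}h^{(l)}(z) = z^{(k)}z^{(l)} + r_{kl}(z)$ with $r_{kl}:= h^{(k)}h^{(l)} - z^{(k)}z^{(l)}$, which is $C^2$ with bounded Hessian, vanishes near $0$, and satisfies $r_{kl}(0_{D^2+D},\cdot)\equiv 0$ and $\pd_d r_{kl}(0_{D^2+D},\cdot)\equiv 0$ for every $d\le D^2+D$ (since $z^{(k)},z^{(l)}$ vanish on $\{0_{D^2+D}\}\times\bR^{2D}$ while $h$ is bounded), so $r_{kl}\in C^2_*(\bR^{\bfD})$. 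The polynomial term is computed directly: the independence of $W$ from $(\Xi,J)$ together with $\bE[\eta^H_{n,i}|\cF_{n,i-1}] = 0$ and $\bE[\eta^H_{n,i}(\eta^H_{n,i})^\tran|\cF_{n,i-1}] = I_D$ from \eqref{property-eta} yield $\bE[[\Delta_{n,i}\bmZ^n]^{(k)}[\Delta_{n,i}\bmZ^n]^{(l)}|\cF_{n,i-1}] = \delta_{kl}(t^n_i - t^n_{i-1})$ a.s., so $\sum_{i=1}^{\sigma^n_t}\bE[[\Delta_{n,i}\bmZ^n]^{(k)}[\Delta_{n,i}\bmZ^n]^{(l)}|\cF_{n,i-1}] = \delta_{kl}\,t^n_{\sigma^n_t} \to \delta_{kl}(t\wedge T) = C_t^{(k,l)}$; combining this with \cref{lemm:convergence-jump-part} for $r_{kl}$ and with $r_{kl}(0,e,u)=(h^{(k)}h^{(l)})(0,e,u)$ (valid for the same reason) once again gives $\wt C_t^{(k,l)}$. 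The main obstacle is organizational rather than conceptual: one must verify carefully the $C^2_*(\bR^{\bfD})$-membership of the auxiliary functions $q_{kl}$ and $r_{kl}$ in each index regime and check that the two partial limits add up to exactly the matrix $\wt C_t^{(k,l)}$; all the probabilistic content is already contained in \cref{lemm:convergence-jump-part}, \cref{limit-jump-part}, and the elementary moment identities in \eqref{property-eta}.
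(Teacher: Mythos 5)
Your proof is correct and, in places, takes a genuinely different route from the paper's. For \eqref{mixture-term-convergence-1} the paper reduces the problem to \cref{lemm:convergence-jump-part} by inserting the compensator $(t^n_i-t^n_{i-1})B_1^{(k)}$ and estimating the cross terms, whereas you exploit that $\tilde h^{(d)}(z)=h^{(d)}(z)-z^{(d)}$ vanishes near $0$ and is of at most linear growth, hence $|\tilde h^{(d)}(z)|\le c_d'\|z\|^2$, which together with $\bE[\Delta_{n,i}\bmZ^n\,|\,\cF_{n,i-1}]=0$ and the deterministic bound $\bE[\|\Delta_{n,i}\bmZ^n\|^2\,|\,\cF_{n,i-1}]\le \kappa\,\Delta t^n_i$ gives the pathwise estimate $|I_{\eqref{mixture-term-convergence-1}}|\le c\,|\Pi_n|\,t$. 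This is simpler than the paper's argument and yields a stronger (uniform in $\omega$) conclusion; it is a legitimate shortcut. For \eqref{mixture-term-convergence-2} your argument is structurally the same as the paper's: subtract the quadratic monomial $z^{(k)}z^{(l)}$ exactly when both indices lie in $\{1,\dots,D^2+D\}$, compute the monomial part directly from \eqref{property-eta} and the independence structure to obtain $C^{(k,l)}_t$, and feed the remainder into \cref{lemm:convergence-jump-part} after checking membership in $C^2_*(\bR^{\bfD})$. Your additional cutoff decomposition $h^{(k)}h^{(l)}=z^{(k)}z^{(l)}\chi+g_{kl}$ in the regime where at least one index exceeds $D^2+D$ is more elaborate than the paper's (which simply verifies $h^{(k)}h^{(l)}\in C^2_*(\bR^{\bfD})$ there and applies \cref{lemm:convergence-jump-part} once), but it is sound, since $g_{kl}\in C_2(\bR^{\bfD})$ and the two partial limits recombine to $\int (h^{(k)}h^{(l)})(0,e,u)\,\nu^\psi_L(\od e,\od u)$.

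One intermediate claim is false as stated: for $k,l\le D^2+D$ you assert $r_{kl}(0_{D^2+D},\cdot)\equiv 0$ and $\pd_d r_{kl}(0_{D^2+D},\cdot)\equiv 0$. Since the paper only assumes $h^{(d)}\in C^2_b(\bR^{\bfD})$ with $h(z)=z$ near $0$ (no product structure), $h^{(k)}(0_{D^2+D},j)$ need not vanish for large $j$, so $r_{kl}(0_{D^2+D},j)=(h^{(k)}h^{(l)})(0_{D^2+D},j)$ is generally nonzero. This does not damage the proof: condition $(\rmG\ref{item:G-control-brownian-part})$ only requires $\sup_j|\pd_d r_{kl}(0_{D^2+D},j)|<\infty$, which holds because $\pd_d q^{(k,l)}$ vanishes on $\{0_{D^2+D}\}\times\bR^{2D}$ and $h^{(k)}h^{(l)}\in C^2_b$; and the identity $r_{kl}(0,e,u)=(h^{(k)}h^{(l)})(0,e,u)$, which is what you actually use in the final bookkeeping, follows from $q^{(k,l)}(0,e,u)=0$ alone. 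With that correction the argument is complete.
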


\begin{proof} It suffices to prove for $t \in [0, T]$. We first show that $I_{\eqref{mixture-term-convergence-1}} \xrightarrow{\bfL_1(\bP)} 0$ as $n \to \infty$.
In the sequel we employ the notation as in the proof of \cref{lemm:convergence-drift-part}. According to \eqref{eq:equality-h-tilde} one has, a.s., 
\begin{align*}
	I_{\eqref{mixture-term-convergence-1}} & = \sum_{i=1}^{\sigma^n_t} \bE\B[\tilde h^{(k)}(\Delta_{n, i} \bmZ^n) - (t^n_i - t^n_{i-1}) B_1^{(k)}\B|\cF_{n, i-1}\B]  \bE[h^{(l)}(\Delta_{n, i} \bmZ^n)|\cF_{n,i-1}]\\
	& \quad  +  B_1^{(k)} \sum_{i=1}^{\sigma^n_t}  (t^n_i - t^n_{i-1}) \bE\B[\tilde h^{(l)}(\Delta_{n, i} \bmZ^n) - (t^n_i - t^n_{i-1}) B_1^{(l)}\B|\cF_{n,i-1}\B] + B_1^{(k)} B_1^{(l)} \sum_{i=1}^{\sigma^n_t}(t^n_i- t^n_{i-1})^2.
\end{align*}
Then, a.s.,
\begin{align*}
	|I_{\eqref{mixture-term-convergence-1}}| & \le \|h^{(l)}\|_\infty \sum_{i=1}^{n} \B|\bE\B[\tilde h^{(k)}(\Delta_{n, i} \bmZ^n) - (t^n_i - t^n_{i-1}) B_1^{(k)}\B|\cF_{n, i-1}\B]\B|\\
	& \quad + |B_1^{(k)}|\max_{1\le i \le n}(t^n_i - t^n_{i-1})  \sum_{i=1}^{n} \B| \bE\B[\tilde h^{(l)}(\Delta_{n, i} \bmZ^n) - (t^n_i - t^n_{i-1}) B_1^{(l)}\B|\cF_{n,i-1}\B]\B|\\
	& \quad + t |B_1^{(k)} B_1^{(l)}| \max_{1\le i \le n}(t^n_i - t^n_{i-1}).
\end{align*}
Since $\max_{1\le i \le n}(t^n_i - t^n_{i-1}) \to 0$, applying \cref{lemm:convergence-jump-part} yields \eqref{mixture-term-convergence-1}.

We next show that $I_{\eqref{mixture-term-convergence-2}} \to \wt C^{(k, l)}_t$ in $\bfL_1(\bP)$. For $z = (z^{(1)}, \ldots, z^{(\bfD)}) \in \bR^{\bfD}$, we define
$$q^{(k, l)}(z): = z^{(k)} z^{(l)} \quad \mbox{and} \quad  \hat h^{(k, l)}(z) : = \begin{cases}
	 (h^{(k)}h^{(l)})(z) - q^{(k, l)}(z) & \mbox{if } 1 \le k \vee l \le D^2 +D\\
	 (h^{(k)}h^{(l)})(z) & \mbox{otherwise.}
	 \end{cases}$$
We now verify that $\hat h^{(k, l)} \in C^2_*(\bR^{\bfD})$ for any $k, l = 1, \ldots, \bfD$:
\begin{itemize}
	\itemsep3pt
	\item $\hat h^{(k, l)}$ obviously satisfies $(\rmG\ref{item:G-hessian})$.
	
	\item Let $1 \le d\vee d' \le D^2 +D$. If $k \vee l \le D^2 +D$, then $\hat h^{(k, l)}$, and thus $\pd^2_{d, d'} \hat h^{(k, l)}$, are $0$ in a neighborhood of $0$. If $k \vee l \ge D^2 +D+1$, then $\pd^2_{d, d'} \hat h^{(k, l)} = \pd^2_{d, d'}(h^{(k)} h^{(l)} - q^{(k, l)})$, which also shows that $\pd^2_{d, d'} \hat h^{(k, l)}$ is $0$ around $0$. Hence, $(\rmG\ref{item:G-brownian-part})$ is satisfied.
	
	\item For $d = 1, \ldots, D^2+D$ and for any $j \in \bR^{2D}$, one has 
	$$\pd_d \hat h^{(k, l)}(0, j) = \begin{cases}
		\pd_d(h^{(k)}h^{(l)})(0, j) - \pd_d q^{(k, l)}(0, j) & \mbox{if } 1 \le k \vee l \le D^2 +D\\
		\pd_d(h^{(k)}h^{(l)})(0, j) & \mbox{otherwise}
	\end{cases} = \pd_d(h^{(k)}h^{(l)})(0, j).$$ Hence, $\max_{1 \le d \le D^2 +D} \|\pd_d \hat h^{(k, l)}(0_{D^2 +D}, \cdot)\|_\infty \le \|\scrD(h^{(k)}h^{(l)})\|_\infty <\infty$, which verifies $(\rmG\ref{item:G-control-brownian-part})$.
	
	\item For $d = D^2+D+1, \ldots, \bfD$, since $\pd_d q^{(k, l)} = 0$ if $k \vee l \le D^2 +D$ we infer that $\pd_d \hat h^{(k, l)} = \pd_d(h^{(k)}h^{(l)})$ and $\pd_d \hat h^{(k, l)}(0) = h^{(l)}(0) \pd_d h^{(k)}(0)  + h^{(k)}(0) \pd_d h^{(l)}(0) = 0$. Thus, $(\rmG\ref{item:G-jump-part})$ is satisfied.
\end{itemize}
Applying \cref{lemm:convergence-jump-part} and noticing that, for any $1\le k, l \le \bfD$,
$$\int_{\bR^{2D}} \hat h^{(k, l)}(0, e, u) \nu_L^{\psi}(\od e, \od u) = \int_{\bR^{2D}} (h^{(k)} h^{(l)})(0, e, u) \nu_L^{\psi}(\od e, \od u)$$
 we obtain 
\begin{align}\label{eq:mixture-term-convergence-2-limit}
	\sum_{i=1}^{\sigma^n_t} \bb| \bE[\hat h^{(k, l)}(\Delta_{n, i} \bmZ^n)|\cF_{n, i-1}] - (t^n_i - t^n_{i-1})\int_{\bR^{2D}} (h^{(k)}h^{(l)})(0, e, u) \nu_L^{\psi}(\od e, \od u)\bb| \xrightarrow{\bfL_1(\bP)} 0.
\end{align}
On the other hand, for $1\le k\vee l \le D^2 +D$, a direct calculation exploiting the independence and \eqref{property-eta} gives the following convergence as $n \to \infty$, particularly in $\bfL_1(\bP)$,
\begin{align*}
	\sum_{i=1}^{\sigma^n_t} \bE[q^{(k, l)}(\Delta_{n, i} \bmZ^n) |\cF_{n, i-1}] & = \sum_{i=1}^{\sigma^n_t} \bE[\Delta_{n, i} \bmZ^{n,(k)} \Delta_{n, i} \bmZ^{n,(l)} |\cF_{n, i-1}] \\
	& = \begin{cases}
		t^n_{\sigma^n_t} & \mbox{if } 1\le k = l \le D^2 +D\\
		0 & \mbox{otherwise}
	\end{cases} \to t C^{(k, l)}_1.
\end{align*}
Therefore, \eqref{mixture-term-convergence-2} follows from \eqref{eq:mixture-term-convergence-2-limit}, and the proof is completed.
\end{proof}

\subsection*{Proof of \Cref{thm:limit-distribution-integrator}} We combine \cite[Theorem VIII.2.29]{JS03} with \cref{limit-jump-part,lemm:convergence-drift-part,lemm:convergence-continuous-part} to obtain that
$(\bmZ^n_{t \wedge T})_{t \in [0, \infty)} \to \bmZ$ weakly in the Skorokhod topology on the space of c\`adl\`ag functions $ \colon [0, \infty) \to \bR^{\bfD}$. Here, $\bmZ$ is a semimartingale with the predictable characteristic\footnote{in the sense of \cite[Definition II.2.6]{JS03}.} $(B, C, m_{\bmZ})$ associated with the truncation function $h$, where
	\begin{itemize}
		\item $\bmZ_0 = 0$ as $\bmZ^n_0 = 0$ for all $n$;
		
		\item $h$ is taken as in the paragraph right before \cref{lemm:convergence-drift-part};
		
		\item $B$ is provided in \cref{lemm:convergence-drift-part};
		
		\item $C$ is defined in \eqref{eq:definition-coeff-matrix};
		
		\item $m_{\bmZ}(\od t, \od z) = \nu_{\bmZ}(\od z)  \Leb_{[0, T]}(\od t) $, where $\Leb_{[0, T]}$ is the restriction of the Lebesgue measure on $[0, T]$, $\nu_{\bmZ}$ is a L\'evy measure on $\bR^{\bfD}_0: = \bR^{\bfD}\backslash\{0\}$ with support on $\{0\}\times \bR^{2D}_0$, i.e. $\nu_{\bmZ}(\bR^{D^2+D}_0 \times \bR^{2D}_0) =0$, and such that $\nu_{\bmZ}(\{0\} \times B) = \nu_L^\psi(B)$ for $B \in \cB(\bR^{2D}_0)$.
	\end{itemize} 
	Note that $((W_{t \wedge T}, \cW_{t \wedge T}))_{t \in [0, \infty)}$ and $(L^\psi_{t \wedge T})_{t \in [0, \infty)}$ are independent due to \cref{lemm:independence-BM-Levy}. Then a standard calculation using L\'evy--Khintchine formula shows that $(\vect(W_{t \wedge T}, \cW_{t \wedge T}, L^\psi_{t \wedge T}))_{t \in [0, \infty)}$ is a (time-inhomogeneous) L\'evy process with characteristic triplet $(B, C, m_{\bmZ})$ with respect to the truncation function $h$. Hence, we derive from \cite[Theorem VIII.2.29]{JS03} the weak convergence $$(\bmZ^n_{t \wedge T})_{t \in [0, \infty)} \to (\vect(W_{t \wedge T}, \cW_{t \wedge T}, L^\psi_{t \wedge T}))_{t \in [0, \infty)}.$$ Eventually, since the limit process has no fixed time of discontinuity, we apply \cite[Theorem 16.7]{Bi99} to obtain the weak convergence on the time interval $[0, T]$ as desired. \qed

\begin{appendix}

\section{Proof of \cref{prop:decomposition-H}}\label{app:proof:prop:decomposition-H}
Condition $\bE\b[\int_{\bR^D} \|H_{n, i-1}(u)\|^2 \varphi_D(u) \od u\b] <\infty$ allows us to define
	\begin{align*}
		\mu^H_{n, i-1} &: = \int_{\bR^D} H_{n, i-1}(u) \varphi_D(u) \od u, \quad \wt H_{n, i-1}(u): = H_{n, i-1}(u) - \mu^H_{n, i-1}, \notag\\
		\Theta^H_{n, i-1} &: = \int_{\bR^D} \wt H_{n, i-1}(u)  \wt H_{n, i-1}(u)^\tran \varphi_D(u) \od u.
	\end{align*}
	Obviously $\mu^H_{n, i-1} \in \bfL_2(\bP)$. In addition, the finiteness of accumulative entropy implies that $\det(\Theta^H_{n, i-1})>0$ a.s. for all $n, i$.	Since $\Theta^H_{n, i-1} \in \bS^D_{++}$, we apply the spectral theorem for symmetric matrices to obtain a real diagonal matrix $\Lambda^H_{n, i-1} = \diag(\lambda_{1}(\Theta^H_{n, i-1}), \ldots, \lambda_{D}(\Theta^H_{n, i-1}))$ with $\lambda_{1}(\Theta^H_{n, i-1}) \ge \cdots \ge \lambda_{D}(\Theta^H_{n, i-1}) > 0$ and a $U^H_{n, i-1} \in \cO_D$, such that 
	\begin{align*}
		\Theta^H_{n, i-1} = U^H_{n, i-1} \Lambda^H_{n, i-1} (U^H_{n, i-1})^\tran.
	\end{align*}
	One remarks that $U^H_{n, i-1}$ and $\Lambda^H_{n, i-1}$ are matrices whose entries are $\cF_{n, i-1}$-measurable random variables. Now, by adjusting on a $\bP$-null set, we define
	\begin{align*}
		\vartheta^H_{n, i-1} & := (\Theta^H_{n, i-1})^{\frac{1}{2}} = U^H_{n, i-1} (\Lambda^H_{n, i-1})^{\frac{1}{2}} (U^H_{n, i-1})^\tran,\\
		\eta^H_{n, i} & : = U^H_{n, i-1} \hat \eta^H_{n, i}, \quad \mbox{where} \quad \hat \eta_{n, i}^{H, (d)} : = \frac{1}{\sqrt{\lambda_{d}  (\Theta^H_{n, i-1})}} (U^{H}_{n, i-1} \bme_d)^\tran \wt H_{n, i-1}(\xi_{n, i}),\quad  d = 1, \ldots, D.
	\end{align*}
	Then it is easy to check that $\vartheta^H_{n, i-1} \in \bfL_2(\bP)$. Moreover, for $d = 1, \ldots, D$, one has, a.s.,
	\begin{align*}
		[\vartheta^H_{n, i-1} \eta^H_{n, i}]^{(d)} & = \sum_{k=1}^D U^{H, (d, k)}_{n, i-1} \sqrt{\lambda_k(\Theta^H_{n, i-1})} \hat \eta^{H, (k)}_{n, i} = \sum_{k, l = 1}^D U^{H, (d, k)}_{n, i-1} U^{H, (l, k)}_{n, i-1} \wt H^{(l)}_{n, i-1}(\xi_{n, i}) \\
		& = \sum_{l=1}^D [U^H_{n, i-1} (U^H_{n, i-1})^\tran]^{(d, l)} \wt H^{(l)}_{n, i-1}(\xi_{n, i}) = \wt H^{(d)}_{n, i-1}(\xi_{n, i}),
	\end{align*}
	which shows $\vartheta^H_{n, i-1} \eta^H_{n, i} = \wt H_{n, i-1}(\xi_{n, i})$ a.s. For any $d = 1, \ldots, D$, we let $\hat \eta^{H, (d)}_{n, i}(\ep)$ be the random variable obtained by adding $\ep>0$ to $\lambda_d(\Theta^H_{n, i-1})$ in the definition of $\hat \eta^{H, (d)}_{n, i}$. Then one has, a.s., 
	\begin{align*}
		\bE\B[\b|\hat \eta^{H, (d)}_{n, i}(\ep)\b|^2\B| \cF_{n, i-1}\B] & = \frac{1}{\lambda_{d}(\Theta^H_{n, i-1})+ \ep}\bme_d^\tran (U^H_{n, i-1})^\tran \bE\B[\wt H_{n, i-1}(\xi_{n, i}) (\wt H_{n, i-1}(\xi_{n, i}))^\tran \B| \cF_{n, i-1}\B] U^H_{n, i-1} \bme_{d}\\
		& = \frac{1}{\lambda_{d}(\Theta^H_{n, i-1})+ \ep} \bme_d^\tran (U^H_{n, i-1})^\tran \Theta^H_{n, i-1}  U^H_{n, i-1} \bme_{d}  = \frac{\bme_d^\tran \Lambda^H_{n, i-1} \bme_{d}}{\lambda_{d}(\Theta^H_{n, i-1})+ \ep}  \\ 
		& = \frac{\lambda_{d}(\Theta^H_{n, i-1})}{\lambda_{d}(\Theta^H_{n, i-1})+ \ep}.
	\end{align*}
	Letting $\ep \downarrow 0$ yields  $\bE[|\hat \eta^{H, (d)}_{n, i}|^2 |\cF_{n, i-1}] = 1$ a.s. by the monotone convergence theorem, and thus, $\|\hat \eta^H_{n, i}\| \in \bfL_2(\bP)$ as a by-product. Analogously, we can show that $\bE[\hat \eta^{H, (d)}_{n, i} \hat \eta^{H, (d')}_{n, i} |\cF_{n, i-1}] = \1_{\{d = d'\}}$ a.s., which means that $\bE[\hat \eta^H_{n, i} (\hat \eta^H_{n, i})^\tran |\cF_{n, i-1}] = I_D$. Then we get
	$\bE[\eta^{H}_{n, i} (\eta^{H}_{n, i})^\tran | \cF_{n, i-1}] = I_D$ a.s., and hence, \eqref{eq:decomposition-H} follows. The uniqueness is straightforward.

\section{Some auxiliary results}

\subsection{Positive semidefinite matrices}
For matrices $A, B \in \bS^D$ we write $A \preceq B$ if $B - A \in \bS^D_+$.

\begin{lemm}[\cite{Ha74}, Sec.82, Exercises 12 and 13]\label{lem:matrix-inequality} \mbox{ }
	 \begin{enumerate}[\quad \rm(1)]
		\itemsep2pt
		\item For $A, B \in \bS^D_+$ with $A \preceq B$ one has $\det(A) \le \det(B)$.
		
		\item Let $A, B \in \bS^D_{++}$ with $A \preceq B$. Then $B^{-1} \preceq A^{-1}$ and $\Tr[A C] \le \Tr[BC]$ for any $C \in \bS^D_{+}$.
	\end{enumerate}
\end{lemm}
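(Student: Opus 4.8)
The plan is to prove the two items separately, reducing each to a statement about eigenvalues via a congruence transformation and the spectral theorem for symmetric matrices. Throughout I would use the elementary fact that congruence by an invertible matrix $P$ preserves the semidefinite order, i.e. $M \succeq 0$ if and only if $P^\tran M P \succeq 0$ (since $x \mapsto P x$ is a bijection of $\bR^D$ and $x^\tran (P^\tran M P) x = (P x)^\tran M (P x)$), so that $A \preceq B$ iff $P^\tran A P \preceq P^\tran B P$.

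For (1), the case $A \notin \bS^D_{++}$ is immediate: then $\det(A) = 0 \le \det(B)$ because $B \in \bS^D_+$. When $A \in \bS^D_{++}$, I would factor $B = A^{\frac{1}{2}}\big(I_D + A^{-\frac{1}{2}}(B-A) A^{-\frac{1}{2}}\big) A^{\frac{1}{2}}$, so that $\det(B) = \det(A)\,\det\big(I_D + A^{-\frac{1}{2}}(B-A) A^{-\frac{1}{2}}\big)$. The matrix $A^{-\frac{1}{2}}(B-A) A^{-\frac{1}{2}}$ is a congruence transform of $B - A \in \bS^D_+$, hence positive semidefinite; its eigenvalues are nonnegative, so the eigenvalues of $I_D + A^{-\frac{1}{2}}(B-A) A^{-\frac{1}{2}}$ are all at least $1$, whence $\det(B) \ge \det(A)$.

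For (2), I would conjugate $A \preceq B$ by $A^{-\frac{1}{2}}$ to obtain $I_D \preceq N$ with $N := A^{-\frac{1}{2}} B A^{-\frac{1}{2}} \in \bS^D_{++}$; the spectral theorem then gives that all eigenvalues of $N$ are $\ge 1$, so those of $N^{-1} = A^{\frac{1}{2}} B^{-1} A^{\frac{1}{2}}$ are $\le 1$, i.e. $A^{\frac{1}{2}} B^{-1} A^{\frac{1}{2}} \preceq I_D$, and conjugating once more by $A^{-\frac{1}{2}}$ yields $B^{-1} \preceq A^{-1}$. For the trace bound, by cyclicity of the trace,
$$\Tr[B C] - \Tr[A C] = \Tr[(B-A) C] = \Tr\big[C^{\frac{1}{2}}(B-A) C^{\frac{1}{2}}\big] \ge 0 ,$$
since $C^{\frac{1}{2}}(B-A) C^{\frac{1}{2}}$ is a congruence transform of $B - A \in \bS^D_+$ and the trace of a positive semidefinite matrix, being the sum of its nonnegative eigenvalues, is nonnegative.

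There is no genuine obstacle in this argument; the only points requiring a little care are the reduction to $A \in \bS^D_{++}$ in (1) and keeping track of which congruence is applied at each step. Alternatively, since these facts appear as exercises in \cite{Ha74}, one may simply cite that reference, but the proof above is short and self-contained.
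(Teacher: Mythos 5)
Your proof is correct. The paper itself supplies no argument for this lemma --- it is stated with a bare citation to Halmos (Sec.~82, Exercises 12 and 13) --- so there is no in-paper proof to compare against; your write-up simply fills in the standard derivation. All the steps check out: the factorization $B = A^{\frac{1}{2}}\bigl(I_D + A^{-\frac{1}{2}}(B-A)A^{-\frac{1}{2}}\bigr)A^{\frac{1}{2}}$ together with the eigenvalue bound for $I_D+M$ with $M\succeq 0$ gives (1) (and the degenerate case $\det(A)=0$ is handled correctly), the congruence by $A^{-\frac{1}{2}}$ reduces the operator-monotonicity of the inverse to the scalar statement on eigenvalues of $N=A^{-\frac{1}{2}}BA^{-\frac{1}{2}}$, and the trace inequality follows from $\Tr[C^{\frac{1}{2}}(B-A)C^{\frac{1}{2}}]\ge 0$. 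This is essentially the textbook route, so nothing further is needed.
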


\subsection{Integrability for solutions of SDEs with jumps}

Although the following fact can be easily extended to a multidimensional setting, however, we formulate it in the one-dimensional case for the sake of simplicity. We refer to \cite{BT23S} for its proof. 

\begin{lemm}\label{lem:solution-SDE}
	Let $\xi = (\xi_t)_{t \in [0, T]}$ be c\`adl\`ag and adapted with $\|\xi\|_{\cS_2([0, T])}^2: = \bE[\sup_{0 \le t \le T} \xi_t^2] <\infty$. Assume that $\od Z_t = \phi_t \od t  + \od K_t$, where $K = (K_t)_{t \in [0, T]}$ is a c\`adl\`ag $\bfL_2(\bP)$-martingale satisfying $\od \<K, K\>_t = \eta_t^2 \od t$, where $\eta$ and $\phi$ are progressively measurable with $\sup_{0 < t < T} \eta_t^2 + \int_0^T \phi_t^2 \od t \le C$ a.s. for some (non-random) constant $C>0$. Then, for a Lipschitz function $\sigma\colon \bR \to \bR$, the SDE
	\begin{align*}
		X_t = \xi_t + \int_0^t \sigma(X_{u-}) \od Z_u, \quad X_0 = \xi_0 = x_0 \in \bR,
	\end{align*}
	has a unique c\`adl\`ag strong solution $X = (X_t)_{t \in [0, T]}$ satisfying  $\bE[\sup_{0 \le t \le T} X_t^2] \le C' <\infty$ for some constant $C' = C'(\|\xi\|_{\cS_2([0, T])}, T, \sigma, C)>0$.
\end{lemm}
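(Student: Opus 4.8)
The plan is the classical Picard iteration for stochastic differential equations with semimartingale drivers, carried out in the space $\cS_2=\cS_2([0,T])$ of càdlàg adapted processes equipped with the norm $\|Y\|_{\cS_2}:=\bE[\sup_{t\le T}Y_t^2]^{1/2}$, which is complete. First I would record the structural consequences of the hypotheses. Since $\int_0^T\phi_u^2\od u\le C$ a.s., Cauchy--Schwarz gives $\int_0^T|\phi_u|\od u\le\sqrt{CT}$ a.s., so the finite-variation part of $Z$ has a.s.\ bounded total variation and $Z$ is a semimartingale for which $\int_0^t\sigma(X_{u-})\od Z_u=\int_0^t\sigma(X_{u-})\phi_u\od u+\int_0^t\sigma(X_{u-})\od K_u$ is well defined for every càdlàg adapted $X$; moreover $\langle K,K\rangle_T=\int_0^T\eta_u^2\od u\le CT$ a.s. Writing $L$ for the Lipschitz constant of $\sigma$ and using $\sigma(x)^2\le2\sigma(0)^2+2L^2x^2$, the two estimates I will invoke repeatedly are, for predictable $H$ with $\bE[\int_0^T H_u^2\od u]<\infty$,
\begin{align*}
\bE\Big[\sup_{t\le T}\Big|\int_0^t H_u\od K_u\Big|^2\Big]\le c\,\bE\Big[\int_0^T H_u^2\eta_u^2\od u\Big]\le cC\,\bE\Big[\int_0^T H_u^2\od u\Big]
\end{align*}
(Burkholder--Davis--Gundy together with $\od\langle K,K\rangle_u=\eta_u^2\od u$ and $\eta_u^2\le C$), and $\sup_{t\le T}|\int_0^t H_u\phi_u\od u|^2\le C\int_0^T H_u^2\od u$ a.s.\ (Cauchy--Schwarz).

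Next I would introduce the map $\Phi$ on $\cS_2$ defined by $(\Phi Y)_t:=\xi_t+\int_0^t\sigma(Y_{u-})\phi_u\od u+\int_0^t\sigma(Y_{u-})\od K_u$. Combining the two estimates above with the linear growth bound $\sigma(x)^2\le2\sigma(0)^2+2L^2x^2$ shows $\Phi Y\in\cS_2$ whenever $Y\in\cS_2$ (here $\|\xi\|_{\cS_2}<\infty$ is used), and applying them to $\sigma(Y_{u-})-\sigma(Y'_{u-})$ together with the Lipschitz property yields a constant $\Gamma=\Gamma(C,T,L)$ such that
\begin{align*}
\bE\Big[\sup_{s\le t}\big|(\Phi Y)_s-(\Phi Y')_s\big|^2\Big]\le\Gamma\int_0^t\bE\Big[\sup_{r\le u}|Y_r-Y'_r|^2\Big]\od u,\qquad t\in[0,T].
\end{align*}
Starting from $X^{(0)}\equiv\xi$ and setting $X^{(n+1)}:=\Phi X^{(n)}$, induction gives $\bE[\sup_{s\le t}|X^{(n+1)}_s-X^{(n)}_s|^2]\le\|X^{(1)}-X^{(0)}\|_{\cS_2}^2(\Gamma t)^n/n!$, so $(X^{(n)})_n$ is Cauchy in $\cS_2$; its limit $X$ satisfies $X=\Phi X$ because the same estimate makes $\Phi$ continuous on $\cS_2$, i.e.\ $X$ is a càdlàg strong solution.

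For uniqueness, if $X,X'$ both solve the equation and both belong to $\cS_2$, the same estimate gives $\delta(t):=\bE[\sup_{s\le t}|X_s-X'_s|^2]\le\Gamma\int_0^t\delta(u)\od u$ with $\delta(T)<\infty$, so Gronwall's lemma forces $\delta\equiv0$. It remains to prove the a priori bound for an arbitrary solution $X$ (a priori known only to be càdlàg and adapted); this simultaneously shows that every solution lies in $\cS_2$ and hence completes the uniqueness argument. Here I would localize by $\tau_m:=\inf\{t:|X_t|\ge m\}\wedge T$: on $[0,\tau_m]$ the process $X$ is bounded, the stopped stochastic integral is a genuine $\bfL_2(\bP)$-martingale, and with $h_m(t):=\bE[\sup_{s\le t\wedge\tau_m}X_s^2]$ the two basic estimates plus $\sigma(x)^2\le2\sigma(0)^2+2L^2x^2$ give $h_m(t)\le B_1+B_2\int_0^t h_m(u)\od u$ with $B_1=B_1(\|\xi\|_{\cS_2},T,\sigma(0),C)$ and $B_2=B_2(C,L)$ independent of $m$, whence $h_m(T)\le B_1e^{B_2T}$ by Gronwall. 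Since $X$ is càdlàg it is bounded on $[0,T]$, so $\tau_m\uparrow T$ a.s., and monotone convergence yields $\bE[\sup_{t\le T}X_t^2]\le B_1e^{B_2T}=:C'$, which has the asserted dependence on $\|\xi\|_{\cS_2},T,\sigma,C$.

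The computations are routine Gronwall/BDG bookkeeping; the only points needing genuine care are the passage between the optional quadratic variation $[K,K]$ (appearing in the BDG inequality) and the predictable one $\langle K,K\rangle=\int_0^\cdot\eta_u^2\od u$ (which carries the boundedness hypothesis), and the localization needed to turn the stochastic integrals into honest $\bfL_2(\bP)$-martingales before taking expectations. The hypotheses $\int_0^T\phi_u^2\od u\le C$ and $\eta_u^2\le C$ are exactly what turns the Cauchy--Schwarz bound for the drift and the BDG bound for the martingale into time-integrals of $\bE[\sup_{r\le u}|\cdot|^2]$, which is precisely the form required for Gronwall's lemma.
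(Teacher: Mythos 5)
Your argument is correct: the Picard--Gronwall scheme in $\cS_2([0,T])$, with Doob/BDG plus $\od\langle K,K\rangle_t=\eta_t^2\od t\le C\od t$ for the martingale part, Cauchy--Schwarz with $\int_0^T\phi_t^2\od t\le C$ for the drift, and the stopping-time localization to obtain the a priori bound for an arbitrary càdlàg solution, is exactly the standard route, and you correctly flag and handle the only two delicate points (optional versus predictable bracket, and integrability before taking expectations). The paper itself defers the proof of this lemma to the supplementary material \cite{BT23S}, so there is no in-text proof to compare against, but your argument is the expected one and is complete.
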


\subsection{Independence of Gaussian and purely non-Gaussian L\'evy processes}
L\'evy processes in the following assertion are considered with the canonical truncation function $h(x) = x\1_{\{\|x\|\le 1\}}$. We refer to \cite{BT23S} for its proof.

\begin{lemm}\label{lemm:independence-BM-Levy}
Let $D, D' \in \bN$. Assume that $W$ is a $D$-dimensional Gaussian L\'evy process and $L$ is a $D'$-dimensional purely non-Gaussian L\'evy process, both defined on the same probability space. Then $W$ and $L$ are independent.
\end{lemm}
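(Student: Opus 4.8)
The plan is to derive the independence from a factorization of characteristic functions via the L\'evy--Khintchine formula, exploiting that $W$ and $L$ are \emph{jointly} a L\'evy process. Indeed, in the situation where this lemma is applied --- to $\bmZ=\vect(W,\cW,L^\psi)$ at the end of the proof of \cref{thm:limit-distribution-integrator} --- the candidate limit arises as a semimartingale with deterministic, time-homogeneous characteristics, so the pair $Z:=(W,L)$ in $\bR^{D+D'}$ is itself a L\'evy process, with $W$ its Gaussian block and $L$ its purely non-Gaussian block. Some joint hypothesis of this kind is indispensable: for instance, the right-continuous inverse of the running maximum of a one-dimensional Brownian motion is a stable-$\tfrac12$ subordinator, hence a purely non-Gaussian L\'evy process which is a deterministic functional of the Brownian motion. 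Accordingly, I would begin by fixing the characteristic triplet $(b,\bfA,\nu_Z)$ of $Z$ with respect to the canonical truncation function on $\bR^{D+D'}$.

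The heart of the argument is to show that this triplet splits along the two blocks. Write $\bfA=\begin{pmatrix}A & \Gamma\\ \Gamma^\tran & A'\end{pmatrix}$ with $A$ of size $D$ and $A'$ of size $D'$. Since the marginal $L$ is a L\'evy process whose Gaussian covariance is the lower-right block $A'$ and $L$ is purely non-Gaussian, $A'=0$; because $\bfA\in\bS^{D+D'}_+$, evaluating $(\varepsilon w,\,l)^\tran\bfA\,(\varepsilon w,\,l)\ge 0$ for all $\varepsilon\in\bR$ forces $\Gamma=0$, so $\bfA=\diag(A,0)$ with $A$ the Gaussian covariance of $W$. Likewise, by the standard description of marginals of L\'evy processes, the L\'evy measure of $W$ is the image of $\nu_Z$ under the projection onto the first $D$ coordinates, restricted off the origin; as $W$ is Gaussian L\'evy this image is the zero measure, which forces $\nu_Z$ to be concentrated on $\{0\}\times(\bR^{D'}\setminus\{0\})$, where it coincides with the L\'evy measure $\nu_L$ of $L$. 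This step --- relating the joint triplet to the marginal triplets, in particular the positive-semidefiniteness argument that annihilates the off-diagonal Gaussian block and the projection of $\nu_Z$ onto the marginals --- is the one I expect to require the most care; the remainder is routine bookkeeping.

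Inserting the block-diagonal $\bfA$ and the slice-supported $\nu_Z$ into the L\'evy--Khintchine exponent of $Z$ then gives $\kappa_Z(u,v)=\kappa_W(u)+\kappa_L(v)$ for $u\in\bR^D$, $v\in\bR^{D'}$ (the truncation functions match on the support of $\nu_Z$, where $\|(0,l)\|=\|l\|$), whence $\bE[\e^{\im(u^\tran W_t+v^\tran L_t)}]=\bE[\e^{\im u^\tran W_t}]\,\bE[\e^{\im v^\tran L_t}]$ and $W_t\perp L_t$ for every $t$. Since $Z$ has independent increments and $Z_t-Z_s$ has the same law as $Z_{t-s}$, the same factorization applied to increments gives $W_t-W_s\perp L_t-L_s$; combined with the independence of the increments of $Z$ along a partition $0=t_0<\dots<t_n=T$, the $2n$ random vectors $\{W_{t_i}-W_{t_{i-1}}\}\cup\{L_{t_i}-L_{t_{i-1}}\}$ are mutually independent, hence $\sigma(W_{t_1},\dots,W_{t_n})\perp\sigma(L_{t_1},\dots,L_{t_n})$. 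Letting the partition run through a countable dense subset of $[0,T]$ together with $T$ and using the c\`adl\`ag paths, a monotone-class argument upgrades this to $\sigma(W)\perp\sigma(L)$, which is the claim.
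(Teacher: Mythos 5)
Your proof is correct under the hypothesis you add, and your preliminary observation is the most valuable part of the submission: as literally stated (``both defined on the same probability space''), the lemma is false, and your counterexample is valid --- the first-passage process $a \mapsto \inf\{s : B_s > a\}$ of a Brownian motion $B$ is a stable-$\frac{1}{2}$ subordinator, hence a purely non-Gaussian L\'evy process that is a measurable functional of $B$ and clearly not independent of it. Some joint hypothesis is indispensable: either that $(W,L)$ is a L\'evy process in $\bR^{D+D'}$, as you assume, or that $W$ and $L$ are L\'evy processes with respect to one common filtration. The paper relegates its own proof to the supplement \cite{BT23S}, so a direct comparison is not possible from this document; but note that where the lemma is invoked, in the proof of \cref{thm:limit-distribution-integrator}, the triple $(W,\cW,L^\psi)$ is likewise only assumed to live on a common probability space, so the same caveat applies there. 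The cleanest repair is to run the independence argument on the limiting semimartingale $\bmZ$ itself, whose deterministic characteristics make it jointly L\'evy --- exactly the situation your proof covers.

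Granting the joint L\'evy hypothesis, every step you outline is sound: the lower-right Gaussian block vanishes because the marginal $L$ is purely non-Gaussian; positive semidefiniteness then annihilates the off-diagonal block; the standard fact that the L\'evy measure of a linear image of a L\'evy process is the pushforward restricted off the origin forces $\nu_Z$ onto $\{0\}\times(\bR^{D'}\setminus\{0\})$; the exponent splits because the canonical truncation restricted to that slice agrees with the one on $\bR^{D'}$; and the upgrade from independence of single increments to independence of $\sigma(W)$ and $\sigma(L)$ via independent increments and a monotone-class argument is routine. For comparison, the classical route under the common-filtration hypothesis is different in flavour: one checks that the complex exponential martingales $\e^{\im u^\tran W_t + t\kappa_W(u)}$ and $\e^{\im v^\tran L_t + t\kappa_L(v)}$ have zero quadratic covariation (no common jumps since $W$ is continuous, no common Brownian part since $L$ has none), so their product is a martingale and the joint characteristic function of increments factors. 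That argument is the one to use when only a common filtration is available; your triplet-splitting argument is the natural one when, as for the limit process here, the pair is genuinely a L\'evy process.
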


\end{appendix}


\bibliographystyle{amsplain}

\end{document}